\newtheorem{theorem}{Theorem}[section]
\newtheorem{remark}[theorem]{Remark}
\newtheorem{proposition}[theorem]{Proposition}
\newtheorem{lemma}[theorem]{Lemma}
\newtheorem{corollary}[theorem]{Corollary}
\numberwithin{equation}{section}
\renewcommand*\env@matrix[1][\arraystretch]{%
  \edef\arraystretch{#1}%
  \hskip -\arraycolsep
  \let\@ifnextchar\new@ifnextchar
  \array{*\c@MaxMatrixCols c}}
\DeclarePairedDelimiter\abs{\lvert}{\rvert}
\DeclarePairedDelimiter\norm{\lVert}{\rVert}
\begin{document}
 
\title[Solvability for the linearized Ginzburg-Landau equation]{Solvability for the Ginzburg-Landau equation linearized at the degree-one vortex}

\author[M.~del Pino]{Manuel del Pino}
\address{\noindent M.~del Pino: Department of Mathematical Sciences University of Bath, Bath BA2 7AY, United Kingdom}
\email{m.delpino@bath.ac.uk}

\author[R.~Juneman]{Rowan Juneman}
\address{\noindent R.~Juneman: Department of Mathematical Sciences University of Bath, Bath BA2 7AY, United Kingdom}
\email{rj493@bath.ac.uk}

\author[M.~Musso]{Monica Musso}
\address{\noindent M.~Musso: Department of Mathematical Sciences University of Bath, Bath BA2 7AY, United Kingdom.}
\email{m.musso@bath.ac.uk}

\begin{abstract}
We consider the Ginzburg-Landau equation in the plane linearized around the standard degree-one vortex solution \(W(x)=w(r)e^{i\theta}\). Using explicit representation formulae for the Fourier modes in \(\theta\), we obtain sharp estimates for the inverse of the linearized operator which hold for a large class of right-hand sides. This theory can be applied, for example, to estimate the inverse after dropping the usual orthogonality conditions. 
\end{abstract}

\maketitle

\section{Introduction}

The complex-valued Ginzburg-Landau equation in the plane
\begin{equation}
\label{GLeq}
    \Delta u+(1-\abs{u}^{2})u=0,\quad\text{in }\mathbb{R}^{2},
\end{equation}admits a well-known, symmetric degree-one vortex solution of the form 
\begin{equation}
W(x)=w(r)e^{i\theta},
\end{equation}
where \(x_{1}=r\cos(\theta)\) and \(x_{2}=r\sin(\theta)\) denote the usual polar coordinates in \(\mathbb{R}^{2}\), and \(w=w(r)\) denotes the unique solution of the problem
\begin{equation}
\label{wODE}
\begin{gathered}
w''+\frac{1}{r}w'-\frac{1}{r^{2}}w+(1-w^2)w=0,\quad\text{in }(0,\infty),\\
    w(0)=0,\quad w(r)\to1\text{ as }r\to\infty.
\end{gathered}    
\end{equation}Referring to \cites{chenelliottqi1994,herveherve1994} for the key properties of \(w(r)\), it is known that \(0<w(r)<1\), \(w'(r)>0\) for all \(r>0\), and \(w(r)\) has the asymptotic behaviour
\begin{align}
\begin{split}
    w(r)&=\alpha r+O(r^{3})\text{ as }r\to0^{+},\text{ for some }\alpha>0,\label{wproperties}\\
    w(r)&=1-1/(2r^2)+O(1/r^4)\text{ as }r\to\infty,\\
    w'(r)&=1/r^{3}+O(1/r^{5})\text{ as }r\to\infty.
\end{split}    
\end{align}

As shown by Mironescu \cite{mironescu1996}, the symmetric degree-one vortex \(W(x)\) plays a distinguished role in the classification of solutions to (\ref{GLeq}), in the sense that any solution of (\ref{GLeq}) satisfying \(\int_{\mathbb{R}^{2}}(1-\abs{u}^{2})^{2}\:\:dx<\infty\) and having degree one at infinity must necessarily be equal to \(e^{i\gamma}W(x-x_{0})\) for some \(\gamma\in\mathbb{R}\), \(x_{0}\in\mathbb{R}^{2}\). In view of this fact, there has long been interest in understanding the stability and nondegeneracy of the degree-one vortex, and in developing a suitable solvability theory for the linearized operator
\begin{equation}
\label{linearizedop}
L[\phi]:=\Delta\phi+(1-\abs{W}^{2})\phi-2\operatorname{Re}(\overline{W}\phi)W
\end{equation}of (\ref{GLeq}) around \(W\). 

Associated to (\ref{linearizedop}) are three elements of the kernel 
\begin{equation}
L\left[iW\right]=L\left[\frac{\partial W}{\partial x_{1}}\right]=L\left[\frac{\partial W}{\partial x_{2}}\right]=0,
\end{equation}reflecting the invariance of equation (\ref{GLeq}) under the symmetry groups of translations and phase shifts. These functions are known to span the entire kernel in \(L^{\infty}\), see \cite{pacardriviere2000}*{Theorem 3.2}.

Another important property of (\ref{linearizedop}) is the non-negativity of the corresponding quadratic form 
\begin{equation}
\label{quadform}
B(\phi,\phi):=\int_{\mathbb{R}^{2}}\abs{\nabla\phi}^{2}-(1-\abs{W}^{2})\abs{\phi}^{2}+2(\operatorname{Re}(\overline{W}\phi))^{2}\:\:dx,    
\end{equation}a fact established in \cite{delpinofelmerkowalczyk2004} for perturbations \(\phi\) belonging to the natural Hilbert space
\begin{equation}
    H:=\left\{\phi\in H^{1}_{\text{loc}}(\mathbb{R}^{2}):\norm{\phi}_{H}^{2}:=\int_{\mathbb{R}^{2}}\abs{\nabla\phi}^{2}+(1-\abs{W}^{2})\abs{\phi}^{2}+(\operatorname{Re}(\overline{W}\phi))^{2}\:\:dx<\infty\right\}.
\end{equation}In precise terms, it is shown in \cite{delpinofelmerkowalczyk2004} that \(B(\phi,\phi)\geq0\) for all \(\phi\in H\), with \(B(\phi,\phi)=0\) in \(H\) if and only if
\begin{equation}
\label{philinearcomb}
    \phi=c_{1}\frac{\partial W}{\partial x_{1}}+c_{2}\frac{\partial W}{\partial x_{2}},\quad\text{for some}\quad c_{1},c_{2}\in\mathbb{R}, 
\end{equation}the possibility \(\phi=iW\) being excluded in this case as \(iW\notin H\). See also \cites{mironescu1995,mironescu1996,ovchinnikovsigal1997} for related results in \(L^{2}\) and for compactly supported perturbations.

A key corollary of \(B(\phi,\phi)\geq0\) is a Fredholm alternative \cite{delpinofelmerkowalczyk2004} for the problem
\begin{equation}
\label{linearizedeq}
    L[\phi]=h,\quad\text{in }\mathbb{R}^{2},
\end{equation}which asserts the existence of solutions to (\ref{linearizedeq}) for right-hand sides \(h\) satisfying 
\begin{equation}
\label{hintegrability}
    \int_{\mathbb{R}^{2}}\abs{h}^{2}(1+r^{2+\sigma})\:\:dx<\infty,\quad\text{for some }\sigma>0,
\end{equation}and the orthogonality conditions
\begin{equation}
\label{orthogconds}
    \langle h,iW \rangle=\left\langle h,\frac{\partial W}{\partial x_{1}} \right\rangle=\left\langle h,\frac{\partial W}{\partial x_{2}} \right\rangle=0,\quad\quad\langle u,v \rangle:=\operatorname{Re}\int_{\mathbb{R}^{2}}u\overline{v}\:\:dx.
\end{equation}Whilst this result provides a natural application of the preceding theory, it is relevant to note that assumption (\ref{hintegrability}) is rather restrictive in practise. This integrability condition does not hold, for example, for any of the three generators \(iW,\:\partial W/\partial x_{1},\:\partial W/\partial x_{2}\). Moreover, the proof of the Fredholm alternative, based on variational methods, does not provide any information on the precise decay rate of the solution.

The aim of this paper is to revisit the invertibility theory for (\ref{linearizedeq}) using a different approach, which is not limited by assumptions (\ref{hintegrability}) and (\ref{orthogconds}). 
Our motivation for returning to this problem comes partly from our study of the Gross-Pitaevskii equation
\begin{equation}
\label{gpeq}
    iu_{t}+\Delta u+(1-\abs{u}^{2})u=0,\quad(x,t)\in\mathbb{R}^{2}\times[0,T],
\end{equation}whose analysis using singular perturbation techniques requires one to deal with (\ref{linearizedeq}) where \(h\) is merely bounded, or worse, where \(h\) grows as \(r\to\infty\). In these situations the variational method cannot be applied, and one is forced to drop one or more of the orthogonality conditions (\ref{orthogconds}) as the relevant products are not integrable in the whole of \(\mathbb{R}^{2}\).

An equivalent form of equation (\ref{linearizedeq}), which is often easier to study, can be expressed in terms of the function \(\psi\) defined via \(\phi=iW\psi\). Writing \(\psi=\psi_{1}+i\psi_{2}\) for the real and imaginary parts of \(\psi\), and \(h=iW\big(h_{1}+ih_{2}\big)\), problem (\ref{linearizedeq}) reads
\begin{equation}
\label{linearizedeqpsi}
\Delta\psi+2\frac{w'(r)}{w(r)}\partial_{r}\psi+\frac{2i}{r^{2}}\partial_{\theta}\psi-2iw(r)^{2}\psi_{2}=h_{1}+ih_{2}.
\end{equation}
In the construction of multi-vortex solutions to (\ref{gpeq}), linearization around a first approximation built from a product of vortices naturally leads one to (\ref{linearizedeqpsi}) with right-hand sides of the form \(h_{1}=O(r^{-2})\), \(h_{2}=O(1)\) as \(r\to\infty\), and with \(h=iW\big(h_{1}+ih_{2}\big)\) bounded on compact sets. We intend to study this scenario here as a model for more general problems. Far from the origin the operator on the left-hand side of (\ref{linearizedeqpsi}) behaves roughly like \(\Delta\) in the real part, and like \(\Delta-2\) in the imaginary part. Accordingly, we expect the existence of a solution with \(\psi_{1}=O((\log r)^{2})\), \(\psi_{2}=O(1)\) as \(r\to\infty\). Consistent with these observations, it is convenient to define the norms
\begin{align}
    \norm{\phi}_{*}&:=\sup_{r\leq2}\:\abs{\psi}+\sup_{r\geq2}\:\abs{(\log r)^{-2}\psi_{1}}+\sup_{r\geq2}\:\abs{\psi_{2}},\label{normphi}\\
    \norm{h}_{**}&:=\sup_{r\leq2}\:\abs{h}+\sup_{r\geq2}\:\abs{r^{2}h_{1}}+\sup_{r\geq2}\:\abs{h_{2}},\label{normh}
\end{align}where \(\phi=iW\big(\psi_{1}+i\psi_{2}\big)\) and \(h=iW\big(h_{1}+ih_{2}\big)\). (Note that, since \(\abs{W}=w\sim r\) as \(r\to0^{+}\), a function \(h\) satisfying \(\norm{h}_{**}<\infty\) will typically have \(h_{1},h_{2}=O(r^{-1})\) as \(r\to0^{+}\)). 

Our main result reads as follows.

\begin{theorem}
\label{mainthm}
There exists a constant \(C>0\) such that for any \(h\) satisfying \(\norm{h}_{**}<\infty\) and the orthogonality conditions \(\left\langle h,\frac{\partial W}{\partial x_{1}} \right\rangle=\left\langle h,\frac{\partial W}{\partial x_{2}} \right\rangle=0\), equation (\ref{linearizedeq}) has a solution \(\phi\) such that
\begin{equation}
\label{mainestimate}
    \norm{\phi}_{*}\leq C\norm{h}_{**}.
\end{equation}
\end{theorem}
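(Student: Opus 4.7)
The plan is to pass from (\ref{linearizedeq}) to the equivalent form (\ref{linearizedeqpsi}) via \(\phi = iW\psi\), and to expand both \(\psi\) and the forcing in Fourier series in \(\theta\),
\[
\psi_{1}+i\psi_{2} = \sum_{n\in\mathbb{Z}} \psi^{(n)}(r)\, e^{in\theta}, \qquad h_{1}+ih_{2} = \sum_{n\in\mathbb{Z}} H^{(n)}(r)\, e^{in\theta}.
\]
After separating real and imaginary parts, each mode reduces to a \(2\times 2\) linear ODE system on \((0,\infty)\) with principal part \(u'' + (1/r + 2w'/w)u' - (n^{2}/r^{2})u\), a zeroth-order coupling of size \(|n|/r^{2}\) between the two components, and a mass term \(-2w^{2}\) appearing only in the imaginary equation. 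I would solve each modal system by explicit variation of parameters around a carefully chosen pair of fundamental solutions, selecting the branches that are regular at the origin and have the correct growth at infinity, and then reassemble the \(\psi^{(n)}\) into \(\psi\).

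The mode \(n=0\) decouples further: \(\psi_{1}^{(0)}\) satisfies \((rw^{2}(\psi_{1}^{(0)})')' = rw^{2}h_{1}^{(0)}\), which is integrated directly, and \(\psi_{2}^{(0)}\) satisfies a Schr\"odinger-like ODE behaving like \(\Delta-2\) at infinity that admits a bounded decaying solution by barrier arguments. Using \(|h_{1}^{(0)}(r)| \le C\|h\|_{**}/r^{2}\) at infinity, two successive integrations produce exactly \(\psi_{1}^{(0)}(r) = O((\log r)^{2})\); no orthogonality condition is needed at this mode, because the norm \(\|\cdot\|_{*}\) already absorbs the constant kernel \(\psi\equiv 1\) (that is, \(iW\)). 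For the modes \(n = \pm 1\), a direct computation in polar form shows \(\partial_{x_{j}}W = iW\,\psi_{T_{j}}\) with \(\psi_{T_{j}}\) entirely supported in Fourier modes \(\pm 1\), so the orthogonality hypotheses \(\langle h, \partial W/\partial x_{j}\rangle = 0\) translate into precisely the two integral solvability conditions required for the variation-of-parameters formula against these bounded kernels to yield a bounded solution. For the modes \(|n|\ge 2\) the centrifugal term \(n^{2}/r^{2}\) removes any bounded kernel, and the Green's function of each modal problem can be constructed uniformly in \(n\).

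I expect two main difficulties. First, mode \(n=0\) is borderline: the \((\log r)^{2}\) weight in \(\|\cdot\|_{*}\) is sharp, and the antiderivative matching the bounded-at-zero branch must be selected with care. Second, and more fundamentally, pointwise modal estimates \(|\psi^{(n)}(r)| \le C_{n}\|H^{(n)}\|\) do not recombine automatically into a pointwise bound on \(\psi\), since Fourier series need not converge in \(L^{\infty}(\theta)\). To circumvent this I would assemble the modal Green's functions into a single integral kernel acting on \(h\), producing a Poisson-type representation of the solution, and then estimate that kernel directly in the weights of \(\|\cdot\|_{*}\) and \(\|\cdot\|_{**}\). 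Establishing the uniform-in-\(n\) control of the modal Green's functions, both near \(r=0\) where the centrifugal potential \(n^{2}/r^{2}\) dominates and near \(r=\infty\) where the exponentially decaying solutions of the \(\psi_{2}\) component must be coordinated with the algebraic behaviour of the \(\psi_{1}\) component, is where I expect the real technical work.
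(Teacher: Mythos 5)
Your modal reduction, the treatment of mode $n=0$, and the role of the orthogonality conditions at modes $n=\pm1$ all match the paper's strategy. But there is a genuine gap at the step you yourself flag as ``the real technical work'': the uniform-in-$n$ control needed to sum the modes. You assert that ``the Green's function of each modal problem can be constructed uniformly in $n$,'' but this is precisely what does not come for free from the per-mode variation-of-parameters construction. Indeed, the paper's own modal estimates for $k\geq2$ carry constants $C_{k}$ with no claimed uniformity, and the paper does \emph{not} obtain Theorem \ref{mainthm} by estimating an assembled kernel. Instead it devotes a separate section to a compactness argument: maximum-principle estimates for the $\psi_{1}$ and $\psi_{2}$ equations on annuli $B_{R}\setminus B_{\rho}$ (treating the $\partial_{\theta}$ coupling terms perturbatively, Lemmas \ref{psi1lemma}--\ref{psi2lemmaext} and Proposition \ref{Lpsiouterest}), followed by a blow-up/contradiction argument for the Dirichlet problem on $B_{R}$ (Proposition \ref{Lestboundeddom}) that yields a bound independent of $R$ and of the number of modes, using the nonexistence of bounded kernel elements in modes $k\geq3$ both for $L$ and for the rescaled limit equation at the origin. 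Your alternative plan of writing a single Poisson-type kernel and estimating it in the weighted norms is not implausible, but it is not executed, and without either that computation or a compactness substitute the passage from $\abs{\psi^{(n)}}\leq C_{n}\norm{H^{(n)}}$ to $\norm{\phi}_{*}\leq C\norm{h}_{**}$ is unproved.

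A secondary issue: for $n\neq0$ each modal problem is a coupled $2\times2$ second-order system, so its solution space is four-dimensional, not spanned by ``a pair of fundamental solutions.'' The classification of the four kernel elements (matching their behaviour at $0$ and at $\infty$ requires a nondegeneracy input plus a sign-preservation/maximum-principle lemma) and, above all, the identification of the correct pairing in the variation-of-parameters formula — the Cramer determinants turn out to be proportional to $w(s)^{2}s\,\widetilde{h}(s)\cdot z_{j,k}(s)$ for the \emph{complementary} kernel element $z_{j,k}$, which the paper proves by recognizing each cofactor as itself a solution of the homogeneous system and identifying it by its asymptotics — is the paper's main technical contribution (Propositions \ref{mode1kernel} and \ref{mode1repformula}). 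Your proposal treats this as routine; as written it would not produce the explicit representation formula on which the weighted estimates, and the precise use of the orthogonality conditions, depend.
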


\begin{remark}
\label{noorthogconds}
In the above result, no reference is made to the orthogonality condition \(\langle h,iW \rangle=0\) as \(h\overline{iW}\) is not necessarily integrable in \(\mathbb{R}^{2}\). If \(h\) does not satisfy any of the orthogonality conditions (\ref{orthogconds}), our method shows that a solution of (\ref{linearizedeq}) still exists with the weaker estimate
\begin{equation}
    \abs{\psi_{1}}\leq Cr\norm{h}_{**}, \quad\quad\abs{\psi_{2}}\leq C\norm{h}_{**},
\end{equation}    
for \(r\geq2\).
\end{remark}

The proof of Theorem \ref{mainthm} relies crucially on a delicate ODE analysis related to the Fourier decomposition of \(\psi\) in \(\theta\). Following \cite{delpinofelmerkowalczyk2004}, we decompose \(\psi\) as
\begin{equation}
\label{psifourier}
    \psi=\psi^{0}+\sum_{k=1}^{\infty}\psi_{k}^{1}+\sum_{k=1}^{\infty}\psi_{k}^{2},
\end{equation}where
\begin{align}
    \begin{split}
    \psi^{0}&=\psi_{1}^{0}(r)+i\psi_{2}^{0}(r),\\
    \psi_{k}^{1}&=\psi_{k1}^{1}(r)\cos{k\theta}+i\psi_{k2}^{1}(r)\sin{k\theta},\\
    \psi_{k}^{2}&=\psi_{k1}^{2}(r)\sin{k\theta}+i\psi_{k2}^{2}(r)\cos{k\theta}.
    \end{split}
\end{align}In a similar spirit, we decompose \(h\) as 
\begin{equation}
\label{hfourier}
    h=iW\left(h^{0}+\sum_{k=1}^{\infty}h_{k}^{1}+\sum_{k=1}^{\infty}h_{k}^{2}\right),
\end{equation}where
\begin{align}
    \begin{split}
    h^{0}&=h_{1}^{0}(r)+ih_{2}^{0}(r),\\
    h_{k}^{1}&=h_{k1}^{1}(r)\cos{k\theta}+ih_{k2}^{1}(r)\sin{k\theta},\\
    h_{k}^{2}&=h_{k1}^{2}(r)\sin{k\theta}+ih_{k2}^{2}(r)\cos{k\theta}.
    \end{split}
\end{align}By direct calculation, we see that equation (\ref{linearizedeqpsi}) reduces to the ODEs
\begin{equation}
\label{fmodesystem}
\varphi''+\left(2\frac{w'}{w}+\frac{1}{r}\right)\varphi'-\frac{1}{r^{2}}\begin{pmatrix}
        k^{2} & (-1)^{l+1}2k \\
        (-1)^{l+1}2k & k^{2}+2w^{2}r^{2} 
    \end{pmatrix}\varphi=\widetilde{h}
\end{equation}for \(k\geq1\) and \(l\in\{1,2\}\), where 
\begin{equation}
\label{fmodesystemvector}
    \varphi=\begin{pmatrix}[1.3]
        \psi_{k1}^{l}\\
        \psi_{k2}^{l}
    \end{pmatrix},\quad\quad\widetilde{h}= 
    \begin{pmatrix}[1.3]
        h_{k1}^{l}\\
        h_{k2}^{l}
    \end{pmatrix}.
\end{equation}For Fourier mode \(k=0\), we get the uncoupled equations
\begin{gather}
    \psi_{1}^{0\:''}+\left(2\frac{w'}{w}+\frac{1}{r}\right)\psi_{1}^{0\:'}=h_{1}^{0},\label{fmodeuncoupled1}\\
   \psi_{2}^{0\:''}+\left(2\frac{w'}{w}+\frac{1}{r}\right)\psi_{2}^{0\:'}-2w^{2}\psi_{2}^{0}=h_{2}^{0}.\label{fmodeuncoupled2} 
\end{gather}

A central result of our analysis concerns an explicit representation formula for solutions of (\ref{fmodesystem}), see Proposition \ref{mode1repformula} and Proposition \ref{modegeq2repformula} below, which allows us to handle (\ref{linearizedeq}) with minimal assumptions on the right-hand side. This representation formula is interesting in its own right, and
requires a non-trivial derivation due to the coupled nature of the ODE system.

We emphasise that the statement of Theorem \ref{mainthm} can be extended to functions \(h\) satisfying any polynomial growth or decay rate as \(r\to\infty\), after making suitable modifications to the norms (\ref{normphi}), (\ref{normh}). Having a flexible solvability theory of this kind is crucial in various Ginzburg-Landau type problems involving the construction of point vortices or filaments via linearization. See, for example, the works \cites{delpinokowalczykmusso2006,chironpacherie2021,liuwei2020,daviladelpinomedinarodiac2022,pacardriviere2000} where this approach is broadly developed. In another direction, properties of the operator \(L\) are known to play an important role when addressing the dynamic stability of the degree-one vortex in \(\mathbb{R}^{2}\). We mention the recent work of Palacios-Pusateri \cite{palaciospusateri2024}, which establishes decay estimates for the linearized evolution in the relativistic case, and the work of Gravejat-Pacherie-Smets \cite{gravejatpacheriesmets2022}, which establishes the orbital stability of the degree-one vortex under the Gross-Pitaevskii flow (\ref{gpeq}). A related issue concerns the analysis of magnetic vortices: we refer the reader to \cites{gustafson2002,gustafsonsigal2000} for spectral and orbital stability results in this setting. 

Finally, we note that previous results in the spirit of Theorem \ref{mainthm} have been obtained by Pacard and Rivi\`{e}re \cite{pacardriviere2000}, who studied equation (\ref{linearizedeq}) in the context of weighted H\"{o}lder spaces. Their approach relies partly on an investigation of the ODEs (\ref{fmodesystem}) (in a slightly different form) but no explicit representation formula for the Fourier modes is given. This restricts the type of decay rates which can be treated. Moreover, the analysis in \cite{pacardriviere2000} is mostly confined to the Dirichlet problem for (\ref{linearizedeq}) on bounded domains, which avoids the integrability issue for the orthogonality conditions in \(\mathbb{R}^{2}\).

The rest of the paper is devoted to the proof of Theorem \ref{mainthm}.

\section{Fourier mode \texorpdfstring{\(k=0\)}{k=0}}

The aim of this section is to prove Theorem \ref{mainthm} in the Fourier mode \(k=0\) case. We have the following result.

\begin{proposition}
\label{fouriermode0prop}
There exists a constant \(C>0\) such that for any 
\begin{equation}
\label{hmode0}
    h=iWh^{0},\quad\quad h^{0}=h_{1}^{0}(r)+ih_{2}^{0}(r),
\end{equation}
satisfying \(\norm{h}_{**}<\infty\), equation (\ref{linearizedeq}) has a solution 
\begin{equation}
\label{phimode0}
    \phi=iW\psi^{0},\quad\quad \psi^{0}=\psi_{1}^{0}(r)+i\psi_{2}^{0}(r),
\end{equation}such that 
\begin{equation}
\label{fouriermode0est}
    \norm{\phi}_{*}\leq C\norm{h}_{**}.
\end{equation}
\end{proposition}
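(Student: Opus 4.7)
The plan is to treat the two decoupled real-valued ODEs (\ref{fmodeuncoupled1}) and (\ref{fmodeuncoupled2}) separately, using the fact that both admit a self-adjoint form with weight $rw^{2}$.

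\emph{First equation.} Rewriting (\ref{fmodeuncoupled1}) as $(rw^{2}\psi_{1}^{0\,\prime})^{\prime} = rw^{2}h_{1}^{0}$ and imposing regularity at the origin (which kills both homogeneous solutions, namely the constant $1$ and the singular $\int_{1}^{r} ds/(sw(s)^{2})$ that blows up like $r^{-2}$ as $r\to 0$), we obtain the explicit representation
\begin{equation*}
\psi_{1}^{0}(r)=\int_{0}^{r}\frac{1}{sw(s)^{2}}\int_{0}^{s}tw(t)^{2}h_{1}^{0}(t)\,dt\,ds.
\end{equation*}
From $\abs{h}=w\abs{h^{0}}$ we have $\abs{h_{1}^{0}(t)} \leq \norm{h}_{**}/w(t)$ for $t \leq 2$ and $\abs{h_{1}^{0}(t)} \leq \norm{h}_{**}/t^{2}$ for $t \geq 2$. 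Using $w(t)\sim \alpha t$ as $t\to 0$ and $w(t)\to 1$ as $t\to\infty$, direct estimates give $\abs{\psi_{1}^{0}(r)} \leq C\norm{h}_{**}$ for $r \leq 2$. For $r \geq 2$, splitting $\int_{0}^{s} = \int_{0}^{2} + \int_{2}^{s}$ bounds the inner integral by $C\norm{h}_{**}(1+\log s)$, and the outer integration accumulates one further logarithm to yield $\abs{\psi_{1}^{0}(r)} \leq C\norm{h}_{**}(\log r)^{2}$, matching the definition of $\norm{\phi}_{*}$.

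\emph{Second equation.} The homogeneous operator $\psi \mapsto \psi'' + (2w'/w + 1/r)\psi' - 2w^{2}\psi$ has no bounded nontrivial solution, since otherwise $\phi = -W\psi(r)$ would yield a bounded element of $\ker L$ in the $k=0$ Fourier mode distinct from $iW$, contradicting the kernel characterization in \cite{pacardriviere2000}*{Theorem 3.2}. I will construct two linearly independent homogeneous solutions by asymptotic analysis: $\psi_{a}$, the solution regular at $0$ with $\psi_{a}(0)=1$ (built via a Frobenius power series), which grows like $C_{a}r^{-1/2}e^{\sqrt{2}r}$ at infinity; and $\psi_{b}$, the solution decaying at infinity with profile $\psi_{b}(r) \sim K_{0}(\sqrt{2}r)$ (built via a Volterra fixed-point argument near $\infty$), which blows up like $C_{b}r^{-2}$ as $r \to 0$. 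The Wronskian satisfies $(rw^{2}W[\psi_{a},\psi_{b}])'=0$, hence $W[\psi_{a},\psi_{b}] = c/(rw^{2})$ for a constant $c \neq 0$. Variation of parameters then gives
\begin{equation*}
\psi_{2}^{0}(r) = \frac{\psi_{a}(r)}{c}\int_{r}^{\infty} tw(t)^{2}\psi_{b}(t)h_{2}^{0}(t)\,dt + \frac{\psi_{b}(r)}{c}\int_{0}^{r} tw(t)^{2}\psi_{a}(t)h_{2}^{0}(t)\,dt,
\end{equation*}
where the choice of integration limits ensures boundedness at both endpoints. Using the asymptotics of $\psi_{a},\psi_{b}$ together with $\abs{h_{2}^{0}} \leq \norm{h}_{**}$ for $r \geq 2$, the exponential growth of $\psi_{a}$ at infinity is cancelled precisely by the decay of $\psi_{b}$, yielding $\abs{\psi_{2}^{0}(r)} \leq C\norm{h}_{**}$ for $r \geq 2$. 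For $r \leq 2$, the singularity of $\psi_{b}$ at the origin is compensated by the vanishing of $tw(t)^{2}$ in the integrand, giving the same bound.

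The main obstacle is the sharp asymptotic control of the homogeneous solutions $\psi_{a}$ and $\psi_{b}$: the final estimate hinges on the exponential growth of $\psi_{a}$ being cancelled precisely by the exponential decay of $\psi_{b}$ in the Green's function formula. These asymptotics can be established by a Liouville transformation reducing the equation near infinity to a perturbation of the modified Bessel equation of order zero, combined with a standard fixed-point argument; once in place, the remaining bounds are routine applications of Fubini's theorem and the profile (\ref{wproperties}).
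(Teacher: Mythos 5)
Your proposal is correct and follows essentially the same route as the paper: the identical explicit double-integral formula for $\psi_{1}^{0}$, and for $\psi_{2}^{0}$ the same two homogeneous solutions (your $\psi_{a},\psi_{b}$ are the paper's $z_{1,0},z_{2,0}$) combined in the same variation-of-parameters formula, with nondegeneracy from \cite{pacardriviere2000}*{Theorem 3.2} pinning down the asymptotics. The only cosmetic difference is that the paper builds the decaying solution first and obtains the second via the Liouville/reduction-of-order formula, whereas you construct both directly; this changes nothing substantive.
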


Recall that, for \(h\) of the form (\ref{hmode0}) and \(\phi\) of the form (\ref{phimode0}), problem (\ref{linearizedeq}) reduces to the uncoupled ODEs 
\begin{gather}
    \psi_{1}^{0\:''}+\left(2\frac{w'}{w}+\frac{1}{r}\right)\psi_{1}^{0\:'}=h_{1}^{0},\label{fmodeuncoupled1v2}\\
   \psi_{2}^{0\:''}+\left(2\frac{w'}{w}+\frac{1}{r}\right)\psi_{2}^{0\:'}-2w^{2}\psi_{2}^{0}=h_{2}^{0}.\label{fmodeuncoupled2v2} 
\end{gather}The first equation (\ref{fmodeuncoupled1v2}) can be solved explicitly via the formula
\begin{equation*}
        \psi_{1}^{0}(r)=\int_{0}^{r}\frac{ds}{w(s)^{2}s}\int_{0}^{s}w(t)^{2}th_{1}^{0}(t)\:dt,
    \end{equation*}from which it is directly verified, using (\ref{wproperties}), that estimate (\ref{fouriermode0est}) holds for \(\psi_{1}^{0}(r)\).

  Concerning the second equation (\ref{fmodeuncoupled2v2}), we have the following lemma. 

    \begin{lemma}
    \label{fmodeuncoupled2v2homlemma}
    The homogeneous problem
    \begin{equation}
    \label{fmodeuncoupled2v2hom}
    \psi_{2}^{0\:''}+\left(2\frac{w'}{w}+\frac{1}{r}\right)\psi_{2}^{0\:'}-2w^{2}\psi_{2}^{0}=0,    
    \end{equation}admits two linearly independent solutions \(z_{1,0}\) and \(z_{2,0}\) with the asymptotic behaviour
  \begin{equation}
  \label{mode0kernelprop}
    z_{1,0}(r)=\begin{cases}
        O(1)\text{ as }r\to0^{+},\\
        O\left(r^{-1/2}e^{\sqrt{2}r}\right)\text{ as }r\to\infty,
    \end{cases}\quad
        z_{2,0}(r)=\begin{cases}
            O(r^{-2})\text{ as }r\to0^{+},\\
            O\left(r^{-1/2}e^{-\sqrt{2}r}\right)\text{ as }r\to\infty.
        \end{cases}
\end{equation}
    \end{lemma}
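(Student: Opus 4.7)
The plan is to analyze (\ref{fmodeuncoupled2v2hom}) near each of the singular points \(r=0\) and \(r=\infty\) via standard ODE methods, and then to match the two local pictures globally using an energy argument. For the matching step it is convenient to record the equivalent Sturm-Liouville form \((rw^{2}\psi')' = 2rw^{4}\psi\), obtained by multiplying (\ref{fmodeuncoupled2v2hom}) through by \(rw^{2}\).

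Near \(r=0\), the expansion \(w(r)=\alpha r+O(r^{3})\) gives \(2w'/w+1/r=3/r+O(r)\) and \(2w^{2}=O(r^{2})\), so the origin is a regular singular point with indicial equation \(s(s-1)+3s=s(s+2)=0\) and exponents \(0,-2\). Standard Frobenius theory then produces two linearly independent local solutions, one analytic with \(\psi(r)=1+O(r^{2})\), the other of the form \(r^{-2}(1+O(r^{2}))\), possibly modified by a \(\log r\) correction (which, if present, is still \(O(r^{-2})\) near \(0\)). Both extend uniquely to smooth solutions on all of \((0,\infty)\) by ODE regularity.

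Near \(r=\infty\), since \(w-1=O(r^{-2})\) and \(w'=O(r^{-3})\), equation (\ref{fmodeuncoupled2v2hom}) is an integrable lower-order perturbation of the modified Bessel-type equation \(\psi''+\psi'/r-2\psi=0\), whose two fundamental solutions behave like \(r^{-1/2}e^{\pm\sqrt{2}r}\). To capture these rates I insert the ansatz \(\psi(r)=r^{-1/2}e^{\pm\sqrt{2}r}g_{\pm}(r)\) into (\ref{fmodeuncoupled2v2hom}) and recast the resulting ODE for \(g_{\pm}\) as a Volterra-type integral equation \(g_{\pm}=1+T_{\pm}g_{\pm}\) on \([R_{0},\infty)\). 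The \(O(r^{-2})\) decay of the perturbation coefficients (inherited from \(w-1\) and \(w'\)) makes \(T_{\pm}\) a contraction on \(C_{b}([R_{0},\infty))\) for \(R_{0}\) large enough, yielding two local solutions with the claimed asymptotics, which then extend to \((0,\infty)\).

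To conclude, let \(z_{1,0}\) denote a solution bounded at the origin and \(z_{2,0}\) a solution decaying exponentially at infinity; both are uniquely determined up to scalar by the above local analyses. It remains to verify that \(z_{1,0}\) actually carries the \(r^{-1/2}e^{\sqrt{2}r}\) growing mode at infinity, and symmetrically that \(z_{2,0}\) actually picks up the \(r^{-2}\) singular profile at the origin. I argue by contradiction: if \(z_{1,0}\) were exponentially decaying at infinity, then multiplying the Sturm-Liouville form by \(z_{1,0}\) and integrating by parts over \((0,\infty)\) makes the boundary terms vanish (at \(r=0\) by boundedness of \(z_{1,0}\) together with the \(rw^{2}\) factor, at \(r=\infty\) by exponential decay of both \(z_{1,0}\) and \(z_{1,0}'\)), leaving
\[
\int_{0}^{\infty}rw^{2}\bigl[(z_{1,0}')^{2}+2w^{2}z_{1,0}^{2}\bigr]\,dr=0,
\]
which forces \(z_{1,0}\equiv 0\), a contradiction. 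The symmetric argument rules out \(z_{2,0}\) being bounded at \(r=0\), and the linear independence of \(z_{1,0}\) and \(z_{2,0}\) then follows from their mismatched asymptotics. I expect the main technical obstacle to be the asymptotic construction at infinity — concretely, setting up the Volterra integral equation for \(g_{\pm}\) and verifying the contraction estimate using only the \(O(r^{-2})\) perturbation coming from \(w-1\); the analysis at the origin is entirely standard Frobenius theory.
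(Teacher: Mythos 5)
Your proof is correct, but it takes a genuinely different route from the paper's. The paper constructs the decaying solution \(z_{2,0}\) at infinity by barriers, then invokes the known nondegeneracy theorems (\cite{delpinofelmerkowalczyk2004}, \cite{pacardriviere2000}) to force \(z_{2,0}\) to blow up like \(r^{-2}\) at the origin, uses the maximum principle to show \(z_{2,0}\) never vanishes, and finally produces \(z_{1,0}\) explicitly by reduction of order, \(z_{1,0}(r)=z_{2,0}(r)\int_{0}^{r}\frac{ds}{w(s)^{2}z_{2,0}(s)^{2}s}\), from which its asymptotics are read off. You instead do a purely local analysis at both singular points (Frobenius at \(r=0\) with exponents \(0,-2\); a Volterra/contraction perturbation of the modified Bessel equation at infinity) and then rule out a solution that is simultaneously bounded at the origin and exponentially decaying at infinity via the Sturm--Liouville identity \((rw^{2}\psi')'=2rw^{4}\psi\), whose associated energy is positive definite. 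Your energy step is essentially the mode-zero, imaginary-part restriction of the quadratic form (\ref{quadform}), where the sign of the potential term is favourable, so it replaces the paper's appeal to the external nondegeneracy theorem with a short self-contained argument; this is a real gain in elementarity. What the paper's route buys in exchange is the explicit formula for \(z_{1,0}\) and the non-vanishing of \(z_{2,0}\), which make the asymptotics of \(z_{1,0}\) (and the later variation-of-parameters estimates) completely transparent. Two small points to tidy in your write-up: the vanishing of the boundary term at \(r=0\) needs control of \(z_{1,0}'\) as well as \(z_{1,0}\) (available from the exponent-\(0\) Frobenius structure, which gives \(z_{1,0}'=O(r)\), so that \(rw^{2}z_{1,0}z_{1,0}'=O(r^{4})\)); and since the lemma's bounds are big-\(O\), the real content of your matching step is the linear independence of \(z_{1,0}\) and \(z_{2,0}\), which your contradiction argument does deliver.
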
   
  
\begin{proof}
As \(r\to\infty\) problem (\ref{fmodeuncoupled2v2hom}) becomes asymptotic to \(\psi_{2}^{0\:''}+\frac{1}{r}\psi_{2}^{0\:'}-2\psi_{2}^{0}=0\). This is a Bessel-type equation, which has a solution \(\psi_{2}^{0}=O\big(r^{-1/2}e^{-\sqrt{2}r}\big)\) as \(r\to\infty\). A suitable use of barriers gives a solution \(z_{2,0}(r)\) of (\ref{fmodeuncoupled2v2hom}) with this behaviour. By the nondegeneracy results in \cite{delpinofelmerkowalczyk2004}*{Theorem 1.1} and \cite{pacardriviere2000}*{Theorem 3.2}, the function \(z_{2,0}(r)\) must blow up faster than \(O(r^{-1})\) as \(r\to0^{+}\) (otherwise, one could construct a bounded mode 0 element of the kernel of \(L\) which is different from \(iW\)). Since equation (\ref{fmodeuncoupled2v2hom}) becomes similar to \(\psi_{2}^{0\:''}+\frac{3}{r}\psi_{2}^{0\:'}=0\) as \(r\to0^{+}\), we necessarily have \(z_{2,0}(r)=O(r^{-2})\) as \(r\to0^{+}\). The maximum principle then implies that \(z_{2,0}(r)\) does not vanish for any \(r\in(0,\infty)\), thus Liouville's formula for the Wronskian gives a second linearly independent solution of (\ref{fmodeuncoupled2v2hom}) defined via the formula
    \begin{equation*}
        z_{1,0}(r)=z_{2,0}(r)\int_{0}^{r}\frac{1}{w(s)^{2}z_{2,0}(s)^{2}s}\:ds.
    \end{equation*}From this expression it is straightforward to check that \(z_{1,0}(r)\) has the asymptotic properties stated.
\end{proof}  

\begin{proof}[Proof of Proposition \ref{fouriermode0prop}]
    It remains to solve equation (\ref{fmodeuncoupled2v2}). Using the functions \(z_{1,0}\) and \(z_{2,0}\), a solution of this problem can be obtained via the variation of parameters formula
    \begin{equation*}
    \psi_{2}^{0}(r)=z_{1,0}(r)\int_{r}^{\infty}w(s)^{2}sh_{2}^{0}(s)z_{2,0}(s)\:ds+z_{2,0}(r)\int_{0}^{r}w(s)^{2}sh_{2}^{0}(s)z_{1,0}(s)\:ds.
\end{equation*}It is then directly verified, using (\ref{wproperties}) and (\ref{mode0kernelprop}), that estimate (\ref{fouriermode0est}) holds for \(\psi_{2}^{0}\).
\end{proof}

\section{Fourier mode \texorpdfstring{\(k=1\)}{k=1}}

In this section we prove Theorem \ref{mainthm} in the Fourier mode \(k=1\) case. We have the following results.

\begin{proposition}
\label{fouriermode1prop}
There exists a constant \(C>0\) such that for any 
\begin{equation}
\label{hmode1v1}
    h=iWh_{1}^{1},\quad\quad h_{1}^{1}=h_{11}^{1}(r)\cos\theta+ih_{12}^{1}(r)\sin\theta,
\end{equation}
satisfying \(\norm{h}_{**}<\infty\) and the orthogonality condition \(\left\langle h,\frac{\partial W}{\partial x_{2}} \right\rangle=0\), equation (\ref{linearizedeq}) has a solution 
\begin{equation}
\label{phimode1v1}
    \phi=iW\psi_{1}^{1},\quad\quad \psi_{1}^{1}=\psi_{11}^{1}(r)\cos\theta+i\psi_{12}^{1}(r)\sin\theta,
\end{equation}such that 
\begin{equation}
\label{fouriermode1estv1}
    \norm{\phi}_{*}\leq C\norm{h}_{**}.
\end{equation}
\end{proposition}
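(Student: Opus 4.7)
The strategy adapts the blueprint of Section~2 to the coupled $2\times 2$ ODE system (\ref{fmodesystem}) with $k=l=1$. Three ingredients are required: a basis of four linearly independent homogeneous solutions with controlled asymptotics at both $0$ and $\infty$, a variation-of-parameters-type representation formula (the forthcoming Proposition \ref{mode1repformula}), and a careful matching of the orthogonality condition $\langle h,\partial W/\partial x_2\rangle=0$ with the resonant direction appearing in that formula.

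One homogeneous solution is immediate: since $L[\partial W/\partial x_2]=0$, the substitution $\phi=iW\psi$ yields $(\psi_{11}^1,\psi_{12}^1)=(1/r,\,-w'/w)$ as a ``good'' solution of the homogeneous system, bounded with respect to the $*$-norm. For the three remaining solutions I would exploit the fact that, as $r\to\infty$, the coupling terms $(2/r^2)\psi_{12}^1$ and $(2/r^2)\psi_{11}^1$ are lower-order perturbations, so the leading-order system decouples into an Euler-type equation $\psi_{11}^{1\,''}+\psi_{11}^{1\,'}/r-\psi_{11}^1/r^2\approx 0$ for the first component (prospective asymptotics $r$ and $1/r$) and a modified-Bessel-type equation $\psi_{12}^{1\,''}+\psi_{12}^{1\,'}/r-2\psi_{12}^1\approx 0$ for the second (prospective asymptotics $r^{-1/2}e^{\pm\sqrt{2}r}$). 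Barrier and contraction arguments analogous to Lemma \ref{fmodeuncoupled2v2homlemma} then produce three further solutions of the full coupled system with these behaviors. It is useful to rewrite (\ref{fmodesystem}) in the self-adjoint form
\begin{equation*}
(w(r)^2 r\, \varphi')' - \frac{w(r)^2}{r}\begin{pmatrix} 1 & 2 \\ 2 & 1+2w(r)^2 r^2\end{pmatrix}\varphi = w(r)^2 r\,\widetilde h,
\end{equation*}
which makes the natural weight $w^2 r$ transparent and, since the coefficient matrix is symmetric, supplies the Wronskian-type identity needed for the representation formula.

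Assuming that representation formula, the next step is to verify that the orthogonality condition translates, via the Fourier expansion and a direct computation, into
\begin{equation*}
\int_0^\infty \left[w(s)^2 h_{11}^1(s) - w(s)w'(s)\,s\, h_{12}^1(s)\right] ds = 0,
\end{equation*}
which is precisely the pairing of $\widetilde h$ against the kernel direction $(1/s,\,-w'(s)/w(s))$ in the weight $w(s)^2 s$. In the representation formula this identity cancels the coefficient multiplying the algebraically growing companion solution, leaving a particular solution with $\psi_{11}^1=O((\log r)^2)$ and $\psi_{12}^1=O(1)$ as $r\to\infty$ and bounded behavior near the origin. Pointwise estimation of the remaining integrals using (\ref{wproperties}) and the prescribed asymptotics of the basis then yields (\ref{fouriermode1estv1}).

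The main obstacle is the derivation of the representation formula itself. Unlike the mode-$0$ case, the coupled system does not admit a componentwise variation of parameters: one must track all four basis solutions simultaneously and identify the precise combination of integrals against $\widetilde h$ that produces a particular solution belonging to the $*$-norm class. Equally delicate is checking that the orthogonality identity matches the coefficient of the growing component exactly, so that the cancellation is complete; any residual mismatch would leave algebraic growth incompatible with the $*$-norm.
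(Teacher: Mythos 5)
Your outline follows the same route as the paper: identify the explicit kernel element $(1/r,-w'/w)$ coming from $\partial W/\partial x_{2}$, build a basis of four homogeneous solutions of (\ref{fmode1systemhom}) with the asymptotics $r^{\pm1}$ in the first component and $r^{-1/2}e^{\pm\sqrt{2}r}$ in the second as $r\to\infty$, derive a variation-of-parameters representation formula weighted by $w(s)^{2}s$, and use the orthogonality identity $\int_{0}^{\infty}\bigl[w^{2}h_{11}^{1}-ww'\,s\,h_{12}^{1}\bigr]\,ds=0$ (which you compute correctly) to replace the coefficient of the algebraically growing solution by a convergent tail integral. Your self-adjoint reformulation $(w^{2}r\,\varphi')'-\tfrac{w^{2}}{r}M\varphi=w^{2}r\,\widetilde{h}$ is a genuinely different, and cleaner, route to the representation formula: the symmetry of $M$ makes $w^{2}r\,(u\cdot v'-u'\cdot v)$ constant on pairs of homogeneous solutions, which explains the pairing structure $(z_{1,1},z_{2,1})$, $(z_{3,1},z_{4,1})$ directly, whereas the paper obtains it by Cramer's rule and a cofactor computation whose result is then identified through its asymptotics.

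There is, however, a concrete gap. You describe the candidate behaviours of the homogeneous solutions as $r\to\infty$ (and implicitly at $r\to0^{+}$), but you never connect the two ends, and the final estimates cannot be run without that connection. For instance, the term in the representation formula carrying $\int_{r}^{\infty}w^{2}s\,\widetilde{h}\cdot z_{4,1}\,ds$ multiplied by the exponentially growing solution $z_{3,1}$ is only controlled at infinity because $z_{4,1}$ decays exponentially there, and only controlled near the origin because $z_{3,1}=O(r)$ and $z_{4,1}=O(r^{-3})$ there; one must therefore know that the solution which is $O(r)$ at the origin is precisely the one that grows exponentially at infinity, and that the exponentially decaying solution at infinity blows up like $r^{-3}$ at the origin. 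This matching is not automatic: it is the content of the paper's Proposition \ref{mode1kernel}, and its proof requires two ingredients absent from your sketch, namely the nondegeneracy statement (Lemma \ref{fmode1nondegeneracy}, ruling out a second bounded kernel element in mode one) and a maximum principle for the cooperative system (Lemma \ref{nosignchangemode1}, forcing the components of a solution positive near one end to stay positive, hence to pick out the sign-definite asymptotic branch at the other end). Without these, the asymptotics of $z_{2,1},z_{3,1},z_{4,1}$ at the end opposite to where they were constructed are undetermined and the $*$-norm bounds do not follow. A minor further point: your claimed growth $\psi_{11}^{1}=O((\log r)^{2})$ is pessimistic; once the orthogonality condition is used, the mode-one solution is in fact bounded at infinity, though this does not affect the validity of (\ref{fouriermode1estv1}).
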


\begin{corollary}
\label{fouriermode1cor}
    There exists a constant \(C>0\) such that for any
    \begin{equation}
\label{hmode1v2}
    h=iWh_{1}^{2},\quad\quad h_{1}^{2}=h_{11}^{2}(r)\sin\theta+ih_{12}^{2}(r)\cos\theta,
\end{equation}
satisfying \(\norm{h}_{**}<\infty\) and the orthogonality condition \(\left\langle h,\frac{\partial W}{\partial x_{1}} \right\rangle=0\), equation (\ref{linearizedeq}) has a solution 
\begin{equation}
\label{phimode1v2}
    \phi=iW\psi_{1}^{2},\quad\quad \psi_{1}^{2}=\psi_{11}^{2}(r)\sin\theta+i\psi_{12}^{2}(r)\cos\theta,
\end{equation}such that 
\begin{equation}
\label{fouriermode1estv2}
    \norm{\phi}_{*}\leq C\norm{h}_{**}.
\end{equation}
\end{corollary}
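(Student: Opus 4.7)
The plan is to reduce Corollary \ref{fouriermode1cor} to Proposition \ref{fouriermode1prop} by exploiting the discrete symmetry of the problem that exchanges the two ``$k=1$'' Fourier subspaces: the type-1 and type-2 modes differ essentially by a $\pi/2$ rotation of the angular variable, which simultaneously exchanges the roles of $\partial W/\partial x_1$ and $\partial W/\partial x_2$ (up to sign). The whole argument proceeds by constructing a type-1 datum from the given type-2 datum, invoking the proposition, and then inverting the construction.

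Concretely, given $h=iWh_1^{2}$ with $h_1^{2}=h_{11}^{2}(r)\sin\theta+ih_{12}^{2}(r)\cos\theta$, I would define the auxiliary type-1 datum
\[
\widetilde{h}:=iW\,\widetilde{h}_1^{1},\qquad \widetilde{h}_1^{1}=h_{11}^{2}(r)\cos\theta-ih_{12}^{2}(r)\sin\theta,
\]
i.e., $\widetilde{h}_{11}^{1}=h_{11}^{2}$ and $\widetilde{h}_{12}^{1}=-h_{12}^{2}$. It is immediate from the definitions of $\norm{\cdot}_{**}$ that $\norm{\widetilde{h}}_{**}=\norm{h}_{**}$. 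The first non-trivial step is to verify that $\widetilde{h}$ satisfies the hypothesis of Proposition \ref{fouriermode1prop}, namely $\langle\widetilde{h},\partial W/\partial x_{2}\rangle=0$. A direct angular computation (using $\partial_{x_j}W=e^{i\theta}[w'\cos\theta_{j}-i\,(w/r)\sin\theta_{j}]$ with appropriate components) reduces the two orthogonality conditions to radial integrals of the schematic form $\int_0^\infty[w^{2}h_{11}-rww'h_{12}]\,dr$ and $\int_0^\infty[w^{2}h_{11}+rww'h_{12}]\,dr$, and the sign change $\widetilde{h}_{12}^{1}=-h_{12}^{2}$ interchanges them precisely.

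Next I would apply Proposition \ref{fouriermode1prop} to obtain $\widetilde{\phi}=iW\widetilde{\psi}_{1}^{1}$ with $\widetilde{\psi}_{1}^{1}=\widetilde{\psi}_{11}^{1}(r)\cos\theta+i\widetilde{\psi}_{12}^{1}(r)\sin\theta$ solving $L[\widetilde{\phi}]=\widetilde{h}$ and satisfying $\norm{\widetilde{\phi}}_{*}\leq C\norm{\widetilde{h}}_{**}$. I then define
\[
\phi:=iW\psi_{1}^{2},\qquad \psi_{11}^{2}:=\widetilde{\psi}_{11}^{1},\quad \psi_{12}^{2}:=-\widetilde{\psi}_{12}^{1}.
\]
Inspecting the $k=1$ case of (\ref{fmodesystem}), the matrices for $l=1$ and $l=2$ differ only by the sign of the off-diagonal $\pm 2$ entries; substituting the above sign flip in the $l=1$ equations satisfied by $(\widetilde{\psi}_{11}^{1},\widetilde{\psi}_{12}^{1})$ with right-hand side $(h_{11}^{2},-h_{12}^{2})$ yields exactly the $l=2$ equations satisfied by $(\psi_{11}^{2},\psi_{12}^{2})$ with right-hand side $(h_{11}^{2},h_{12}^{2})$. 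Hence $L[\phi]=h$.

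Finally, since both $\norm{\cdot}_{*}$ and $\norm{\cdot}_{**}$ depend on the Fourier components only through their moduli, the sign flip does not affect them and $\norm{\phi}_{*}=\norm{\widetilde{\phi}}_{*}\leq C\norm{\widetilde{h}}_{**}=C\norm{h}_{**}$, which is the required estimate (\ref{fouriermode1estv2}). There is no genuine analytic obstacle here; the only point requiring care is keeping the sign conventions consistent when comparing the orthogonality integrals and the two ODE systems.
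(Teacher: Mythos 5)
Your proposal is correct and follows essentially the same route as the paper: the paper likewise exploits the sign-flip symmetry, observing that \((\psi_{11}^{2},-\psi_{12}^{2})\) with right-hand side \((h_{11}^{2},-h_{12}^{2})\) satisfies the \(l=1\) system (\ref{fmode1system}) and that \(\left\langle h,\frac{\partial W}{\partial x_{1}}\right\rangle=0\) becomes exactly the orthogonality condition \(\int_{0}^{\infty}w^{2}s\,\widetilde{h}\cdot z_{1,1}\,ds=0\) needed there. The only cosmetic difference is that the paper applies the representation formula (\ref{mode1partsol}) directly to the flipped data rather than routing through the statement of Proposition \ref{fouriermode1prop} via an auxiliary type-1 datum.
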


The proof of Proposition \ref{fouriermode1prop} proceeds in several steps: we then deduce Corollary \ref{fouriermode1cor} using a simple symmetry argument. Recall that, for \(h\) of the form (\ref{hmode1v1}) and \(\phi\) of the form (\ref{phimode1v1}), problem (\ref{linearizedeq}) reduces to the coupled ODE system 
\begin{equation}
\label{fmode1system}
\varphi''+\left(2\frac{w'}{w}+\frac{1}{r}\right)\varphi'-\frac{1}{r^{2}}\begin{pmatrix}
        1 & 2 \\
        2 & 1+2w^{2}r^{2} 
    \end{pmatrix}\varphi=\widetilde{h},
\end{equation}where 
\begin{equation}
    \varphi=\begin{pmatrix}[1.3]
        \psi_{11}^{1}\\
        \psi_{12}^{1}
    \end{pmatrix},\quad\quad\widetilde{h}= 
    \begin{pmatrix}[1.3]
        h_{11}^{1}\\
        h_{12}^{1}
    \end{pmatrix}.
\end{equation}
We intend to solve this problem using the variation of parameters method. To this end, our first objective is to classify the solutions to the homogeneous version of (\ref{fmode1system}). This is carried out in Subsection \ref{solhomproblemfmode1} below, by studying the possible behaviours of the kernel as \(r\to0^{+}\) and as \(r\to\infty\). We then solve the inhomogeneous problem (\ref{fmode1system}) in Subsection \ref{solinhomproblemfmode1}.

\subsection{Solutions of the homogeneous problem: Fourier mode \texorpdfstring{\(k=1\)}{k=1}}
\label{solhomproblemfmode1}

The homogeneous problem associated to (\ref{fmode1system}) reads 
\begin{equation}
\label{fmode1systemhom}
\varphi''+\left(2\frac{w'}{w}+\frac{1}{r}\right)\varphi'-\frac{1}{r^{2}}\begin{pmatrix}
        1 & 2 \\
        2 & 1+2w^{2}r^{2} 
    \end{pmatrix}\varphi=0,
\end{equation}where \(\varphi=(\varphi_{1},\varphi_{2})\). We recall the nondegeneracy result for \(L\) in this setting.

\begin{lemma}
\label{fmode1nondegeneracy}
    Equation (\ref{fmode1systemhom}) admits the explicit solution \(\varphi=(1/r,-w'/w)\).
    Moreover, any solution of (\ref{fmode1systemhom}) which blows up at most like \(O(r^{-1})\) as \(r\to0^{+}\) and remains bounded as \(r\to\infty\) must necessarily be a multiple of this solution.
\end{lemma}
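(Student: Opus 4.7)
The plan is to split the proof into two independent pieces: verifying that $(1/r, -w'/w)$ solves (\ref{fmode1systemhom}), and then classifying all solutions with the stated growth/decay as multiples of this one by invoking the known $L^\infty$-nondegeneracy of $L$.

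For the first piece, rather than plug into (\ref{fmode1systemhom}) and grind through the ODE, I would exploit the translation invariance of (\ref{GLeq}) to observe that $\partial W/\partial x_{2}$ lies in the kernel of $L$. A brief polar-coordinate computation gives
\begin{equation*}
\psi := -i\frac{\partial_{x_{2}}W}{W} = \frac{1}{r}\cos\theta - i\frac{w'}{w}\sin\theta,
\end{equation*}
which is precisely of the form $\psi_{11}^{1}(r)\cos\theta + i\psi_{12}^{1}(r)\sin\theta$ with radial coefficients $(\psi_{11}^{1}, \psi_{12}^{1}) = (1/r, -w'/w)$. Since the reduction of $L$ to the Fourier-mode system (\ref{fmodesystem}) is an equivalence, the identity $L[\partial W/\partial x_{2}] = 0$ immediately yields that $(1/r, -w'/w)$ solves (\ref{fmode1systemhom}).

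For the classification, suppose $\varphi = (\varphi_{1},\varphi_{2})$ solves (\ref{fmode1systemhom}) with $\varphi = O(r^{-1})$ as $r \to 0^{+}$ and $\varphi$ bounded as $r \to \infty$. The plan is to reconstruct
\begin{equation*}
\phi := iW\bigl(\varphi_{1}(r)\cos\theta + i\varphi_{2}(r)\sin\theta\bigr),
\end{equation*}
which satisfies $L[\phi] = 0$ in $\mathbb{R}^{2}\setminus\{0\}$. Using $|W| = w = O(r)$ near the origin together with $\varphi = O(r^{-1})$ shows $\phi$ is bounded near $0$, and at infinity the bound follows from $|W|\leq 1$ combined with the boundedness of $\varphi$. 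Standard elliptic regularity removes the isolated singularity at the origin, so $\phi$ is a bounded classical solution of $L[\phi] = 0$ on all of $\mathbb{R}^{2}$. The $L^\infty$-nondegeneracy from \cite{pacardriviere2000}*{Theorem 3.2} (equivalently \cite{delpinofelmerkowalczyk2004}*{Theorem 1.1}) then forces
\begin{equation*}
\phi = c_{0}\,iW + c_{1}\frac{\partial W}{\partial x_{1}} + c_{2}\frac{\partial W}{\partial x_{2}}
\end{equation*}
for some $c_{0},c_{1},c_{2}\in\mathbb{R}$.

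To conclude, I would perform the analogous polar computation for each of the three generators to locate them inside the decomposition (\ref{psifourier}): $iW$ corresponds to $\psi \equiv 1$ and lies purely in mode $k=0$; $\partial W/\partial x_{1}$ yields $\psi = -(1/r)\sin\theta - i(w'/w)\cos\theta$, which is purely of type $\psi_{1}^{2}$; and $\partial W/\partial x_{2}$, by the first part of the proof, is purely of type $\psi_{1}^{1}$. Since $\phi$ lives exclusively in the $\psi_{1}^{1}$ subspace, orthogonality of the Fourier components forces $c_{0} = c_{1} = 0$, hence $\phi = c_{2}\,\partial W/\partial x_{2}$, and reading off the radial coefficients gives $\varphi = c_{2}(1/r, -w'/w)$ as required. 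The main delicate point in this plan is ensuring that the reconstructed $\phi$ is \emph{genuinely} bounded on all of $\mathbb{R}^{2}$ (including a removable singularity at the origin): the $O(r^{-1})$ growth hypothesis on $\varphi$ is exactly what is needed so that the $O(r)$ vanishing of $W$ absorbs it, and any rougher behaviour would invalidate the appeal to $L^\infty$-nondegeneracy. Once boundedness is in hand, the remainder is Fourier bookkeeping.
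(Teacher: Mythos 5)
Your proposal is correct and follows essentially the same route as the paper: identify \(\partial W/\partial x_{2}=iW\big(\tfrac{1}{r}\cos\theta-i\tfrac{w'}{w}\sin\theta\big)\) as the mode-\((1,1)\) kernel element, and invoke the \(L^{\infty}\)-nondegeneracy of \cite{pacardriviere2000}*{Theorem 3.2} together with \(w\sim r\) near \(0\) and \(w\sim1\) at infinity to classify the admissible solutions. You merely make explicit two steps the paper leaves implicit (the removable singularity at the origin and the Fourier orthogonality that kills the \(iW\) and \(\partial W/\partial x_{1}\) components), both of which are handled correctly.
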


\begin{proof}
By the nondegeneracy result of Pacard and Rivi\`{e}re \cite{pacardriviere2000}*{Theorem 3.2}, any bounded solution of \(L[\phi]=0\) of the form (\ref{phimode1v1}) must necessarily be a multiple of
\begin{equation*}
    \frac{\partial W}{\partial x_{2}}=iW\bigg(\frac{1}{r}\cos\theta-i\frac{w'}{w}\sin\theta\bigg).
\end{equation*}Translating this fact to (\ref{fmode1systemhom}), and using the properties \(\abs{W}=w\sim r\) as \(r\to0^{+}\) and \(\abs{W}=w\sim1\) as \(r\to\infty\), we deduce the statement of the lemma.
\end{proof}

The primary aim of this subsection is to prove the following result.

\begin{proposition}
\label{mode1kernel}
The homogeneous problem (\ref{fmode1systemhom}) 
admits four linearly independent solutions \(z_{1,1}\), \(z_{2,1}\), \(z_{3,1}\), \(z_{4,1}\) with the asymptotic behaviour
\begin{equation}
\label{mode1kernelasymprop}
\begin{aligned}[c]
z_{1,1}(r)&=\begin{pmatrix}[1.3]
    O(r^{-1})\\
    O(r^{-1})
    \end{pmatrix},\text{ as }r\to0^{+},\\
z_{2,1}(r)&=\begin{pmatrix}[1.3]
    O(r^{-1}\log r)\\
    O(r^{-1}\log r)
    \end{pmatrix},\text{ as }r\to0^{+},\\
z_{3,1}(r)&=\begin{pmatrix}[1.3]
    O(r)\\
    O(r)
    \end{pmatrix},\text{ as }r\to0^{+},\\
z_{4,1}(r)&=\begin{pmatrix}[1.3]
    O(r^{-3})\\
    O(r^{-3})
    \end{pmatrix},\text{ as }r\to0^{+},    
\end{aligned}
\quad\quad
\begin{aligned}[c]
z_{1,1}(r)&=\begin{pmatrix}[1.3]
    O(r^{-1})\\
    O(r^{-3})
    \end{pmatrix},\text{ as }r\to\infty,\\
z_{2,1}(r)&=\begin{pmatrix}[1.3]
    O(r)\\
    O(r^{-1})
    \end{pmatrix},\text{ as }r\to\infty,\\
z_{3,1}(r)&=\begin{pmatrix}[1.3]
    O\big(r^{-5/2}e^{\sqrt{2}r}\big)\\
    O\big(r^{-1/2}e^{\sqrt{2}r}\big)
    \end{pmatrix},\text{ as }r\to\infty,\\
z_{4,1}(r)&=\begin{pmatrix}[1.3]
    O\big(r^{-5/2}e^{-\sqrt{2}r}\big)\\
    O\big(r^{-1/2}e^{-\sqrt{2}r}\big)
    \end{pmatrix},\text{ as }r\to\infty.    
\end{aligned}
\end{equation}The first solution \(z_{1,1}(r)\) is given explicitly by
\begin{equation}
\label{mode1expker}
        z_{1,1}(r)=\begin{pmatrix}[1.3]
    1/r\\
    -w'/w
    \end{pmatrix}.
    \end{equation}
\end{proposition}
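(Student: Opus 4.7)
My plan is to construct the four fundamental solutions by combining the explicit kernel element $z_{1,1}=(1/r,-w'/w)$ from Lemma \ref{fmode1nondegeneracy} with a Frobenius analysis at $r=0$ and an asymptotic analysis at $r=\infty$. I will distinguish the solutions primarily by their behaviour at infinity; once each exists globally, the behaviour at $r=0$ follows by matching against the locally enumerated modes.

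First I will carry out the indicial analysis at $r=0$. Using $w\sim\alpha r$ and $w'\sim\alpha$, equation \eqref{fmode1systemhom} is asymptotic to $\varphi''+\frac{3}{r}\varphi'-\frac{1}{r^{2}}M_{0}\varphi=0$ with $M_{0}=\begin{pmatrix}1&2\\2&1\end{pmatrix}$. The ansatz $\varphi=r^{\lambda}v$ reduces to $(\lambda^{2}+2\lambda)v=M_{0}v$, and since $M_{0}$ has eigenvalues $3,-1$ with eigenvectors $(1,1),(1,-1)$, the indicial roots are $\lambda\in\{1,-3\}$ along $(1,1)$ and $\lambda=-1$ as a \emph{double} root along $(1,-1)$; the double root forces a logarithmic correction of type $r^{-1}\log r$, which accounts for the behaviour of $z_{2,1}$ near $0$. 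At $r=\infty$, since $w\to 1$ and $w'/w=O(r^{-3})$, the second component of \eqref{fmode1systemhom} is dominated by $-2w^{2}\varphi_{2}$. In the \emph{polynomial} regime, the dominant balance $-2\varphi_{2}\approx\frac{2\varphi_{1}}{r^{2}}$ yields $\varphi_{2}\sim-\varphi_{1}/r^{2}$, reducing the first equation to $\varphi_{1}''+\frac{1}{r}\varphi_{1}'-\frac{1}{r^{2}}\varphi_{1}\approx 0$ with independent behaviours $\varphi_{1}\sim r$ and $\varphi_{1}\sim r^{-1}$. In the \emph{exponential} regime, the second equation becomes asymptotic to $\varphi_{2}''+\frac{1}{r}\varphi_{2}'-2\varphi_{2}=0$, whose solutions $\varphi_{2}\sim r^{-1/2}e^{\pm\sqrt{2}r}$ then force $\varphi_{1}\sim r^{-5/2}e^{\pm\sqrt{2}r}$ through the first equation (where the dominant balance is $\varphi_{1}''\approx 2\varphi_{2}/r^{2}$).

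I will then assemble the four solutions globally. The explicit $z_{1,1}$ has its asymptotics read off directly from \eqref{wproperties}. For $z_{4,1}$ I would run a contraction argument on $[R,\infty)$ for large $R$: starting from the Bessel ansatz $\varphi_{2}=r^{-1/2}e^{-\sqrt{2}r}(1+\eta_{2})$ and $\varphi_{1}=r^{-5/2}e^{-\sqrt{2}r}(c+\eta_{1})$ with $c$ fixed by leading-order balance, I recast the system for $(\eta_{1},\eta_{2})$ as a fixed-point problem in weighted sup norms decaying as $r\to\infty$, and extend the resulting solution back to $(0,\infty)$ by classical ODE theory. The exponentially growing $z_{3,1}$ is produced by an analogous argument with sign reversed, or equivalently via the conserved Wronskian-type pairing $W(\varphi,\psi)=rw^{2}[\varphi^{T}\psi'-(\varphi')^{T}\psi]$ coming from the self-adjoint form $(rw^{2}\varphi')'-(w^{2}/r)M\varphi$ of the operator. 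Finally $z_{2,1}$, the polynomial linear-growth solution, is obtained by reduction of order from $z_{1,1}$: exploiting the Wronskian pairing I would close an auxiliary scalar equation for the coefficient of a complement of $z_{1,1}$, solve it with the appropriate boundary behaviour, and verify that the branch so obtained grows linearly at infinity and carries the $r^{-1}\log r$ correction at the origin.

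The asymptotics at the opposite endpoint for each constructed solution are then checked by expanding it as a linear combination of the four local Frobenius modes (at $r=0$) or of the four asymptotic regimes (at $r=\infty$) and bounding by the dominant contributing mode, which yields the stated $O(\cdot)$ estimates. The main obstacle will be the reduction-of-order step used to produce $z_{2,1}$: since the standard scalar reduction-of-order formula does not apply directly to a coupled $2\times 2$ second-order system, one must carefully leverage the self-adjoint Wronskian structure to extract a usable reduced equation. A related delicate point is rigorously tracking the $r^{-1}\log r$ correction at $r=0$ arising from the double indicial root, which requires a careful two-term Frobenius expansion rather than a naive leading-order substitution.
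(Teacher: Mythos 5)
Your local analysis is sound and coincides with the paper's: the Frobenius/indicial computation at $r=0$ (eigenvalues $3,-1$ of the matrix, roots $\lambda=1,-3$ and the double root $\lambda=-1$ producing the $r^{-1}\log r$ mode) reproduces Lemma \ref{mode1kernelat0}, and the two dominant balances at infinity reproduce Lemma \ref{mode1kernelatinf}. The gap is in the connection step. Proposition \ref{mode1kernel} is not a statement about generic rates: it asserts that the solution which is $O(r)$ at the origin connects to the \emph{growing exponential} at infinity ($z_{3,1}$), and that the solution carrying the $r^{-1}\log r$ singularity at the origin has only \emph{polynomial} growth $O(r)$ at infinity ($z_{2,1}$). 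Your proposed verification --- ``expand the constructed solution in the local modes at the opposite endpoint and bound by the dominant contributing mode'' --- only ever yields the worst-case rate, i.e.\ $O(r^{-3})$ at the origin for anything built from infinity and $O(r^{-5/2}e^{\sqrt 2 r})$ at infinity for anything built from the origin. That is strictly weaker than the claimed asymptotics for $z_{2,1}$ and $z_{3,1}$, and those sharper statements are exactly what is needed later for the variation-of-parameters estimates. Some additional mechanism is required to decide which coefficients in the expansion vanish.

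The paper supplies that mechanism with two ingredients you do not have: the nondegeneracy result (Lemma \ref{fmode1nondegeneracy}, via Pacard--Rivi\`ere), which forbids a solution that is $O(r^{-1})$ at the origin and bounded at infinity from being anything other than a multiple of $(1/r,-w'/w)$; and a sign-preservation (maximum principle) lemma for the cooperative structure of the off-diagonal coupling $+\tfrac{2}{r^2}$ (Lemma \ref{nosignchangemode1}), which shows that the solution vanishing at the origin with both components positive keeps both components positive for all $r$. Since the polynomial mode $(r,-r^{-1})$ at infinity has components of \emph{opposite} signs while the exponential mode has components of the same sign, this forces $z_{3,1}$ onto the growing exponential; a symmetric argument pins the decaying exponential $z_{4,1}$ onto the $(r^{-3},r^{-3})$ mode at the origin, and $z_{2,1}$ is then fixed by elimination (after subtracting a multiple of $z_{3,1}$, which is harmless at the origin). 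I would also advise against the reduction-of-order route to $z_{2,1}$: as you yourself note, the scalar formula does not transfer to the coupled $2\times2$ system, and even if you extract a reduced equation from the Wronskian pairing, you are still left with precisely the connection problem above. Once the matching lemmas are in place, $z_{2,1}$ comes for free and no reduction of order is needed.
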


The proof of Proposition \ref{mode1kernel} proceeds by first classifying the possible behaviour of solutions as \(r\to0^{+}\) and as \(r\to\infty\). Concerning the behaviour as \(r\to0^{+}\), we have the following.

\begin{lemma}
\label{mode1kernelat0}
   The solution space of equation (\ref{fmode1systemhom}) is spanned, for small \(r>0\), by four functions with the asymptotic behaviour
    \begin{equation}
    \label{mode1kernelat0eq}
    \begin{aligned}[c]
    \begin{pmatrix}[1.3]
        r^{-1}+O(r)\\
        -r^{-1}+O(r)
    \end{pmatrix},\text{ as }r\to0^{+},\\
    \begin{pmatrix}[1.3]
        r+O(r^{3})\\
        r+O(r^{3})
    \end{pmatrix},\text{ as }r\to0^{+},
    \end{aligned}\quad\quad
    \begin{aligned}[c]
    \begin{pmatrix}[1.3]
        r^{-1}\log r+O(r\log r)\\
        -r^{-1}\log r+O(r\log r)
    \end{pmatrix},\text{ as }r\to0^{+},\\
    \begin{pmatrix}[1.3]
        r^{-3}+O(r^{-1})\\
        r^{-3}+O(r^{-1})
    \end{pmatrix},\text{ as }r\to0^{+}.
    \end{aligned}
\end{equation}
\end{lemma}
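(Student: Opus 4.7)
The plan is to treat (\ref{fmode1systemhom}) as a Fuchsian system at the regular singular point $r = 0$ and apply Frobenius theory. First I would expand the coefficients using the asymptotics $w(r) = \alpha r + O(r^3)$ from (\ref{wproperties}) (exploiting the fact that $w$ is odd, so both coefficient expansions involve only even powers of $r$): this gives $2w'/w + 1/r = 3/r + O(r)$ and $r^{-2} M(r) = r^{-2} M_0 + O(r^2)$, with $M_0 = \begin{pmatrix}1 & 2 \\ 2 & 1\end{pmatrix}$. So (\ref{fmode1systemhom}) is an analytic, even, perturbation of the Euler-type system $\varphi'' + (3/r)\varphi' - r^{-2} M_0 \varphi = 0$.

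The indicial equation is obtained from the ansatz $\varphi = r^\mu v$, giving $M_0 v = (\mu^2 + 2\mu) v$. Since $M_0$ has eigenvalue $3$ on $(1,1)^T$ and eigenvalue $-1$ on $(1,-1)^T$, the four indicial exponents are $\mu = 1$ and $\mu = -3$ (both with eigendirection $(1,1)^T$) together with $\mu = -1$ of algebraic multiplicity two (eigendirection $(1,-1)^T$). These four exponents with their leading eigenvectors match precisely the four behaviors asserted in (\ref{mode1kernelat0eq}).

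I would then build the four linearly independent solutions as Frobenius-type series in $r^2$, adjoining logarithmic terms only where forced by resonance. For $\mu = 1$: the recursion is nonsingular at every step and yields a pure power series starting at $r(1,1)^T$. For the double root $\mu = -1$: the first solution is exactly $z_{1,1}(r) = (1/r, -w'/w)$ from Lemma \ref{fmode1nondegeneracy}, since $-w'/w = -1/r + O(r)$ supplies the leading $r^{-1}(1,-1)^T$; the second is constructed by the standard double-root procedure $\varphi = \log r \cdot z_{1,1} + \psi$, with the free $r^{-1}$ component of $\psi$ cancelled by an appropriate multiple of $z_{1,1}$, leaving the stated leading $r^{-1}\log r \,(1,-1)^T + O(r\log r)$. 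For $\mu = -3$: the series starts at $r^{-3}(1,1)^T$ and encounters potential resonances at $n = 1$ (matching $\mu = -1$) and at $n = 2$ (matching $\mu = 1$); the first is solvable in pure power form because the induced forcing is orthogonal to $\ker(I + M_0) = \operatorname{span}(1,-1)^T$, while the second may force a log, but only at order $r\log r = o(r^{-1})$, which is absorbed in the asserted $O(r^{-1})$ remainder.

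The principal technical obstacle is the resonance bookkeeping for the $\mu = -3$ solution: at each resonant step, one must compute the inhomogeneous forcing explicitly, project it onto the kernel of the singular indicial operator, and track any induced logarithmic corrections. A clean way to organize the recursion is to diagonalize $M_0$ via the substitution $u_\pm = \varphi_1 \pm \varphi_2$, which decouples the leading Euler system into two scalar equations (with indicial exponents $1, -3$ on $u_+$ and the double root $-1, -1$ on $u_-$) and reduces the coupling between $u_+$ and $u_-$ to explicit $O(r^2)$ perturbative terms. Each Frobenius step then amounts to inverting a simple $2 \times 2$ indicial operator on the appropriate eigenspace, and the resonance analysis becomes a direct computation.
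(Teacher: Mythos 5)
Your proposal is correct and arrives at the same four model behaviours as the paper, but by a somewhat different route in its final step. The indicial computation you perform --- expanding the coefficients to get the Euler system \(\varphi''+\tfrac{3}{r}\varphi'-r^{-2}M_{0}\varphi=0\) with \(M_{0}=\left(\begin{smallmatrix}1&2\\2&1\end{smallmatrix}\right)\), diagonalizing along \((1,1)^{T}\) (eigenvalue \(3\), exponents \(\mu=1,-3\)) and \((1,-1)^{T}\) (eigenvalue \(-1\), double exponent \(\mu=-1\)) --- is exactly a systematic derivation of the four explicit Euler solutions the paper simply writes down. Where you diverge is in passing from the model to the full system: the paper invokes a contraction mapping argument in the style of Pacard--Rivi\`ere, which needs only the leading asymptotics \(w(r)=\alpha r+O(r^{3})\), whereas you construct genuine Frobenius series in powers of \(r^{2}\). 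Your route buys more (complete asymptotic expansions, and a precise account of exactly where logarithms can and cannot appear, e.g.\ your correct observation that the \(\mu=-3\) branch is log-free at the \(n=1\) resonance because the forcing lies along \((1,1)^{T}\), orthogonal to \(\ker(-I-M_{0})=\operatorname{span}(1,-1)^{T}\)), but it also costs more: it requires \(w\) to be odd and real-analytic at the origin so that the coefficients admit full even power series expansions. That fact is true (it follows from elliptic regularity and equivariance of \(W\)), but it is not among the properties (\ref{wproperties}) recorded in the paper, so you should either cite it or note that for the stated remainder orders \(O(r)\), \(O(r\log r)\), \(O(r^{3})\), \(O(r^{-1})\) a truncated expansion plus a perturbation (fixed-point) argument suffices --- which is precisely what the paper's contraction mapping step does. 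With that point addressed, your argument is a complete and in fact more informative proof of the lemma.
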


\begin{proof}
As \(r\to0^{+}\), problem (\ref{fmode1systemhom}) becomes asymptotic to the equation
\begin{equation}
\label{fmode1systemhomrto0}
\varphi''+\frac{3}{r}\varphi'-\frac{1}{r^{2}}\begin{pmatrix}
        1 & 2 \\
        2 & 1 
    \end{pmatrix}\varphi=0.
\end{equation}The four linearly independent solutions of (\ref{fmode1systemhomrto0}) are given by
\begin{equation*}
\begin{pmatrix}[1.3]
        r^{-1}\\
        -r^{-1}
    \end{pmatrix},\quad 
\begin{pmatrix}[1.3]
        r^{-1}\log r\\
        -r^{-1}\log r
    \end{pmatrix},\quad
\begin{pmatrix}[1.3]
        r\\
        r
    \end{pmatrix},\quad 
\begin{pmatrix}[1.3]
        r^{-3}\\
        r^{-3}
    \end{pmatrix},     
\end{equation*}as can be verified by direct calculation. A contraction mapping argument similar to that employed in \cite{pacardriviere2000}*{Section 3.3}, then gives four linearly independent solutions of (\ref{fmode1systemhom}) for small \(r>0\) which have the properties (\ref{mode1kernelat0eq}).
\end{proof}

The classification of solutions as \(r\to\infty\) reads as follows.

\begin{lemma}
\label{mode1kernelatinf}
    The solution space of equation (\ref{fmode1systemhom}) is spanned, for large \(r>0\), by four functions with the asymptotic behaviour
    \begin{equation}
    \begin{aligned}[c]
    \begin{pmatrix}[1.3]
        r^{-1}+O(r^{-3})\\
        -r^{-3}+O(r^{-5})
    \end{pmatrix},\text{ as }r\to\infty,\\
    \begin{pmatrix}[1.3]
        r+O(r^{-1})\\
        -r^{-1}+O(r^{-3})
    \end{pmatrix},\text{ as }r\to\infty,\\
    \begin{pmatrix}[1.3]
        r^{-5/2}e^{\sqrt{2}r}+O\big(r^{-7/2}e^{\sqrt{2}r}\big)\\
        r^{-1/2}e^{\sqrt{2}r}+O\big(r^{-3/2}e^{\sqrt{2}r}\big)
    \end{pmatrix},\text{ as }r\to\infty,\\
    \begin{pmatrix}[1.3]
        r^{-5/2}e^{-\sqrt{2}r}+O\big(r^{-7/2}e^{-\sqrt{2}r}\big)\\
        r^{-1/2}e^{-\sqrt{2}r}+O\big(r^{-3/2}e^{-\sqrt{2}r}\big)
    \end{pmatrix},\text{ as }r\to\infty.
    \end{aligned}
    \end{equation}

\begin{proof}
    As \(r\to\infty\), problem (\ref{fmode1systemhom}) becomes asymptotic to the coupled system of equations
    \begin{align}
        &\varphi_{1}''+\frac{1}{r}\varphi_{1}'-\frac{1}{r^{2}}\varphi_{1}-\frac{2}{r^{2}}\varphi_{2}=0,\label{proofrtoinfmode1hom1}\\
        &\varphi_{2}''+\frac{1}{r}\varphi_{2}'-\frac{2}{r^{2}}\varphi_{1}-2\varphi_{2}=0.\label{proofrtoinfmode1hom2}
    \end{align}Neglecting the \(\varphi_{2}\) term in (\ref{proofrtoinfmode1hom1}), we arrive at the equation \(\varphi_{1}''+\tfrac{1}{r}\varphi_{1}'-\tfrac{1}{r^{2}}\varphi_{1}=0\), whose solutions are spanned by the functions \(\varphi_{1}=r^{-1}\) and \(\varphi_{1}=r\). Substituting this ansatz for \(\varphi_{1}\) into (\ref{proofrtoinfmode1hom1})-(\ref{proofrtoinfmode1hom2}) and matching powers of \(r\) for \(\varphi_{2}\), we can construct two formal power series solutions to this system with asymptotic behaviour
    \begin{equation*}
        \varphi= \begin{pmatrix}[1.3]
        r^{-1}+O(r^{-3})\\
        -r^{-3}+O(r^{-5})
    \end{pmatrix},\quad\quad\varphi=\begin{pmatrix}[1.3]
        r+O(r^{-1})\\
        -r^{-1}+O(r^{-3})
    \end{pmatrix},
    \end{equation*}as \(r\to\infty\). Using a contraction mapping argument similar to that employed in \cite{pacardriviere2000}*{Section 3.3}, we can then find two linearly independent solutions of the original equation (\ref{fmode1systemhom}) for large \(r>0\) with these properties.

    Turning our attention to (\ref{proofrtoinfmode1hom2}) and neglecting the \(\varphi_{1}\) term, we arrive at the equation \(\varphi_{2}''+\tfrac{1}{r}\varphi_{2}'-2\varphi_{2}=0\). This is a Bessel-type problem whose solutions are spanned by two functions with the behaviour \(\varphi_{2}=O\big(r^{-1/2}e^{\sqrt{2}r}\big)\) and \(\varphi_{2}=O\big(r^{-1/2}e^{-\sqrt{2}r}\big)\) as \(r\to\infty\). Substituting this ansatz for \(\varphi_{2}\) into (\ref{proofrtoinfmode1hom1})-(\ref{proofrtoinfmode1hom2}) and matching powers of \(r\) for \(\varphi_{1}\), we find two generalized formal power series solutions to this system with asymptotic behaviour 
    \begin{equation*}
        \varphi=\begin{pmatrix}[1.3]
        r^{-5/2}e^{\sqrt{2}r}+O\big(r^{-7/2}e^{\sqrt{2}r}\big)\\
        r^{-1/2}e^{\sqrt{2}r}+O\big(r^{-3/2}e^{\sqrt{2}r}\big)
    \end{pmatrix},\quad\quad\varphi=\begin{pmatrix}[1.3]
        r^{-5/2}e^{-\sqrt{2}r}+O\big(r^{-7/2}e^{-\sqrt{2}r}\big)\\
        r^{-1/2}e^{-\sqrt{2}r}+O\big(r^{-3/2}e^{-\sqrt{2}r}\big)
    \end{pmatrix},
    \end{equation*}as \(r\to\infty\). Again a contraction mapping argument can be employed to obtain two linearly independent solutions of (\ref{fmode1systemhom}) for large \(r>0\) with these properties.
\end{proof}
\end{lemma}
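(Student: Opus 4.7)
The plan is to treat (\ref{fmode1systemhom}) as a perturbation of a simpler limit system and construct each of the four asymptotic solutions by a two-step procedure: first identify a leading-order formal behaviour, then upgrade it to a genuine solution via a contraction mapping argument on \([R,\infty)\) for \(R\) sufficiently large. Using \(w(r)=1+O(r^{-2})\) and \(w'(r)=O(r^{-3})\) from (\ref{wproperties}), the coefficient matrix of (\ref{fmode1systemhom}) differs from the limiting one by terms of order \(r^{-2}\) or smaller, so the dominant behaviour is governed by the reduced system
\begin{equation*}
\varphi_{1}''+\frac{1}{r}\varphi_{1}'-\frac{1}{r^{2}}\varphi_{1}-\frac{2}{r^{2}}\varphi_{2}=0,\qquad \varphi_{2}''+\frac{1}{r}\varphi_{2}'-\frac{2}{r^{2}}\varphi_{1}-2\varphi_{2}=0.
\end{equation*}

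I would then split the analysis into \emph{algebraic} and \emph{exponential} modes according to which component dominates. For algebraic modes, dropping the \(\varphi_{2}\) coupling in the first equation yields the Euler equation \(\varphi_{1}''+r^{-1}\varphi_{1}'-r^{-2}\varphi_{1}=0\) with basis \(\{r,r^{-1}\}\); substituting each choice back into the second equation, where the term \(-2\varphi_{2}\) now dominates balance, gives the leading behaviours \(\varphi_{2}\sim -r^{-1}\) and \(\varphi_{2}\sim -r^{-3}\) respectively. Iterating in powers of \(r^{-2}\) produces formal power series with precisely the structure asserted. For exponential modes the roles are reversed: neglecting \(\varphi_{1}\) in the second equation leaves a Bessel-type equation whose solutions behave like \(r^{-1/2}e^{\pm\sqrt{2}r}\); inserting this ansatz into the first equation with \(-2r^{-2}\varphi_{2}\) as forcing forces the balance \(\varphi_{1}''\approx 2r^{-2}\varphi_{2}\), which integrates to \(\varphi_{1}\sim r^{-5/2}e^{\pm\sqrt{2}r}\). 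A second iteration in inverse powers of \(r\) then gives the claimed formal generalized power series.

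The main obstacle is converting each formal expansion \(\varphi_{\mathrm{formal}}\) into a genuine solution of the full system (\ref{fmode1systemhom}). Following the scheme used in Lemma \ref{mode1kernelat0} and attributed to \cite{pacardriviere2000}, I would write \(\varphi=\varphi_{\mathrm{formal}}+\eta\), derive an integral equation for \(\eta\) by inverting the limiting operator using variation of parameters against the four leading-order profiles already identified, and show that the associated integral operator is a contraction on a suitable weighted space of vector-valued functions on \([R,\infty)\). The principal delicacy lies in the choice of integration endpoints and weights, which must be arranged so that the Green's function does not reintroduce admixtures of a faster-growing mode: typically one integrates from \(\infty\) against the decaying solutions and from a fixed base point against the growing ones. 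Linear independence of the four resulting solutions then follows from the distinctness of their leading asymptotic behaviours.
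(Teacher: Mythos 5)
Your proposal is correct and follows essentially the same route as the paper: pass to the limiting coupled system using the decay of \(1-w^{2}\) and \(w'\), generate the two algebraic modes from the Euler equation \(\varphi_{1}''+r^{-1}\varphi_{1}'-r^{-2}\varphi_{1}=0\) with \(\varphi_{2}\) slaved through the \(-2\varphi_{2}\) term, generate the two exponential modes from the Bessel-type equation for \(\varphi_{2}\) with \(\varphi_{1}\) slaved through the \(2r^{-2}\varphi_{2}\) forcing, and upgrade the formal expansions to genuine solutions by a contraction mapping in the spirit of Pacard--Rivi\`ere. Your leading-order balances and signs all match the paper's, and your extra remarks on setting up the fixed-point argument (weights and integration endpoints) only flesh out the step the paper delegates to the citation.
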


The next step is to match the possible behaviour of solutions as \(r\to0^{+}\) with the possible behaviour of solutions as \(r\to\infty\). The following maximum principle type result is crucial.

\begin{lemma}
\label{nosignchangemode1}
    Let \(\varphi=(\varphi_{1},\varphi_{2})\) be a solution of (\ref{fmode1systemhom}).
    \begin{enumerate}[(i)]
        \item If \(\varphi_{1},\varphi_{2}\to0\) as \(r\to0^{+}\) and \(\varphi_{1}(r),\varphi_{2}(r)>0\) for small \(r>0\), then \(\varphi_{1}(r),\varphi_{2}(r)>0\) for all \(r\in(0,\infty)\).
        \item If \(\varphi_{1},\varphi_{2}\to0\) as \(r\to\infty\) and \(\varphi_{1}(r),\varphi_{2}(r)>0\) for large \(r>0\), then \(\varphi_{1}(r),\varphi_{2}(r)>0\) for all \(r\in(0,\infty)\).
    \end{enumerate}
\end{lemma}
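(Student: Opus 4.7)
The plan is to exploit the fact that the system (\ref{fmode1systemhom}) is cooperative on the positive cone: the off-diagonal entries of the coefficient matrix are strictly positive ($2/r^{2}$), while the diagonal zeroth-order coefficients $-1/r^{2}$ and $-(1/r^{2}+2w^{2})$ are strictly negative. This will let me read each scalar component of the system as a second-order linear ODE with a nonnegative right-hand side and a negative zeroth-order coefficient — precisely the setup for the classical scalar maximum principle, which forbids strict positive interior maxima.

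For part (i) I will argue by contradiction. Assume the conclusion fails and set
\[
R := \inf\{r > 0 : \varphi_{1}(r) \leq 0 \text{ or } \varphi_{2}(r) \leq 0\},
\]
which lies in $(0,\infty)$ under the hypothesis. Continuity then gives $\varphi_{1}, \varphi_{2} > 0$ on $(0,R)$ and, by definition of $R$, at least one of $\varphi_{1}(R), \varphi_{2}(R)$ equals zero. Without loss of generality $\varphi_{1}(R)=0$; the symmetric case $\varphi_{2}(R)=0$ is handled identically using the second scalar equation. Combined with $\varphi_{1}(r)\to 0$ as $r\to 0^{+}$, this forces $\varphi_{1}$ to attain a positive interior maximum at some $r^{*} \in (0,R)$. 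Evaluating the first scalar equation of (\ref{fmode1systemhom}) at $r^{*}$, where $\varphi_{1}'(r^{*})=0$, yields
\[
\varphi_{1}''(r^{*}) = \frac{1}{r^{*\,2}}\varphi_{1}(r^{*}) + \frac{2}{r^{*\,2}}\varphi_{2}(r^{*}) > 0,
\]
contradicting $\varphi_{1}''(r^{*}) \leq 0$. In the symmetric case one applies the analogous argument at the maximum of $\varphi_{2}$ via the second equation, obtaining $\varphi_{2}''(r^{*}) = \tfrac{2}{r^{*\,2}}\varphi_{1}(r^{*}) + \bigl(\tfrac{1}{r^{*\,2}}+2w(r^{*})^{2}\bigr)\varphi_{2}(r^{*}) > 0$, again a contradiction.

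For part (ii) I will run the mirror-image scheme on the unbounded interval $(R,\infty)$, where now
\[
R := \sup\{r > 0 : \varphi_{1}(r) \leq 0 \text{ or } \varphi_{2}(r) \leq 0\},
\]
which is finite by the positivity hypothesis at infinity. The decay $\varphi_{1}, \varphi_{2} \to 0$ as $r \to \infty$, combined with (say) $\varphi_{1}(R)=0$ and $\varphi_{1}>0$ on $(R,\infty)$, guarantees via a straightforward compactness argument on the sublevel sets $\{\varphi_{1} \geq \tfrac{1}{2}\sup \varphi_{1}\}$ that $\varphi_{1}$ attains a positive maximum at a finite interior point of $(R,\infty)$; the same pointwise ODE computation then delivers the contradiction.

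I do not expect a serious obstacle once the cooperative structure is identified. The only mild subtlety is the scenario in which $\varphi_{1}$ and $\varphi_{2}$ both vanish at the boundary point $R$, but this is absorbed by the same argument: at least one of them must still admit a positive interior maximum on the open interval, by the strict positivity hypothesis there, and the pointwise contradiction is obtained from the scalar equation associated with that component.
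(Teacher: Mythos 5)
Your proposal is correct and follows essentially the same argument as the paper: both exploit the cooperative (positive off-diagonal) structure of the system to locate a strictly positive interior maximum of one component at a point where the other component is still positive, and then contradict the corresponding scalar equation of (\ref{fmode1systemhom}) at that point. Your introduction of the first crossing point \(R\) merely makes explicit the step the paper states informally, so no further comment is needed.
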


\begin{proof}
    Consider a solution \(\varphi=(\varphi_{1},\varphi_{2})\) of (\ref{fmode1systemhom}) satisfying the assumptions of \((i)\). If \(\varphi_{1}\) or \(\varphi_{2}\) vanish at some interior point in \((0,\infty)\), then we can find a strictly positive local maximum for \(\varphi_{1}\) or \(\varphi_{2}\) such that the other component is also strictly positive at this point. In the first case this implies the existence of an interior point where
    \begin{equation*}
        \varphi_{1}^{''}\leq0,\quad\varphi_{1}^{'}=0,\quad\varphi_{1}>0,\quad\varphi_{2}>0.
    \end{equation*}However, the first equation of (\ref{fmode1systemhom}) reads
    \begin{equation*}
        \varphi_{1}''+\left(2\frac{w'}{w}+\frac{1}{r}\right)\varphi_{1}'-\frac{1}{r^{2}}\varphi_{1}=\frac{2}{r^{2}}\varphi_{2}>0,
    \end{equation*}at this point, which is a contradiction. In the second case we find an interior point where
    \begin{equation*}
        \varphi_{2}^{''}\leq0,\quad\varphi_{2}^{'}=0,\quad\varphi_{2}>0,\quad\varphi_{1}>0.
    \end{equation*}However, the second equation of (\ref{fmode1systemhom}) reads
    \begin{equation*}
    \varphi_{2}''+\left(2\frac{w'}{w}+\frac{1}{r}\right)\varphi_{2}'-\frac{1}{r^{2}}\varphi_{2}-2w^{2}\varphi_{2}=\frac{2}{r^{2}}\varphi_{1}>0,
    \end{equation*}at this point, which is again a contradiction. This completes the proof of \((i)\), and the proof of \((ii)\) is similar.
\end{proof}

We can now complete the classification of solutions for the homogeneous problem.

\begin{proof}[Proof of Proposition \ref{mode1kernel}]
    One solution \(z_{1,1}(r)=(1/r,-w'/w)\) of equation (\ref{fmode1systemhom}) is already known, as follows from Lemma \ref{fmode1nondegeneracy}. Using the properties (\ref{wproperties}) of \(w\), we see that \(z_{1,1}(r)\) has the asymptotic behaviour stated.
    
    Next, let \(z_{3,1}(r)\) be a solution of the homogeneous problem from Lemma \ref{mode1kernelat0} with the behaviour
    \begin{equation*}
        z_{3,1}(r)=\begin{pmatrix}[1.3]
    r+O(r^{3})\\
    r+O(r^{3})
    \end{pmatrix},
    \end{equation*}for small \(r>0\). By standard ODE theory \(z_{3,1}(r)\) extends to a solution of (\ref{fmode1systemhom}) defined for all \(r\in(0,\infty)\), and Lemma \ref{fmode1nondegeneracy} implies that \(z_{3,1}(r)\) must become unbounded as \(r\to\infty\). By Lemma \ref{nosignchangemode1}, the solution \(z_{3,1}(r)\) must have components of the same sign as \(r\to\infty\). It follows from Lemma \ref{mode1kernelatinf} that 
    \begin{equation*}
        z_{3,1}(r)=c\begin{pmatrix}[1.3]
        r^{-5/2}e^{\sqrt{2}r}+O\big(r^{-7/2}e^{\sqrt{2}r}\big)\\
        r^{-1/2}e^{\sqrt{2}r}+O\big(r^{-3/2}e^{\sqrt{2}r}\big)
    \end{pmatrix},\text{ as }r\to\infty,
    \end{equation*}for some constant \(c>0\). 

    Now let us consider a solution \(z_{4,1}(r)\) of the homogeneous problem from Lemma \ref{mode1kernelatinf} with the behaviour
    \begin{equation*}
        z_{4,1}(r)=\begin{pmatrix}[1.3]
        r^{-5/2}e^{-\sqrt{2}r}+O\big(r^{-7/2}e^{-\sqrt{2}r}\big)\\
        r^{-1/2}e^{-\sqrt{2}r}+O\big(r^{-3/2}e^{-\sqrt{2}r}\big)
    \end{pmatrix},\text{ as }r\to\infty.
    \end{equation*}The function \(z_{4,1}(r)\) extends to a solution of (\ref{fmode1systemhom}) defined on \((0,\infty)\), and by Lemma \ref{fmode1nondegeneracy} this solution must blow up faster than \(O(r^{-1})\) as \(r\to0^{+}\). By Lemma \ref{nosignchangemode1}, the components of \(z_{4,1}(r)\) have the same sign as \(r\to0^{+}\). It follows from Lemma \ref{mode1kernelat0} that \(z_{4,1}(r)\) has the behaviour
    \begin{equation*}
        z_{4,1}(r)=c\begin{pmatrix}[1.3]
       r^{-3}+O(r^{-1})\\
        r^{-3}+O(r^{-1})
    \end{pmatrix},\text{ as }r\to0^{+},
    \end{equation*}for some constant \(c>0\).

    Finally, let \(z_{2,1}(r)\) be a solution of the homogeneous problem from Lemma \ref{mode1kernelat0} with the behaviour
    \begin{equation*}
    z_{2,1}(r)=\begin{pmatrix}[1.3]
        r^{-1}\log r+O(r\log r)\\
        -r^{-1}\log r+O(r\log r)
    \end{pmatrix},
    \end{equation*}for small \(r>0\). The function \(z_{2,1}(r)\) must have the remaining behaviour at infinity described in Lemma \ref{mode1kernelatinf}, namely
    \begin{equation*}
        z_{2,1}(r)=c\begin{pmatrix}[1.3]
       r+O(r^{-1})\\
        -r^{-1}+O(r^{-3})
    \end{pmatrix},\text{ as }r\to\infty.
    \end{equation*}
\end{proof}

\subsection{Solutions of the inhomogeneous problem: Fourier mode \texorpdfstring{\(k=1\)}{k=1}} 
\label{solinhomproblemfmode1}
In this subsection we obtain a representation formula for solutions of the inhomogeneous problem (\ref{fmode1system}), and complete the proof of Proposition \ref{fouriermode1prop} and Corollary \ref{fouriermode1cor}. We use the notation \(a\cdot b=a_{1}b_{1}+a_{2}b_{2}\) for the usual dot product in \(\mathbb{R}^{2}\).

\begin{proposition}
 \label{mode1repformula}  
    Let \(\varphi=(\varphi_{1},\varphi_{2})\) and \(\widetilde{h}=(\widetilde{h}_{1},\widetilde{h}_{2})\). The general solution of the inhomogeneous problem (\ref{fmode1system}) can be written in the form
    \begin{equation*}
\begin{split}
    \varphi(r)=&\quad\left(\int w(s)^{2}s\:\widetilde{h}(s)\cdot z_{2,1}(s)\:ds\right)z_{1,1}(r)\\
    &-\left(\int w(s)^{2}s\:\widetilde{h}(s)\cdot z_{1,1}(s)\:ds\right)z_{2,1}(r)\\
    &+\left(\int w(s)^{2}s\:\widetilde{h}(s)\cdot z_{4,1}(s)\:ds\right)z_{3,1}(r)\\
    &-\left(\int w(s)^{2}s\:\widetilde{h}(s)\cdot z_{3,1}(s)\:ds\right)z_{4,1}(r),
\end{split}
\end{equation*}where \(z_{1,1},\ldots, z_{4,1}\) denote the four linearly independent solutions of the homogeneous problem described in Proposition \ref{mode1kernel}, and the symbols \(\int\) denote arbitrary antiderivatives.
\end{proposition}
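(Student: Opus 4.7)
The plan is to apply variation of parameters using the four homogeneous solutions \(z_{1,1}, \ldots, z_{4,1}\) of Proposition~\ref{mode1kernel}. Writing
\[
\varphi(r) = \sum_{i=1}^{4} A_i(r)\, z_{i,1}(r),
\]
with the standard constraint \(\sum_{i} A_i'(r)\, z_{i,1}(r) = 0\), the inhomogeneous ODE~(\ref{fmode1system}) reduces to \(\sum_{i} A_i'(r)\, z_{i,1}'(r) = \widetilde{h}(r)\). Together, these give a \(4 \times 4\) linear system for the unknowns \(A_1', A_2', A_3', A_4'\).

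The key tool to invert this system is a vector Wronskian identity. For any two homogeneous solutions \(f, g\) of~(\ref{fmode1systemhom}), the symmetry of the coefficient matrix (so that \(f \cdot Qg = Qf \cdot g\)) together with the identity \((r w(r)^{2})' = r w(r)^{2} \bigl(2 w'/w + 1/r\bigr)\) gives
\[
\frac{d}{dr}\Bigl[\,r\, w(r)^{2}\bigl(f(r) \cdot g'(r) - f'(r) \cdot g(r)\bigr)\Bigr] = 0.
\]
Hence the scalars \(c_{ij} := r w(r)^{2}\bigl(z_{i,1} \cdot z_{j,1}' - z_{i,1}' \cdot z_{j,1}\bigr)\) are constant and form a skew-symmetric \(4 \times 4\) matrix. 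Pairing the constraint \(\sum_{i} A_i' z_{i,1} = 0\) with \(z_{j,1}'\), and the reduced ODE \(\sum_{i} A_i' z_{i,1}' = \widetilde{h}\) with \(z_{j,1}\), then subtracting, produces
\[
\sum_{i=1}^{4} A_i'(r)\, c_{ij} \;=\; -\, r\, w(r)^{2}\, \widetilde{h}(r) \cdot z_{j,1}(r), \qquad j = 1, 2, 3, 4.
\]

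I would then use the asymptotic expansions from Lemma~\ref{mode1kernelat0} and Lemma~\ref{mode1kernelatinf} to identify the structure of \((c_{ij})\). Evaluating at \(r \to 0^{+}\), where the computation is most transparent, the solutions \(z_{1,1}, z_{2,1}\) have leading terms along \((1,-1)^{T}\) while \(z_{3,1}, z_{4,1}\) have leading terms along \((1,1)^{T}\); the four cross Wronskians \(c_{13}, c_{14}, c_{23}, c_{24}\) thus cancel to leading order and, after multiplication by \(r w^{2} \sim r^{3}\), the residual vanishes in the limit, forcing them to be zero. In contrast, \(c_{12}\) and \(c_{34}\) are shown to be nonzero by a direct computation of leading terms (at \(r \to 0^{+}\) and \(r \to \infty\) respectively). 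Hence \((c_{ij})\) is block diagonal in two \(2 \times 2\) skew blocks, and the \(4 \times 4\) system decouples into the pairs \((j = 1,2)\) and \((j = 3,4)\). Explicitly,
\[
A_1' = -\frac{r w^{2}\, \widetilde{h} \cdot z_{2,1}}{c_{12}}, \quad A_2' = \frac{r w^{2}\, \widetilde{h} \cdot z_{1,1}}{c_{12}}, \quad A_3' = -\frac{r w^{2}\, \widetilde{h} \cdot z_{4,1}}{c_{34}}, \quad A_4' = \frac{r w^{2}\, \widetilde{h} \cdot z_{3,1}}{c_{34}}.
\]
Integrating and absorbing the nonzero factors \(c_{12}, c_{34}\) into a rescaling of \(z_{2,1}, z_{4,1}\) (permitted since Proposition~\ref{mode1kernel} characterizes these solutions only up to multiplicative constants) yields the stated formula. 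The four additive constants of integration from the antiderivatives parametrize the entire four-dimensional kernel of the homogeneous problem, so the representation sweeps out the general solution.

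The main obstacle is establishing the block-diagonal structure of \((c_{ij})\). Since each \(c_{ij}\) is a constant, it suffices to evaluate it in one convenient limit, but the cross pairings vanish to leading order, and one must track subleading contributions in the expansions of Lemmas~\ref{mode1kernelat0}--\ref{mode1kernelatinf} and verify that the remaining part is negligible after multiplication by \(r w(r)^{2}\). The symmetric/antisymmetric sector decoupling at \(r = 0\) is only asymptotic (the \(2 w^{2} r^{2}\) entry in \(Q\) breaks the exact symmetry), so careful asymptotic bookkeeping is required; once the constant value of each cross Wronskian is shown to vanish in one regime, it is identically zero.
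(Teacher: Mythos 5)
Your argument is correct in substance but follows a genuinely different route from the paper. The paper sets up the same variation-of-parameters system and inverts it by Cramer's rule: it computes \(\det A=\kappa/(w^{4}r^{2})\) via Liouville's formula, writes each \(\det A_{j}\) as \((a\cdot\widetilde{h})/(w^{2}r)\) with \(a\) built from \(3\times3\) minors, verifies by a ``lengthy but straightforward calculation'' that \(a\) solves the homogeneous system, and then identifies \(a\) with one of \(z_{1,1},\dots,z_{4,1}\) by matching asymptotics at \(0\) and \(\infty\). You instead exploit the self-adjoint structure directly: since \(Q\) is symmetric and \((rw^{2})'=rw^{2}(2w'/w+1/r)\), the bilinear concomitants \(c_{ij}=rw^{2}(z_{i,1}\cdot z_{j,1}'-z_{i,1}'\cdot z_{j,1})\) are constants, the linear system for the \(A_{i}'\) becomes \(\sum_{i}A_{i}'c_{ij}=-rw^{2}\,\widetilde{h}\cdot z_{j,1}\), and the whole content of the proposition reduces to showing that the skew matrix \((c_{ij})\) is block-diagonal with \(c_{12},c_{34}\neq0\). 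This buys you a cleaner mechanism: the vanishing of the cross pairings and the nonvanishing of \(c_{12},c_{34}\) are each checked by evaluating a \emph{constant} in whichever limit is convenient, replacing the paper's minor-by-minor determinant analysis; the final rescaling of \(z_{2,1}\) and \(z_{4,1}\) to absorb \(c_{12},c_{34}\) plays exactly the role of the paper's normalization \(\kappa_{1}=\kappa_{2}=1\), and the four integration constants sweep out the kernel as you say.

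One point needs tightening: you claim all four cross Wronskians can be killed by the \(r\to0^{+}\) evaluation. That works for \(c_{13}\) and \(c_{23}\) (the residual after the \((1,-1)\)/\((1,1)\) cancellation is \(O(r)\) resp.\ \(O(r\log r)\), so \(rw^{2}\sim r^{3}\) forces the constant to zero), but for \(c_{14}\) and \(c_{24}\) the error terms of Lemma \ref{mode1kernelat0} only give \(rw^{2}\cdot O(r^{-3})=O(1)\) and \(O(\log r)\), which does not force the constant to vanish. Those two should instead be evaluated as \(r\to\infty\), where the exponential decay of \(z_{4,1}\) against the at most polynomial growth of \(z_{1,1},z_{2,1}\) makes them trivially zero. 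With that adjustment (which you partly anticipate in your closing caveat), the block-diagonal structure and hence the representation formula follow.
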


\begin{proof}
Let \(z_{j,1}=(z_{j1,1},z_{j2,1})\) denote the two components of \(z_{j,1}\). The variation of parameters formula gives the general solution of (\ref{fmode1system}) in the form 
\begin{equation*}
    \varphi(r)=\sum_{j=1}^{4}c_{j}(r)z_{j,1}(r),
\end{equation*}where \(c_{j}(r)\) denote real-valued functions satisfying the equations
    \begin{equation*}
        \sum_{j=1}^{4}c_{j}'(r)z_{j,1}(r)=0,\quad\quad\sum_{j=1}^{4}c_{j}'(r)z_{j,1}'(r)=\widetilde{h}.
    \end{equation*}
By Cramer's rule, the unique solution of this system is given by
    \begin{equation*}
        c_{j}'(r)=\frac{\det A_{j}}{\det A},\quad\quad j=1,\ldots,4,
    \end{equation*}where \(A\) denotes the \(4\times4\) matrix whose \(j\)th column is given by the vector
    \begin{equation*}
        \big(z_{j1,1},z_{j2,1},z_{j1,1}',z_{j2,1}'\big),
    \end{equation*}
    and \(A_{j}\) denotes the \(4\times4\) matrix formed by replacing the \(j\)th column of \(A\) with the vector 
    \begin{equation*}
        \big(0,0,\widetilde{h}_{1},\widetilde{h}_{2}\big).
    \end{equation*}

     Writing problem (\ref{fmode1systemhom}) as a first order system in four variables and using Liouville's formula for the Wronskian, we have 
        \(\det A=\kappa/ (w(r)^{4}r^{2})\) for some constant \(\kappa\neq0\).

    We now intend to find a convenient expression for \(\det A_{j}\). Focusing first on \(\det A_{1}\), we can write this determinant in the form \(\det A_{1}=(a\cdot\widetilde{h})/(w(r)^{2}r)\) where \(a:=(a_{1},a_{2})\),
    \begin{equation*}
        a_{1}(r):=w(r)^{2}r\begin{vmatrix}
    z_{21,1} & z_{31,1} & z_{41,1} \\
    z_{22,1} & z_{32,1} & z_{42,1} \\
    z_{22,1}' & z_{32,1}' & z_{42,1}'
    \end{vmatrix},\quad\quad a_{2}(r)=-w(r)^{2}r\begin{vmatrix}
    z_{21,1} & z_{31,1} & z_{41,1} \\
    z_{22,1} & z_{32,1} & z_{42,1} \\
    z_{21,1}' & z_{31,1}' & z_{41,1}'
    \end{vmatrix}.
    \end{equation*}A lengthy but straightforward calculation shows that \(a=(a_{1},a_{2})\) is a solution of the homogeneous problem (\ref{fmode1systemhom}), thus this function must be a linear combination of \(z_{1,1},\ldots,z_{4,1}\). Moreover, using the properties (\ref{mode1kernelasymprop}) of \(z_{1,1},\ldots,z_{4,1}\) described in Proposition \ref{mode1kernel}, and their natural extension to the first derivatives, we find 
    \begin{equation*}
     a=\begin{pmatrix}[1.3]
        O(r^{-1}\log r)\\
        O(r^{-1}\log r)
    \end{pmatrix},\text{ as }r\to0^{+}, \quad\quad a=\begin{pmatrix}[1.3]
        O(r)\\
        O(r^{-1})
    \end{pmatrix},\text{ as }r\to\infty.
    \end{equation*}It follows that \(a=(a_{1},a_{2})\) must be a multiple of \(z_{2,1}\). Hence,
    \begin{equation*}
        c_{1}'(r)=\frac{\det A_{1}}{\det A}=\kappa_{1}w(r)^{2}r\:\widetilde{h}(r)\cdot z_{2,1}(r)
    \end{equation*}for some constant \(\kappa_{1}\neq0\). Assuming that \(z_{1,1},\ldots,z_{4,1}\) are initially normalized to have the exact behaviour at infinity described in Lemma \ref{mode1kernelatinf}, we can carry out the same analysis for the remaining determinants to find 
    \begin{align*}
        c_{2}'(r)&=-\kappa_{1}w(r)^{2}r\:\widetilde{h}(r)\cdot z_{1,1}(r)\\
        c_{3}'(r)&=\kappa_{2}w(r)^{2}r\:\widetilde{h}(r)\cdot z_{4,1}(r)\\
        c_{4}'(r)&=-\kappa_{2}w(r)^{2}r\:\widetilde{h}(r)\cdot z_{3,1}(r),
    \end{align*}for another constant \(\kappa_{2}\neq0\). Finally, a suitable renormalization of \(z_{2,1}\) and \(z_{4,1}\) ensures that \(\kappa_{1}=\kappa_{2}=1\). 
\end{proof}    

\begin{proof}[Proof of Proposition \ref{fouriermode1prop}]
    It suffices to solve equation (\ref{fmode1system}) where \(\varphi=\big(\psi_{11}^{1},\psi_{12}^{1}\big)\) and \(\widetilde{h}=\big(h_{11}^{1},h_{12}^{1}\big)\). 
    By Proposition \ref{mode1repformula}, a particular solution of this problem can be built through the following formula
    \begin{equation}
    \label{mode1partsol}
    \begin{split}
    \begin{pmatrix}[1.3]
        \psi_{11}^{1}(r)\\
        \psi_{12}^{1}(r)
    \end{pmatrix}=&\quad\left(\int_{0}^{r} w(s)^{2}s\:\widetilde{h}(s)\cdot z_{2,1}(s)\:ds\right)z_{1,1}(r)\\
    &-\left(\int_{0}^{r} w(s)^{2}s\:\widetilde{h}(s)\cdot z_{1,1}(s)\:ds\right)z_{2,1}(r)\\
    &-\left(\int_{r}^{\infty} w(s)^{2}s\:\widetilde{h}(s)\cdot z_{4,1}(s)\:ds\right)z_{3,1}(r)\\
    &-\left(\int_{0}^{r} w(s)^{2}s\:\widetilde{h}(s)\cdot z_{3,1}(s)\:ds\right)z_{4,1}(r).
\end{split}
\end{equation}It is then directly verified, using (\ref{mode1kernelasymprop}), that the estimate 
\begin{equation}
\label{mode1partsolest0}
    \abs{\psi_{11}^{1}(r)}+\abs{\psi_{12}^{1}(r)}\:\leq\: Cr\abs{\log r}\:\norm{h}_{**}
\end{equation}holds for \(r\leq 1/2\), where \(h\) is given by (\ref{hmode1v1}). Concerning the behaviour of (\ref{mode1partsol}) as \(r\to\infty\), note that the orthogonality condition \(\left\langle h,\frac{\partial W}{\partial x_{2}} \right\rangle=0\) reads, in terms of \(\widetilde{h}\), as
\begin{equation*}
    \int_{0}^{\infty}w(s)^{2}s\:\widetilde{h}(s)\cdot z_{1,1}(s)\:ds=0.
\end{equation*}This allows us to exchange the integral in the second term of (\ref{mode1partsol}) with an integral from \(r\) to \(\infty\), and it is then straightforward to check that the estimate
\begin{equation}
\label{mode1partsolestinf}
    \abs{\psi_{11}^{1}(r)}+\abs{\psi_{12}^{1}(r)}\:\leq\: C\norm{h}_{**}
\end{equation}holds for \(r\geq1/2\). The proof of the proposition is complete.
\end{proof}

\begin{proof}[Proof of Corollary \ref{fouriermode1cor}] For \(h\) of the form (\ref{hmode1v2}) and \(\phi\) of the form (\ref{phimode1v2}), problem (\ref{linearizedeq}) reduces to the ODE system (\ref{fmode1system}) where \(\varphi=\big(\psi_{11}^{2},-\psi_{12}^{2}\big)\) and \(\widetilde{h}=\big(h_{11}^{2},-h_{12}^{2}\big)\). Moreover, the orthogonality condition \(\left\langle h,\frac{\partial W}{\partial x_{1}} \right\rangle=0\) for (\ref{hmode1v2}) reads
\begin{equation*}
    \int_{0}^{\infty}w(s)^{2}s\:\widetilde{h}(s)\cdot z_{1,1}(s)\:ds=0,\quad\quad\widetilde{h}=\big(h_{11}^{2},-h_{12}^{2}\big).
\end{equation*}The result then follows by applying the representation formula (\ref{mode1partsol}) to \((\psi_{11}^{2},-\psi_{12}^{2}\big)\) with right-hand side \(\widetilde{h}=\big(h_{11}^{2},-h_{12}^{2}\big)\).
\end{proof}

\begin{remark}
    The proofs above show that, in the Fourier mode \(k=1\) case, we actually have a stronger estimate than (\ref{mainestimate}). In particular, we have estimates (\ref{mode1partsolest0}), (\ref{mode1partsolestinf}) and similar for \(\psi_{11}^{2}\), \(\psi_{12}^{2}\). If \(\widetilde{h}=\big(h_{11}^{1},h_{12}^{1}\big)\) is not assumed to satisfy any orthogonality condition, it is directly verified that (\ref{mode1partsolestinf}) is replaced by the estimate
    \begin{equation*}
       \abs{\psi_{11}^{1}(r)}\leq Cr\norm{h}_{**},\quad\abs{\psi_{12}^{1}(r)}\leq C\norm{h}_{**}
    \end{equation*}for \(r\geq1/2\). In a similar manner, we have the same estimate for \((\psi_{11}^{2},\psi_{12}^{2}\big)\) if \(\big(h_{11}^{2},h_{12}^{2}\big)\) does not satisfy any orthogonality condition. This justifies the weaker estimate stated in Remark \ref{noorthogconds} following the main theorem.
\end{remark}

\section{Fourier modes \texorpdfstring{\(k\geq2\)}{k>=2}}

The aim of this section is to prove Theorem \ref{mainthm} for each Fourier mode \(k\geq2\). We have the following results.

\begin{proposition}
\label{fouriermodegeq2prop}
Let \(k\geq2\). There exists a constant \(C_{k}>0\) such that for any 
\begin{equation}
\label{hmodegeq2v1}
    h=iWh_{k}^{1},\quad\quad h_{k}^{1}=h_{k1}^{1}(r)\cos k\theta+ih_{k2}^{1}(r)\sin k\theta,
\end{equation}
satisfying \(\norm{h}_{**}<\infty\), equation (\ref{linearizedeq}) has a solution 
\begin{equation}
\label{phimodegeq2v1}
    \phi=iW\psi_{k}^{1},\quad\quad \psi_{k}^{1}=\psi_{k1}^{1}(r)\cos k\theta+i\psi_{k2}^{1}(r)\sin k\theta,
\end{equation}such that 
\begin{equation}
\label{fouriermodegeq2estv1}
    \norm{\phi}_{*}\leq C_{k}\norm{h}_{**}.
\end{equation}
\end{proposition}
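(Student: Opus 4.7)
The plan is to mirror the three-step approach of Section 3: first classify the kernel of the ODE system (\ref{fmodesystem}) for $k\geq 2$, then derive an analogue of the representation formula in Proposition \ref{mode1repformula}, and finally apply it to obtain the estimate. As $r\to 0^{+}$ the system becomes asymptotic to
\[
\varphi''+\frac{3}{r}\varphi'-\frac{1}{r^{2}}\begin{pmatrix} k^{2} & 2k \\ 2k & k^{2} \end{pmatrix}\varphi=0,
\]
whose constant matrix has eigenvalues $k^{2}\pm 2k$ with eigenvectors $(1,\pm 1)$. The indicial equation $p^{2}+2p=\lambda$ produces four power-law solutions with exponents $k,\ k-2,\ -k,\ -k-2$, all distinct for $k\geq 2$, and a contraction mapping argument as in \cite{pacardriviere2000}*{Section 3.3} lifts these to a basis of (\ref{fmodesystem}) for small $r$. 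As $r\to\infty$ the system decouples asymptotically into a power equation for $\varphi_{1}$ with solutions $r^{\pm k}$ and a Bessel-type equation for $\varphi_{2}$ with solutions $O(r^{-1/2}e^{\pm\sqrt{2}r})$; matching powers in the coupling terms determines the subordinate components, and a further contraction step supplies four linearly independent solutions at infinity with the corresponding expansions.

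Next I would extend Lemma \ref{nosignchangemode1} to $k\geq 2$: at a strictly positive interior maximum of $\varphi_{1}$ with $\varphi_{2}>0$ the first row of (\ref{fmodesystem}) forces $\varphi_{1}''=\tfrac{k^{2}}{r^{2}}\varphi_{1}+\tfrac{2k}{r^{2}}\varphi_{2}>0$, a contradiction, and analogously for $\varphi_{2}$. The Pacard--Rivi\`ere nondegeneracy statement then rules out any solution of (\ref{fmodesystem}) that is simultaneously $O(1/r)$ at the origin and bounded at infinity (since such a $\varphi$ would give a bounded mode-$k$ kernel element of $L$). Combining these two ingredients with the asymptotic bases from the previous step, and arguing as in the proof of Proposition \ref{mode1kernel}, one can identify four linearly independent kernel solutions $z_{1,k},\ldots,z_{4,k}$ of (\ref{fmodesystem}) whose behaviors at $0$ and $\infty$ can be read off from the indicial exponents and the Bessel/power asymptotics. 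In particular, the exponentially decaying solution at infinity has both components of the same sign by the subordinate-component matching; tracking this sign back to $0$ via the maximum principle pins down its origin behavior as $O(r^{-k-2})$, while the $O(r^{k})$ solution at the origin is forced by nondegeneracy to grow at infinity.

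With the basis in hand, I derive the representation formula via variation of parameters. Liouville's formula gives $\det A=\kappa/(w(r)^{4}r^{2})$ for some nonzero $\kappa$, and the cofactor computation reduces each $c_{j}'(r)$ to a multiple of $w(r)^{2}r\,\widetilde{h}(r)\cdot a(r)$, where $a=(a_{1},a_{2})$ is itself a solution of the homogeneous system; its asymptotics at $0$ and $\infty$, read off from the basis, pin it down as a scalar multiple of one of the $z_{i,k}$. A suitable renormalization of the $z_{j,k}$ then yields a formula of the same shape as Proposition \ref{mode1repformula}. Choosing the antiderivative limits ($0$ to $r$, or $r$ to $\infty$) to match the growth of each $z_{j,k}$ ensures convergence under $\norm{h}_{**}<\infty$, and direct estimation using the power/exponential asymptotics of the basis gives $\norm{\phi}_{*}\leq C_{k}\norm{h}_{**}$. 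No orthogonality condition is required here: since $iW$ and $\partial W/\partial x_{j}$ live in $\psi$-Fourier modes $0$ or $1$, the corresponding pairings against data purely in mode $k\geq 2$ vanish automatically.

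The main obstacle is the matching step. In the $k=1$ case the explicit kernel element $z_{1,1}=(1/r,-w'/w)$ anchored the pairing of behaviors at $0$ with those at $\infty$; for $k\geq 2$ no such explicit solution is available, and the matching has to be extracted purely from sign preservation combined with nondegeneracy. The case $k=2$ is particularly delicate because the exponent $k-2=0$ produces a bounded-but-nonzero power at the origin, so the version of the max-principle argument that requires decay to $0$ at one end must be replaced by a finer analysis at that boundary. Verifying, in the representation step, that the cofactor vector $a$ really collapses onto a single $z_{i,k}$ rather than a generic linear combination is the technical heart of the argument and is what makes the resulting formula amenable to the sharp estimate.
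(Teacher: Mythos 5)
Your proposal follows essentially the same route as the paper: classification of the homogeneous kernel at $0$ (indicial exponents $k$, $k-2$, $-k$, $-k-2$) and at $\infty$ (power-law and Bessel-type behaviours), matching via the sign-preservation maximum principle combined with the Pacard--Rivi\`ere nondegeneracy, a variation-of-parameters representation formula obtained by identifying the cofactor vector with one of the $z_{j,k}$, and finally the estimate by choosing the antiderivative limits according to the growth of each kernel element. The points you flag as delicate (the matching for $k=2$ where $k-2=0$, and the collapse of the cofactor vector onto a single basis element) are exactly the steps the paper handles in Propositions \ref{modegeq2kernel} and \ref{modegeq2repformula}, so the plan is sound and consistent with the paper's argument.
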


\begin{corollary}
\label{fouriermodegeq2cor}
Let \(k\geq2\). There exists a constant \(C_{k}>0\) such that for any
    \begin{equation}
\label{hmodegeq2v2}
    h=iWh_{k}^{2},\quad\quad h_{k}^{2}=h_{k1}^{2}(r)\sin k\theta+ih_{k2}^{2}(r)\cos k\theta,
\end{equation}
satisfying \(\norm{h}_{**}<\infty\), equation (\ref{linearizedeq}) has a solution 
\begin{equation}
\label{phimodegeq2v2}
    \phi=iW\psi_{k}^{2},\quad\quad \psi_{k}^{2}=\psi_{k1}^{2}(r)\sin k\theta+i\psi_{k2}^{2}(r)\cos k\theta,
\end{equation}such that 
\begin{equation}
\label{fouriermodegeq2estv2}
    \norm{\phi}_{*}\leq C_{k}\norm{h}_{**}.
\end{equation}
\end{corollary}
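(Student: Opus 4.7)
The plan is to derive Corollary \ref{fouriermodegeq2cor} from Proposition \ref{fouriermodegeq2prop} by exploiting a simple sign-reversal symmetry of the reduced ODE system, in complete analogy with the deduction of Corollary \ref{fouriermode1cor} from Proposition \ref{fouriermode1prop}. The advantage of the \(k\geq2\) setting is that no orthogonality condition appears in Proposition \ref{fouriermodegeq2prop}, so the argument will be even more direct than in the \(k=1\) case.

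First I would use the decomposition (\ref{fmodesystem}) to observe that, for \(h\) of the form (\ref{hmodegeq2v2}) and \(\phi\) of the form (\ref{phimodegeq2v2}), equation (\ref{linearizedeq}) is equivalent to the \(l=2\) system
\begin{equation*}
\varphi'' + \left(2\frac{w'}{w} + \frac{1}{r}\right)\varphi' - \frac{1}{r^{2}}\begin{pmatrix} k^{2} & -2k \\ -2k & k^{2}+2w^{2}r^{2} \end{pmatrix}\varphi = \widetilde{h},
\end{equation*}
with \(\varphi=(\psi_{k1}^{2},\psi_{k2}^{2})\) and \(\widetilde{h}=(h_{k1}^{2},h_{k2}^{2})\). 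The only difference between this and the \(l=1\) system treated in Proposition \ref{fouriermodegeq2prop} is the sign of the off-diagonal entries of the coefficient matrix.

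Next I would introduce \(\tilde{\varphi}:=(\psi_{k1}^{2},-\psi_{k2}^{2})\) and \(\tilde{\widetilde{h}}:=(h_{k1}^{2},-h_{k2}^{2})\), and verify by a short direct calculation (multiplying the second row of the matrix equation by \(-1\)) that the \(l=2\) system above for \(\varphi\) with right-hand side \(\widetilde{h}\) is equivalent to the \(l=1\) system for \(\tilde{\varphi}\) with right-hand side \(\tilde{\widetilde{h}}\). Applying Proposition \ref{fouriermodegeq2prop} to the right-hand side corresponding to \(\tilde{\widetilde{h}}\) then produces a solution \(\tilde{\varphi}\) of the \(l=1\) system satisfying the bound (\ref{fouriermodegeq2estv1}), and reversing the sign change yields the desired \(\varphi\). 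Since the norms \(\norm{\cdot}_{*}\) and \(\norm{\cdot}_{**}\) defined in (\ref{normphi})--(\ref{normh}) are invariant under componentwise sign flips, estimate (\ref{fouriermodegeq2estv2}) follows immediately with the same constant \(C_{k}\).

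I do not expect a genuine obstacle in this step; the only thing to double-check is that the map \((\psi_{k1}^{2},\psi_{k2}^{2})\mapsto(\psi_{k1}^{2},-\psi_{k2}^{2})\) is consistent on both sides of the Fourier reconstructions (\ref{hmodegeq2v2}) and (\ref{phimodegeq2v2}), which is immediate from the definitions. The substantive analytical content, namely the representation formula for the \(k\geq2\) kernel and the estimates derived from it, is entirely packaged into Proposition \ref{fouriermodegeq2prop}, so the corollary reduces to essentially one line once the symmetry is identified.
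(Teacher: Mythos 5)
Your proposal is correct and matches the paper's own argument: the paper likewise observes that for \(h\) of the form (\ref{hmodegeq2v2}) and \(\phi\) of the form (\ref{phimodegeq2v2}), equation (\ref{linearizedeq}) reduces to the system (\ref{fmodegeq2system}) with \(\varphi=(\psi_{k1}^{2},-\psi_{k2}^{2})\) and \(\widetilde{h}=(h_{k1}^{2},-h_{k2}^{2})\), and then invokes the representation formula (\ref{modegeq2partsol}). Your observation that the norms are invariant under the componentwise sign flip, so the constant \(C_{k}\) is unchanged, is exactly the point that makes the reduction work.
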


As in the Fourier mode \(k=1\) case, it suffices to consider the setting of Proposition \ref{fouriermodegeq2prop}. The proof of Corollary \ref{fouriermodegeq2cor} then follows using a symmetry argument. Recall that, for \(h\) of the form (\ref{hmodegeq2v1}) and \(\phi\) of the form (\ref{phimodegeq2v1}), problem (\ref{linearizedeq}) reduces to the coupled ODE system
\begin{equation}
\label{fmodegeq2system}
\varphi''+\left(2\frac{w'}{w}+\frac{1}{r}\right)\varphi'-\frac{1}{r^{2}}\begin{pmatrix}
        k^{2} & 2k \\
        2k & k^{2}+2w^{2}r^{2} 
    \end{pmatrix}\varphi=\widetilde{h},
\end{equation}where 
\begin{equation}
    \varphi=\begin{pmatrix}[1.3]
        \psi_{k1}^{1}\\
        \psi_{k2}^{1}
    \end{pmatrix},\quad\quad\widetilde{h}= 
    \begin{pmatrix}[1.3]
        h_{k1}^{1}\\
        h_{k2}^{1}
    \end{pmatrix}.
\end{equation}The analysis of the above problem proceeds by generalizing the results of the previous section. In particular, we first classify the solutions to the homogeneous version of (\ref{fmodegeq2system}) in Subsection \ref{solhomproblemfmodegeq2}. The general solution to the inhomogeneous problem is then presented in Subsection \ref{solinhomproblemfmodegeq2}.

\subsection{Solutions of the homogeneous problem: Fourier modes \texorpdfstring{\(k\geq2\)}{k>=2}}
\label{solhomproblemfmodegeq2}

The homogeneous problem associated to (\ref{fmodegeq2system}) reads 
\begin{equation}
\label{fmodegeq2systemhom}
\varphi''+\left(2\frac{w'}{w}+\frac{1}{r}\right)\varphi'-\frac{1}{r^{2}}\begin{pmatrix}
        k^{2} & 2k \\
        2k & k^{2}+2w^{2}r^{2} 
    \end{pmatrix}\varphi=0,
\end{equation}where \(\varphi=(\varphi_{1},\varphi_{2})\). We have the following nondegeneracy result in this setting. 

\begin{lemma}
\label{fmodegeq2nondegeneracy}
    Let \(k\geq2\). The only solution of equation (\ref{fmodegeq2systemhom}) which blows up at most like \(O(r^{-1})\) as \(r\to0^{+}\) and remains bounded as \(r\to\infty\) is the trivial solution \(\varphi\equiv0\).
\end{lemma}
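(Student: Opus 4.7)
My plan is to reduce the ODE statement back to the bounded-kernel statement for the full PDE operator \(L\) and then apply the Pacard-Rivi\`{e}re classification \cite{pacardriviere2000}*{Theorem 3.2}, exactly as in the proof of Lemma \ref{fmode1nondegeneracy} for \(k=1\), but exploiting the higher Fourier mode to eliminate all three kernel generators simultaneously.

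Concretely, suppose \(\varphi=(\varphi_{1},\varphi_{2})\) solves (\ref{fmodegeq2systemhom}) with \(\varphi_{i}=O(r^{-1})\) as \(r\to 0^{+}\) and \(\varphi_{i}\) bounded as \(r\to\infty\). I would reconstruct the corresponding PDE solution
\[
\phi:=iW\bigl(\varphi_{1}(r)\cos k\theta+i\varphi_{2}(r)\sin k\theta\bigr),
\]
which, by the derivation of (\ref{fmodesystem}) from (\ref{linearizedeqpsi}), satisfies \(L[\phi]=0\) on \(\mathbb{R}^{2}\setminus\{0\}\). Since \(|W|=w\sim\alpha r\) as \(r\to 0^{+}\) by (\ref{wproperties}), the factor of \(w\) exactly cancels the assumed \(O(r^{-1})\) blow-up of the \(\varphi_{i}\), so \(\phi\) is uniformly bounded near the origin; at infinity \(w\leq 1\) and \(\varphi_{i}\) is bounded by hypothesis, so \(\phi\) is bounded on \(\mathbb{R}^{2}\). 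Standard elliptic regularity then upgrades \(\phi\) to a bounded classical solution of \(L[\phi]=0\) on all of \(\mathbb{R}^{2}\).

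At this point the Pacard-Rivi\`{e}re theorem forces \(\phi\) to be a linear combination of the three generators \(iW\), \(\partial W/\partial x_{1}\), \(\partial W/\partial x_{2}\). Writing each generator in the form \(iW\cdot\psi\), one has \(\psi=1\) for \(iW\), while the two translation generators are of the form recorded in Lemma \ref{fmode1nondegeneracy}. Thus all three generators live entirely in Fourier modes \(k=0\) and \(k=1\) of the decomposition (\ref{psifourier}). Since \(\phi\) is by construction a pure Fourier mode \(k\geq 2\) element, projecting onto the mode-\(k\) subspace in \(\theta\) annihilates each generator and forces \(\phi\equiv 0\), whence \(\varphi\equiv 0\).

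I do not anticipate a real obstacle here: once the bounded-kernel classification is available, the Fourier-mode incompatibility does all the work, and the only bookkeeping is the mode structure of the three generators, which is immediate. An alternative self-contained route would be to classify the four asymptotic behaviours of (\ref{fmodegeq2systemhom}) at \(0\) and at \(\infty\) (generalising Lemmas \ref{mode1kernelat0} and \ref{mode1kernelatinf}) and then run the maximum-principle argument of Lemma \ref{nosignchangemode1} to rule out a matching bounded solution when \(k\geq 2\); this would be substantially longer and seems unnecessary given the available reference.
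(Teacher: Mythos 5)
Your argument is correct and is essentially the paper's own proof: the paper likewise reconstructs the PDE solution \(\phi=iW\psi_{k}^{1}\), invokes the Pacard-Rivi\`{e}re classification \cite{pacardriviere2000}*{Theorem 3.2} to rule out bounded nontrivial mode-\(k\) kernel elements for \(k\geq2\), and translates back using \(w\sim r\) at the origin and \(w\sim1\) at infinity. You merely spell out the intermediate steps (boundedness of \(\phi\), removability of the singularity, and the mode structure of the three generators) in more detail than the paper does.
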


\begin{proof}
By the nondegeneracy result of Pacard and Rivi\`{e}re \cite{pacardriviere2000}*{Theorem 3.2}, there are no bounded nontrivial solutions of \(L[\phi]=0\) of the form (\ref{phimodegeq2v1}) for \(k\geq2\). Translating this fact to (\ref{fmodegeq2systemhom}), and using the properties \(\abs{W}=w\sim r\) as \(r\to0^{+}\) and \(\abs{W}=w\sim1\) as \(r\to\infty\), we see that the statement of the lemma holds.
\end{proof}

The aim of this subsection is to prove the following result.

\begin{proposition}
\label{modegeq2kernel}
Let \(k\geq2\). The homogeneous problem (\ref{fmodegeq2systemhom}) 
admits four linearly independent solutions \(z_{1,k}\), \(z_{2,k}\), \(z_{3,k}\), \(z_{4,k}\) with the asymptotic behaviour
\begin{equation}
\label{modegeq2kernelasymprop}
\begin{aligned}[c]
z_{1,k}(r)&=\begin{pmatrix}[1.3]
    O(r^{-k})\\
    O(r^{-k})
    \end{pmatrix},\text{ as }r\to0^{+},\\
z_{2,k}(r)&=\begin{pmatrix}[1.3]
    O(r^{k-2})\\
    O(r^{k-2})
    \end{pmatrix},\text{ as }r\to0^{+},\\
z_{3,k}(r)&=\begin{pmatrix}[1.3]
    O(r^{k})\\
    O(r^{k})
    \end{pmatrix},\text{ as }r\to0^{+},\\
z_{4,k}(r)&=\begin{pmatrix}[1.3]
    O(r^{-2-k})\\
    O(r^{-2-k})
    \end{pmatrix},\text{ as }r\to0^{+},    
\end{aligned}
\quad\quad
\begin{aligned}[c]
z_{1,k}(r)&=\begin{pmatrix}[1.3]
    O(r^{-k})\\
    O(r^{-k-2})
    \end{pmatrix},\text{ as }r\to\infty,\\
z_{2,k}(r)&=\begin{pmatrix}[1.3]
    O(r^{k})\\
    O(r^{k-2})
    \end{pmatrix},\text{ as }r\to\infty,\\
z_{3,k}(r)&=\begin{pmatrix}[1.3]
    O\big(r^{-5/2}e^{\sqrt{2}r}\big)\\
    O\big(r^{-1/2}e^{\sqrt{2}r}\big)
    \end{pmatrix},\text{ as }r\to\infty,\\
z_{4,k}(r)&=\begin{pmatrix}[1.3]
    O\big(r^{-5/2}e^{-\sqrt{2}r}\big)\\
    O\big(r^{-1/2}e^{-\sqrt{2}r}\big)
    \end{pmatrix},\text{ as }r\to\infty.    
\end{aligned}
\end{equation}
\end{proposition}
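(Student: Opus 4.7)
The plan is to follow the three-step strategy of the proof of Proposition \ref{mode1kernel} — asymptotic classification at $0$ and $\infty$, a componentwise maximum principle, and a matching argument — adapted to the fact that for $k\ge 2$ no explicit solution of (\ref{fmodegeq2systemhom}) is available, while the nondegeneracy statement of Lemma \ref{fmodegeq2nondegeneracy} is in force: no nonzero solution can be $O(r^{-1})$ at $0$ and bounded at $\infty$.

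First I would classify the leading behaviours at each end. As $r\to 0^+$, equation (\ref{fmodegeq2systemhom}) is asymptotic to the Euler-type system
\[
\varphi''+\frac{3}{r}\varphi'-\frac{1}{r^{2}}\begin{pmatrix}k^{2}&2k\\2k&k^{2}\end{pmatrix}\varphi=0,
\]
whose coefficient matrix has eigenvalues $k(k\pm 2)$ with eigenvectors $(1,\pm 1)$. The indicial equations in each eigendirection give the four independent leading behaviours $r^{-k}(1,-1)$, $r^{k-2}(1,-1)$, $r^{k}(1,1)$, $r^{-k-2}(1,1)$, and a contraction-mapping argument as in \cite{pacardriviere2000}*{Section 3.3} lifts each to a genuine solution of (\ref{fmodegeq2systemhom}) near $r=0$. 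As $r\to\infty$ the limiting system decouples into a Cauchy-Euler equation for $\varphi_1$ (with solutions $r^{\pm k}$) and a modified Bessel-type equation for $\varphi_2$ (with solutions $O(r^{-1/2}e^{\pm\sqrt{2}r})$); substituting each ansatz and matching the coupling as in Lemma \ref{mode1kernelatinf} produces the four formal leading behaviours stated in the proposition, again lifted by contraction mapping.

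Next, I would extend Lemma \ref{nosignchangemode1} to $k\ge 2$ — the proof is identical, as the off-diagonal coefficient $2k/r^{2}$ remains strictly positive and drives the contradiction at an interior positive maximum — and use it to identify the two ``symmetric'' solutions. For $z_{3,k}$, take the lift with leading $r^k(1,1)$ at $0$: sign preservation keeps both components positive throughout, Lemma \ref{fmodegeq2nondegeneracy} forbids boundedness at infinity, and among the four infinity behaviours only the exponentially growing one has both components of the same sign, which forces the stated asymptotics. For $z_{4,k}$, take the lift with exponentially decaying leading behaviour at infinity (both components positive): sign preservation and Lemma \ref{fmodegeq2nondegeneracy} (the solution is bounded at infinity, so it must blow up faster than $O(r^{-1})$ at $0$) together select the $r^{-k-2}(1,1)$ behaviour at $0$.

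Finally I would treat $z_{1,k}$ and $z_{2,k}$, whose $(1,-1)$ leading eigendirection rules out direct sign preservation, by a renormalization inside the 4-dimensional solution space. For $z_{1,k}$, take the lift with polynomial-decay leading $(r^{-k},-kr^{-k-2})$ at infinity; it is bounded at infinity, so nondegeneracy forces blowup faster than $O(r^{-1})$ at $0$, which leaves only $r^{-k}$ or $r^{-k-2}$ as admissible leading behaviours. Subtracting a suitable multiple of $z_{4,k}$ (whose exponential decay at infinity preserves the polynomial-decay leading) eliminates any $r^{-k-2}$ contribution, and nondegeneracy rules out a less-singular leading at $0$, yielding the required $r^{-k}(1,-1)$ behaviour. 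For $z_{2,k}$, take the lift with $r^{k-2}(1,-1)$ leading at $0$: subtracting a multiple of $z_{3,k}$ (whose $r^k$ leading at $0$ is less singular and so preserves the $r^{k-2}$ leading) removes any exponentially growing contribution at infinity, and the resulting solution, being non-trivial, cannot be bounded at infinity without violating Lemma \ref{fmodegeq2nondegeneracy}; hence it must contain the polynomial-growth mode, giving the claimed $(O(r^k),O(r^{k-2}))$ bounds at infinity. The principal obstacle is precisely this last step: with no explicit solution to anchor the analysis and the maximum principle unavailable on the $(1,-1)$ eigendirection, the classification of $z_{1,k}$ and $z_{2,k}$ rests on a delicate interplay between nondegeneracy and linear-independence bookkeeping, requiring one to verify at each renormalization that adjusting the leading behaviour at one endpoint does not spoil the required bounds at the other.
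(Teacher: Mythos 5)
Your proposal is correct and follows essentially the same route as the paper: classification of the indicial behaviours at $0$ and $\infty$ via contraction mapping (Lemmas \ref{modegeq2kernelat0} and \ref{modegeq2kernelatinf}), the componentwise sign-preservation lemma (Lemma \ref{nosignchangemodegeq2}), and matching through Lemma \ref{fmodegeq2nondegeneracy}, including the subtraction of a multiple of $z_{4,k}$ to normalize $z_{1,k}$ at the origin. Your treatment of $z_{2,k}$ (explicitly subtracting a multiple of $z_{3,k}$ to kill the exponentially growing component, after checking this does not disturb the $r^{k-2}$ leading behaviour at $0$) is in fact slightly more explicit than the paper's appeal to "the remaining behaviour at infinity," but it is the same argument in substance.
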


As in the Fourier mode \(k=1\) case, we first need to classify the possible behaviour of solutions as \(r\to0^{+}\) and as \(r\to\infty\).

\begin{lemma}
\label{modegeq2kernelat0}
   For \(k\geq2\), the solution space of equation (\ref{fmodegeq2systemhom}) is spanned, for small \(r>0\), by four functions with the asymptotic behaviour
    \begin{equation}
    \label{modegeq2kernelat0eq}
    \begin{aligned}[c]
    \begin{pmatrix}[1.3]
        r^{-k}+O(r^{-k+2})\\
        -r^{-k}+O(r^{-k+2})
    \end{pmatrix},\text{ as }r\to0^{+},\\
    \begin{pmatrix}[1.3]
        r^{k}+O(r^{k+2})\\
        r^{k}+O(r^{k+2})
    \end{pmatrix},\text{ as }r\to0^{+},
    \end{aligned}\quad\quad
    \begin{aligned}[c]
    \begin{pmatrix}[1.3]
        r^{k-2}+O(r^{k})\\
        -r^{k-2}+O(r^{k})
    \end{pmatrix},\text{ as }r\to0^{+},\\
    \begin{pmatrix}[1.3]
        r^{-2-k}+O(r^{-k})\\
        r^{-2-k}+O(r^{-k})
    \end{pmatrix},\text{ as }r\to0^{+}. 
    \end{aligned}
\end{equation}
\end{lemma}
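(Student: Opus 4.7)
The plan is to reduce (\ref{fmodegeq2systemhom}) to an Euler-type system by examining the leading behaviour of the coefficients as $r\to 0^{+}$, exhibit four explicit power solutions of the reduced system, and then perturb each of them into a solution of the full system via a contraction-mapping argument analogous to the one used in Lemma \ref{mode1kernelat0} (following \cite{pacardriviere2000}*{Section 3.3}).

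From (\ref{wproperties}) we have $2w'/w+1/r = 3/r+O(r)$ and $2w^{2}r^{2}=O(r^{4})$ as $r\to 0^{+}$, so (\ref{fmodegeq2systemhom}) is asymptotic to the Euler system
\begin{equation*}
\varphi''+\frac{3}{r}\varphi'-\frac{1}{r^{2}}\begin{pmatrix}k^{2}&2k\\ 2k&k^{2}\end{pmatrix}\varphi=0.
\end{equation*}
The constant matrix is symmetric with eigenvalues $k(k+2)$ and $k(k-2)$, corresponding to the eigenvectors $(1,1)$ and $(1,-1)$. In these eigendirections the system decouples into scalar Euler equations of the form $\varphi''+3\varphi'/r-\mu\varphi/r^{2}=0$, whose indicial roots are $-1\pm\sqrt{1+\mu}$. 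Plugging in $\mu=k(k+2)$ yields exponents $k$ and $-k-2$ along $(1,1)$; plugging in $\mu=k(k-2)$ yields exponents $k-2$ and $-k$ along $(1,-1)$. For $k\geq 2$ the four exponents are pairwise distinct, and the resulting power-vector pairs match exactly the leading terms displayed in (\ref{modegeq2kernelat0eq}).

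For each such leading profile $r^{\alpha}v_{0}$, I would write $\varphi=r^{\alpha}v_{0}+\chi$ and substitute into (\ref{fmodegeq2systemhom}) to obtain a fixed-point equation for $\chi$ on the weighted Banach space of functions with $\sup_{0<r\leq r_{0}}r^{-\alpha-2}\abs{\chi(r)}<\infty$ for $r_{0}>0$ small. The forcing in this equation is automatically $O(r^{\alpha+2})$, since the perturbation of the coefficient $2w'/w+1/r$ past $3/r$ is $O(r)$ and $2w^{2}r^{2}$ is $O(r^{4})$, both of which hit $v_{0}r^{\alpha}$ to give contributions of order at most $r^{\alpha+2}$. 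Inverting the reduced Euler operator (which acts explicitly and boundedly on such weighted spaces) and closing the contraction as in \cite{pacardriviere2000}*{Section 3.3} produces a solution with $\chi=O(r^{\alpha+2})$, giving the desired asymptotics in (\ref{modegeq2kernelat0eq}).

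The delicate point — and what I expect to be the main obstacle — is a potential resonance. The four exponents $k$, $k-2$, $-k$, $-k-2$ are spaced by even integers, and two consecutive pairs differ by exactly $2$. When one inverts the Euler operator at order $r^{\alpha+2}$, its kernel along one eigenvector may coincide with the direction of the forcing, which would force a logarithmic correction $r^{\alpha+2}\log r$. A direct computation shows that the $r^{\alpha+2}$-order forcing lies in the same eigendirection as $v_{0}$, so the resonance is triggered precisely when $\alpha+2$ equals the other exponent attached to that eigenvector. For $k\geq 3$ no such coincidence arises and all four constructions close cleanly; the only borderline case is $k=2$ with $\alpha=-k=-2$, where a $\log r$ refinement may appear and should be understood as absorbed into the stated $O(r^{-k+2})$ remainder. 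Linear independence of the four constructed solutions then follows at once from their distinct leading orders.
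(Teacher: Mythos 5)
Your approach is the same as the paper's: pass to the Euler system at \(r=0\), diagonalize the constant matrix along \((1,1)\) and \((1,-1)\) to read off the four exponents \(k\), \(-k-2\), \(k-2\), \(-k\), and perturb each power solution into a solution of the full system by a contraction mapping in a weighted space. The paper's proof is exactly this (stated even more tersely), and your eigenvalue and indicial-root computations are correct.

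Two points need correction. First, the claim that the forcing for \(\chi\) is ``automatically \(O(r^{\alpha+2})\)'' is off by two orders: writing \(w=ar+br^{3}+\cdots\) with \(b=-a/8\), the coefficient perturbation is \(2w'/w-2/r=4(b/a)r+O(r^{3})\), and it multiplies \(\varphi'\sim\alpha r^{\alpha-1}v_{0}\), so the leading forcing is \(4\alpha(b/a)\,r^{\alpha}v_{0}\), of order \(r^{\alpha}\) and parallel to \(v_{0}\) (the \(2w^{2}\) term only enters at order \(r^{\alpha+2}\)). This is still consistent with \(\chi=O(r^{\alpha+2})\), since inverting the Euler operator gains two powers, and it is precisely the premise on which your own resonance criterion ``\(\alpha+2\) equals the other exponent attached to the same eigenvector'' rests; so the argument survives once that sentence is fixed. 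Second, the borderline case you flag, \(k=2\) with \(\alpha=-2\) in the \((1,-1)\) direction, is not merely potential: the forcing there is \(-4\alpha(b/a)\,r^{-2}v_{0}=-r^{-2}v_{0}\neq0\), and since the Euler eigenvalue in that direction is \(k(k-2)=0\) the correction solves \(\chi''+\tfrac{3}{r}\chi'=-r^{-2}\), i.e.\ \(\chi=-\tfrac12\log r\). A \(\log r\) term genuinely appears at order \(r^{0}\), and since \(\log r\) is unbounded as \(r\to0^{+}\) it is \emph{not} absorbed into an \(O(r^{-k+2})=O(1)\) remainder; the correct statement for that solution when \(k=2\) is \(r^{-2}+O(\log r)\). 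This is an imprecision already present in the lemma as stated, and it is harmless downstream (only the bound \(z_{1,2}=O(r^{-2})\) as \(r\to0^{+}\) is used in Proposition \ref{modegeq2kernel} and beyond), but your write-up should record the remainder as \(O(\log r)\) rather than asserting the log is absorbed.
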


\begin{proof}
As \(r\to0^{+}\), problem (\ref{fmodegeq2systemhom}) becomes asymptotic to the equation
\begin{equation}
\label{fmodegeq2systemhomrto0}
\varphi''+\frac{3}{r}\varphi'-\frac{1}{r^{2}}\begin{pmatrix}
        k^{2} & 2k \\
        2k & k^{2} 
    \end{pmatrix}\varphi=0.
\end{equation}The four linearly independent solutions of (\ref{fmodegeq2systemhomrto0}) are given by
\begin{equation*}
\begin{pmatrix}[1.3]
        r^{-k}\\
        -r^{-k}
    \end{pmatrix},\quad 
\begin{pmatrix}[1.3]
        r^{k-2}\\
        -r^{k-2}
    \end{pmatrix},\quad
\begin{pmatrix}[1.3]
        r^{k}\\
        r^{k}
    \end{pmatrix},\quad 
\begin{pmatrix}[1.3]
        r^{-2-k}\\
        r^{-2-k}
    \end{pmatrix},     
\end{equation*}as follows from direct calculation. A contraction mapping argument then gives four linearly independent solutions of (\ref{fmodegeq2systemhom}) for small \(r>0\) which have the properties (\ref{modegeq2kernelat0eq}).
\end{proof}

\begin{lemma}
\label{modegeq2kernelatinf}
    For \(k\geq2\), the solution space of equation (\ref{fmodegeq2systemhom}) is spanned, for large \(r>0\), by four functions with the asymptotic behaviour
    \begin{equation}
    \begin{aligned}[c]
    \begin{pmatrix}[1.3]
        r^{-k}+O(r^{-k-2})\\
        -kr^{-k-2}+O(r^{-k-4})
    \end{pmatrix},\text{ as }r\to\infty,\\
    \begin{pmatrix}[1.3]
        r^{k}+O(r^{k-2})\\
        -kr^{k-2}+O(r^{k-4})
    \end{pmatrix},\text{ as }r\to\infty,\\
    \begin{pmatrix}[1.3]
        kr^{-5/2}e^{\sqrt{2}r}+O\big(r^{-7/2}e^{\sqrt{2}r}\big)\\
        r^{-1/2}e^{\sqrt{2}r}+O\big(r^{-3/2}e^{\sqrt{2}r}\big)
    \end{pmatrix},\text{ as }r\to\infty,\\
    \begin{pmatrix}[1.3]
        kr^{-5/2}e^{-\sqrt{2}r}+O\big(r^{-7/2}e^{-\sqrt{2}r}\big)\\
        r^{-1/2}e^{-\sqrt{2}r}+O\big(r^{-3/2}e^{-\sqrt{2}r}\big)
    \end{pmatrix},\text{ as }r\to\infty.
    \end{aligned}
    \end{equation}

\begin{proof}
    As \(r\to\infty\), problem (\ref{fmodegeq2systemhom}) becomes asymptotic to the coupled system of equations
    \begin{align}
        &\varphi_{1}''+\frac{1}{r}\varphi_{1}'-\frac{k^{2}}{r^{2}}\varphi_{1}-\frac{2k}{r^{2}}\varphi_{2}=0,\label{proofrtoinfmodegeq2hom1}\\
        &\varphi_{2}''+\frac{1}{r}\varphi_{2}'-\frac{2k}{r^{2}}\varphi_{1}-2\varphi_{2}=0.\label{proofrtoinfmodegeq2hom2}
    \end{align}Neglecting the \(\varphi_{2}\) term in (\ref{proofrtoinfmodegeq2hom1}), we arrive at the equation \(\varphi_{1}''+\tfrac{1}{r}\varphi_{1}'-\tfrac{k^{2}}{r^{2}}\varphi_{1}=0\), whose solutions are spanned by the functions \(\varphi_{1}=r^{-k}\) and \(\varphi_{1}=r^{k}\). Substituting this ansatz for \(\varphi_{1}\) into (\ref{proofrtoinfmodegeq2hom1})-(\ref{proofrtoinfmodegeq2hom2}) and matching powers of \(r\) for \(\varphi_{2}\), we can construct two formal power series solutions to this system with asymptotic behaviour
    \begin{equation*}
        \varphi= \begin{pmatrix}[1.3]
        r^{-k}+O(r^{-k-2})\\
        -kr^{-k-2}+O(r^{-k-4})
    \end{pmatrix},\quad\quad\varphi=\begin{pmatrix}[1.3]
        r^{k}+O(r^{k-2})\\
        -kr^{k-2}+O(r^{k-4})
    \end{pmatrix},
    \end{equation*}as \(r\to\infty\). A contraction mapping argument then gives two linearly independent solutions of the original equation (\ref{fmodegeq2systemhom}) for large \(r>0\) with these properties.

    Turning to the second equation (\ref{proofrtoinfmodegeq2hom2}) and neglecting the \(\varphi_{1}\) term, we arrive once more at the Bessel-type problem \(\varphi_{2}''+\tfrac{1}{r}\varphi_{2}'-2\varphi_{2}=0\). Substituting the ansatz \(\varphi_{2}\sim r^{-1/2}e^{\sqrt{2}r}\) and \(\varphi_{2}\sim r^{-1/2}e^{-\sqrt{2}r}\) into (\ref{proofrtoinfmodegeq2hom1})-(\ref{proofrtoinfmodegeq2hom2}) and matching powers of \(r\) for \(\varphi_{1}\), we find two generalized formal power series solutions to this system with asymptotic behaviour 
    \begin{equation*}
        \varphi=\begin{pmatrix}[1.3]
        kr^{-5/2}e^{\sqrt{2}r}+O\big(r^{-7/2}e^{\sqrt{2}r}\big)\\
        r^{-1/2}e^{\sqrt{2}r}+O\big(r^{-3/2}e^{\sqrt{2}r}\big)
    \end{pmatrix},\quad\quad\varphi=\begin{pmatrix}[1.3]
       k r^{-5/2}e^{-\sqrt{2}r}+O\big(r^{-7/2}e^{-\sqrt{2}r}\big)\\
        r^{-1/2}e^{-\sqrt{2}r}+O\big(r^{-3/2}e^{-\sqrt{2}r}\big)
    \end{pmatrix},
    \end{equation*}as \(r\to\infty\). Again a contraction mapping argument can be employed to obtain two linearly independent solutions of (\ref{fmodegeq2systemhom}) for large \(r>0\) with these properties.
\end{proof}
\end{lemma}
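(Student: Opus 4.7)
The plan is to construct four linearly independent solutions of (\ref{fmodegeq2systemhom}) defined on some interval $[R,\infty)$ with $R$ large, each matching one of the four stated asymptotic profiles. Since two of the profiles are polynomial ($r^{\pm k}$) and two are exponential ($r^{-1/2}e^{\pm\sqrt{2}r}$), linear independence is automatic once the four solutions exist. The strategy has two steps: first, find four formal asymptotic solutions of the leading-order system obtained from (\ref{fmodegeq2systemhom}) by replacing $w$ with $1$ and dropping the $w'/w$ term, i.e.\ the system (\ref{proofrtoinfmodegeq2hom1})--(\ref{proofrtoinfmodegeq2hom2}); then upgrade each formal solution to an exact solution of the full equation via a contraction mapping argument in a suitable weighted $L^\infty$ space on $[R,\infty)$.

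For the two polynomial solutions I would drop the $\varphi_2$ coupling from (\ref{proofrtoinfmodegeq2hom1}) to obtain a Cauchy--Euler equation with characteristic roots $\pm k$, giving $\varphi_1 \sim r^{\pm k}$ as the two natural leading-order profiles. Substituting each back into (\ref{proofrtoinfmodegeq2hom2}) and solving formally for $\varphi_2$ produces a leading particular term of order $r^{\pm k - 2}$; balancing the dominant $-2\varphi_2$ against $-2k r^{-2}\varphi_1$ pins down the coefficient $-k$ in front of the leading $r^{\pm k-2}$. Iterating and matching powers gives formal Laurent-type series truncated to any desired order. For the two exponential solutions I would instead drop the $\varphi_1$ coupling in (\ref{proofrtoinfmodegeq2hom2}) to reach the modified Bessel equation $\varphi_2'' + r^{-1}\varphi_2' - 2\varphi_2 = 0$, whose solutions behave like $r^{-1/2}e^{\pm\sqrt{2}r}$. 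Plugging this ansatz back into (\ref{proofrtoinfmodegeq2hom1}), the dominant contribution is $\varphi_1''$, which must balance $2k r^{-2}\varphi_2 \sim 2k r^{-5/2} e^{\pm \sqrt{2}r}$; the ansatz $\varphi_1 = C r^{-5/2}e^{\pm\sqrt{2}r}$ then gives $2C = 2k$, hence $C = k$, which matches the stated profile. Again one iterates to generate higher-order corrections.

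To promote each formal solution $\varphi_{\mathrm{form},j}$ to a genuine solution I would set $\varphi_j = \varphi_{\mathrm{form},j} + \eta_j$. The remainder $\eta_j$ satisfies an inhomogeneous system whose forcing is, by construction, smaller than the leading profile by an extra power of $r^{-1}$ (as high as desired by truncating the formal series far enough), and whose linear coefficients differ from those of (\ref{proofrtoinfmodegeq2hom1})--(\ref{proofrtoinfmodegeq2hom2}) only by $O(r^{-2})$ perturbations coming from $2w'/w$ and $1-w^2$. Variation of parameters against the fundamental set of the limit system converts this into an integral equation, and contraction in a weighted $L^\infty$ norm on $[R,\infty)$ (the weight being the inverse of the desired leading profile) follows for $R$ sufficiently large since each integration gains a smallness factor $R^{-1}$. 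The main delicate step, and the point where one has to be careful mode by mode, is the choice of limits of integration in the variation of parameters formula: for the two growing modes one integrates outward from $R$, whereas for the two decaying modes one must integrate inward from $\infty$, so that the correction $\eta_j$ inherits the correct decay and does not inadvertently reintroduce a component of the dominant growing modes that would destroy the prescribed asymptotic behaviour.
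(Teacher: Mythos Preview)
Your proposal is correct and follows essentially the same route as the paper: identify the limiting system (\ref{proofrtoinfmodegeq2hom1})--(\ref{proofrtoinfmodegeq2hom2}), build the four formal profiles by first decoupling (dropping $\varphi_2$ in the first equation for the polynomial pair, dropping $\varphi_1$ in the second for the exponential pair), fix the cross-coefficients by back-substitution, and then upgrade to true solutions of (\ref{fmodegeq2systemhom}) via a contraction mapping on $[R,\infty)$. Your additional remarks on the choice of integration limits and the weighted $L^\infty$ norm spell out details the paper leaves implicit, but the argument is the same.
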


We also need the following lemma.

\begin{lemma}
\label{nosignchangemodegeq2}
    Let \(\varphi=(\varphi_{1},\varphi_{2})\) be a solution of (\ref{fmodegeq2systemhom}).
    \begin{enumerate}[(i)]
        \item If \(\varphi_{1},\varphi_{2}\to0\) as \(r\to0^{+}\) and \(\varphi_{1}(r),\varphi_{2}(r)>0\) for small \(r>0\), then \(\varphi_{1}(r),\varphi_{2}(r)>0\) for all \(r\in(0,\infty)\).
        \item If \(\varphi_{1},\varphi_{2}\to0\) as \(r\to\infty\) and \(\varphi_{1}(r),\varphi_{2}(r)>0\) for large \(r>0\), then \(\varphi_{1}(r),\varphi_{2}(r)>0\) for all \(r\in(0,\infty)\).
    \end{enumerate}
\end{lemma}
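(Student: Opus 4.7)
My plan is to transfer the maximum principle argument from the proof of Lemma \ref{nosignchangemode1} essentially verbatim, exploiting the fact that the sign pattern of the matrix in (\ref{fmodegeq2systemhom}) agrees with that of (\ref{fmode1systemhom}) up to the replacements \(1\mapsto k^{2}\) and \(2\mapsto 2k\). In particular, the off-diagonal coupling \(+2k\varphi_{3-j}/r^{2}\) produced on the right-hand side of each row is strictly positive whenever both components are, while the diagonal coefficient \(-k^{2}/r^{2}\) and the extra term \(-2w^{2}\) appearing in the second row both have the correct sign to rule out an interior positive maximum.

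For part \((i)\), I would argue by contradiction. If either \(\varphi_{1}\) or \(\varphi_{2}\) vanished somewhere in \((0,\infty)\), then letting \(r_{1}\) be the infimum of the zero set of either component, both components would remain strictly positive on \((0,r_{1})\) while one of them vanishes at \(r_{1}\); that component then attains a strictly positive maximum at some \(r_{0}\in(0,r_{1})\), at which point the other component is also strictly positive. If \(\varphi_{1}\) achieves the maximum at \(r_{0}\), the first row of (\ref{fmodegeq2systemhom}) reads
\begin{equation*}
\varphi_{1}''+\left(2\frac{w'}{w}+\frac{1}{r}\right)\varphi_{1}'-\frac{k^{2}}{r^{2}}\varphi_{1}=\frac{2k}{r^{2}}\varphi_{2}>0
\end{equation*}
at \(r_{0}\), but its left-hand side is bounded above by \(-k^{2}\varphi_{1}(r_{0})/r_{0}^{2}<0\), a contradiction. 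The case where \(\varphi_{2}\) achieves the maximum is handled identically via the second row, where the extra term \(-2w^{2}\varphi_{2}\) only reinforces the negative sign of the left-hand side.

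Part \((ii)\) follows by an entirely symmetric argument, with the positive local maximum now located by approaching from \(r=\infty\) using the decay hypothesis: the infimum of the zero set of either component, viewed from infinity, furnishes an interior \(r_{0}\) at which the same contradiction occurs against the appropriate row of (\ref{fmodegeq2systemhom}).

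I do not foresee any real obstacle: the argument is a direct transcription of the proof of Lemma \ref{nosignchangemode1}, and the only fact to verify is that \(-k^{2}/r^{2}\) and \(+2k/r^{2}\) carry the correct signs for every \(k\geq 2\), which is immediate.
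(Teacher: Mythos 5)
Your proof is correct and is exactly the argument the paper intends: the paper's own proof of this lemma simply states that it is identical to the $k=1$ case (Lemma \ref{nosignchangemode1}), and your transcription — locating a strictly positive interior maximum of the vanishing component and contradicting the corresponding row of (\ref{fmodegeq2systemhom}) via the positive coupling term $2k\varphi_{3-j}/r^{2}$ — is precisely that argument with $1\mapsto k^{2}$, $2\mapsto 2k$.
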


\begin{proof}
    The proof is essentially identical to that of Lemma \ref{nosignchangemode1} in the mode \(k=1\) case. We omit the details. 
\end{proof}

Combining the above results, we can complete the classification of solutions for the homogeneous problem.

\begin{proof}[Proof of Proposition \ref{modegeq2kernel}]
    Let \(z_{3,k}(r)\) be a solution of the homogeneous problem from Lemma \ref{modegeq2kernelat0} with the behaviour
    \begin{equation*}
    z_{3,k}(r)=\begin{pmatrix}[1.3]
        r^{k}+O(r^{k+2})\\
        r^{k}+O(r^{k+2})
    \end{pmatrix}    
    \end{equation*}for small \(r>0\). By standard ODE theory \(z_{3,k}(r)\) extends to a solution of (\ref{fmodegeq2systemhom}) defined for all \(r\in(0,\infty)\), and Lemma \ref{fmodegeq2nondegeneracy} implies that \(z_{3,k}(r)\) must become unbounded as \(r\to\infty\). By Lemma \ref{nosignchangemodegeq2}, the function \(z_{3,k}(r)\) must have components of the same sign as \(r\to\infty\). It follows from Lemma \ref{modegeq2kernelatinf} that
    \begin{equation*}
        z_{3,k}(r)=c\begin{pmatrix}[1.3]
        kr^{-5/2}e^{\sqrt{2}r}+O\big(r^{-7/2}e^{\sqrt{2}r}\big)\\
        r^{-1/2}e^{\sqrt{2}r}+O\big(r^{-3/2}e^{\sqrt{2}r}\big)
    \end{pmatrix},\text{ as }r\to\infty,
    \end{equation*}for some constant \(c>0\).

    Now consider a solution \(z_{4,k}(r)\) of the homogeneous problem from Lemma \ref{modegeq2kernelatinf} with the behaviour
    \begin{equation*}
    z_{4,k}(r)=\begin{pmatrix}[1.3]
        kr^{-5/2}e^{-\sqrt{2}r}+O\big(r^{-7/2}e^{-\sqrt{2}r}\big)\\
        r^{-1/2}e^{-\sqrt{2}r}+O\big(r^{-3/2}e^{-\sqrt{2}r}\big)
    \end{pmatrix},\text{ as }r\to\infty.
    \end{equation*}By Lemma \ref{fmodegeq2nondegeneracy} and Lemma \ref{nosignchangemodegeq2}, the solution \(z_{4,k}(r)\) must blow up faster than \(O(r^{-1})\) and have components of the same sign as \(r\to0^{+}\). It follows from Lemma \ref{modegeq2kernelat0} that 
    \begin{equation*}
     z_{4,k}(r)=c\begin{pmatrix}[1.3]
        r^{-2-k}+O(r^{-k})\\
        r^{-2-k}+O(r^{-k})
    \end{pmatrix},\text{ as }r\to0^{+}.   
    \end{equation*}Next, let \(z_{1,k}(r)\) denote a solution of the homogeneous problem from Lemma \ref{modegeq2kernelatinf} with the property
    \begin{equation*}
        z_{1,k}(r)=\begin{pmatrix}[1.3]
        r^{-k}+O(r^{-k-2})\\
        -kr^{-k-2}+O(r^{-k-4})
    \end{pmatrix},\text{ as }r\to\infty.
    \end{equation*}By Lemma \ref{fmodegeq2nondegeneracy}, the function \(z_{1,k}(r)\) must blow up faster than \(O(r^{-1})\) as \(r\to0^{+}\). Subtracting a multiple of \(z_{4,k}(r)\), if necessary, we can arrange using Lemma \ref{modegeq2kernelat0} that the behaviour of \(z_{1,k}(r)\) as \(r\to0^{+}\) is given by
    \begin{equation*}
        z_{1,k}(r)=c\begin{pmatrix}[1.3]
        r^{-k}+O(r^{-k+2})\\
        -r^{-k}+O(r^{-k+2})
    \end{pmatrix}.
    \end{equation*}Finally, let \(z_{2,k}(r)\) be a solution of the homogeneous problem from Lemma \ref{modegeq2kernelat0} with the behaviour
    \begin{equation*}
    z_{2,k}(r)=\begin{pmatrix}[1.3]
        r^{k-2}+O(r^{k})\\
        -r^{k-2}+O(r^{k})
    \end{pmatrix},\text{ as }r\to0^{+}.
    \end{equation*}By Lemma \ref{fmodegeq2nondegeneracy}, the function \(z_{2,k}(r)\) must become unbounded as \(r\to\infty\). Moreover, the function \(z_{2,k}(r)\) must have the remaining behaviour at infinity described in Lemma \ref{modegeq2kernelatinf}. Thus 
    \begin{equation*}
        z_{2,k}(r)=c\begin{pmatrix}[1.3]
        r^{k}+O(r^{k-2})\\
        -kr^{k-2}+O(r^{k-4})
    \end{pmatrix},\text{ as }r\to\infty.
    \end{equation*}
\end{proof}

\subsection{Solutions of the inhomogeneous problem: Fourier modes \texorpdfstring{\(k\geq2\)}{k>=2}} 
\label{solinhomproblemfmodegeq2}

In this subsection we obtain a representation formula for solutions of the inhomogeneous problem (\ref{fmodegeq2system}), and complete the proof of Proposition \ref{fouriermodegeq2prop} and Corollary \ref{fouriermodegeq2cor}.

\begin{proposition}
 \label{modegeq2repformula}  
    Let \(\varphi=(\varphi_{1},\varphi_{2})\) and \(\widetilde{h}=(\widetilde{h}_{1},\widetilde{h}_{2})\). For \(k\geq2\), the general solution of the inhomogeneous problem (\ref{fmodegeq2system}) can be written in the form
    \begin{equation*}
\begin{split}
    \varphi(r)=&\quad\left(\int w(s)^{2}s\:\widetilde{h}(s)\cdot z_{2,k}(s)\:ds\right)z_{1,k}(r)\\
    &-\left(\int w(s)^{2}s\:\widetilde{h}(s)\cdot z_{1,k}(s)\:ds\right)z_{2,k}(r)\\
    &+\left(\int w(s)^{2}s\:\widetilde{h}(s)\cdot z_{4,k}(s)\:ds\right)z_{3,k}(r)\\
    &-\left(\int w(s)^{2}s\:\widetilde{h}(s)\cdot z_{3,k}(s)\:ds\right)z_{4,k}(r),
\end{split}
\end{equation*}where \(z_{1,k},\ldots, z_{4,k}\) denote the four linearly independent solutions of the homogeneous problem described in Proposition \ref{modegeq2kernel}, and the symbols \(\int\) denote arbitrary antiderivatives.
\end{proposition}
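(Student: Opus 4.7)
The plan is to mirror the argument used for Proposition \ref{mode1repformula} in the Fourier mode $k=1$ case, adapting the asymptotic matching to the behaviour described in Proposition \ref{modegeq2kernel}. First I would write the general solution via variation of parameters as $\varphi(r)=\sum_{j=1}^{4}c_{j}(r)z_{j,k}(r)$, imposing the standard constraints $\sum_{j=1}^{4}c_{j}'(r)z_{j,k}(r)=0$ and $\sum_{j=1}^{4}c_{j}'(r)z_{j,k}'(r)=\widetilde{h}(r)$. By Cramer's rule one then has $c_{j}'(r)=\det A_{j}/\det A$, where $A$ is the $4\times4$ fundamental matrix with columns $(z_{j1,k},z_{j2,k},z_{j1,k}',z_{j2,k}')$ and $A_{j}$ replaces the $j$th column by $(0,0,\widetilde{h}_{1},\widetilde{h}_{2})$.

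Next I would compute $\det A$ by rewriting (\ref{fmodegeq2systemhom}) as a first order $4\times 4$ system and applying Liouville's formula. The trace of the coefficient matrix is $-(2w'/w+1/r)$ in each of two identical blocks, so the Wronskian satisfies $(\det A)'=-2(2w'/w+1/r)\det A$, giving $\det A=\kappa/(w(r)^{4}r^{2})$ for some nonzero constant $\kappa$. For each numerator $\det A_{j}$, an expansion along the column $(0,0,\widetilde{h}_{1},\widetilde{h}_{2})$ produces an expression of the form $\det A_{j}=(a^{(j)}\cdot\widetilde{h})/(w(r)^{2}r)$, where $a^{(j)}=(a^{(j)}_{1},a^{(j)}_{2})$ has components equal to certain $3\times3$ minors of the remaining columns, multiplied by $w(r)^{2}r$ to absorb one factor of the Wronskian normalization.

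The crucial step is then to show that each vector $a^{(j)}$ is itself a solution of the homogeneous system (\ref{fmodegeq2systemhom}). This is the point which requires the ``lengthy but straightforward calculation'' mentioned in the $k=1$ case: differentiating the relevant $3\times3$ minor twice, using the fact that each $z_{i,k}$ solves (\ref{fmodegeq2systemhom}), and rearranging reduces the expression to the system (\ref{fmodegeq2systemhom}) applied to $a^{(j)}$. Once this is established, $a^{(j)}$ lies in the four-dimensional solution space spanned by $z_{1,k},\ldots,z_{4,k}$, and its identification is forced by its asymptotic behaviour: reading off the leading powers of $r$ from the minors via (\ref{modegeq2kernelasymprop}) and its analogue for the derivatives, one matches $a^{(1)}$ with $z_{2,k}$, $a^{(2)}$ with $z_{1,k}$, $a^{(3)}$ with $z_{4,k}$, and $a^{(4)}$ with $z_{3,k}$, up to multiplicative constants $\kappa_{1},\kappa_{2}\neq 0$. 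A suitable renormalization of $z_{2,k}$ and $z_{4,k}$ then absorbs these constants, yielding $c_{j}'(r)$ in the form $\pm w(r)^{2}r\,\widetilde{h}(r)\cdot z_{i,k}(r)$ that integrates to the stated representation formula.

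The main obstacle is the asymptotic identification in the last step: since for $k\geq 2$ the four homogeneous solutions are distinguished only by polynomial growth rates at both ends (rather than by exponentials alone), care is needed to verify that each minor $a^{(j)}$ matches precisely one of the $z_{i,k}$. This is handled by simultaneously tracking the behaviour of $a^{(j)}$ as $r\to 0^{+}$ and as $r\to\infty$, and using the linear independence encoded in Proposition \ref{modegeq2kernel} to exclude all but the claimed choice. Once the leading orders are pinned down on both ends, the identification is forced, and the remainder of the proof is formally identical to Proposition \ref{mode1repformula}.
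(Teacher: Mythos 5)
Your proposal is correct and follows essentially the same route as the paper: the paper's own proof of Proposition \ref{modegeq2repformula} simply states that it proceeds exactly as in the $k=1$ case (Proposition \ref{mode1repformula}), and your argument is a faithful adaptation of that proof, with the correct Wronskian $\det A=\kappa/(w(r)^{4}r^{2})$, the identification of the $3\times3$ minors as homogeneous solutions, and the asymptotic matching $a^{(1)}\sim z_{2,k}$, $a^{(2)}\sim z_{1,k}$, $a^{(3)}\sim z_{4,k}$, $a^{(4)}\sim z_{3,k}$ consistent with the signs in the stated formula.
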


\begin{proof}
    The proof follows along exactly the same lines as the mode \(k=1\) case (Proposition \ref{mode1repformula}). We omit the details.
\end{proof}

\begin{proof}[Proof of Proposition \ref{fouriermodegeq2prop}]
    It suffices to solve equation (\ref{fmodegeq2system}) where \(\varphi=\big(\psi_{k1}^{1},\psi_{k2}^{1}\big)\) and \(\widetilde{h}=\big(h_{k1}^{1},h_{k2}^{1}\big)\). By Proposition \ref{modegeq2repformula}, a solution of this problem can be constructed through the following formula
    \begin{equation}
    \label{modegeq2partsol}
    \begin{split}
    \begin{pmatrix}[1.3]
        \psi_{k1}^{1}(r)\\
        \psi_{k2}^{1}(r)
    \end{pmatrix}=&\quad\left(\int_{0}^{r} w(s)^{2}s\:\widetilde{h}(s)\cdot z_{2,k}(s)\:ds\right)z_{1,k}(r)\\
    &+\left(\int_{r}^{\infty} w(s)^{2}s\:\widetilde{h}(s)\cdot z_{1,k}(s)\:ds\right)z_{2,k}(r)\\
    &-\left(\int_{r}^{\infty} w(s)^{2}s\:\widetilde{h}(s)\cdot z_{4,k}(s)\:ds\right)z_{3,k}(r)\\
    &-\left(\int_{0}^{r} w(s)^{2}s\:\widetilde{h}(s)\cdot z_{3,k}(s)\:ds\right)z_{4,k}(r).
\end{split}
\end{equation}It is then directly verified, using (\ref{modegeq2kernelasymprop}), that the estimate \begin{equation}
\label{modegeq2partsolest0}
    \abs{\psi_{k1}^{1}(r)}+\abs{\psi_{k2}^{1}(r)}\:\leq\:
    \left\{
    \begin{array}{ll}
    C\norm{h}_{**}& \text{ if }k=2, \\
    Cr\abs{\log r}\:\norm{h}_{**}& \text{ if }k=3,\\
    C_{k}r\norm{h}_{**}& \text{ if }k\geq4,
    \end{array}\right.
\end{equation}holds for \(r\leq 1/2\), where \(h\) is given by (\ref{hmodegeq2v1}). Concerning the behaviour of (\ref{modegeq2partsol}) as \(r\to\infty\), a similar verification gives the estimate
\begin{equation}
\label{modegeq2partsolestinf}
    \abs{\psi_{k1}^{1}(r)}+\abs{\psi_{k2}^{1}(r)}\:\leq\: C_{k}\norm{h}_{**}
\end{equation} for \(r\geq1/2\). The proof of the proposition is complete.
\end{proof}

\begin{proof}[Proof of Corollary \ref{fouriermodegeq2cor}]
   For \(h\) of the form (\ref{hmodegeq2v2}) and \(\phi\) of the form (\ref{phimodegeq2v2}), problem (\ref{linearizedeq}) reduces to the ODE system (\ref{fmodegeq2system}) where \(\varphi=\big(\psi_{k1}^{2},-\psi_{k2}^{2}\big)\) and \(\widetilde{h}=\big(h_{k1}^{2},-h_{k2}^{2}\big)\). The result then follows by applying the representation formula (\ref{modegeq2partsol}) to \((\psi_{k1}^{2},-\psi_{k2}^{2}\big)\) with right-hand side \(\widetilde{h}=\big(h_{k1}^{2},-h_{k2}^{2}\big)\). 
\end{proof}

\begin{remark}
    The proofs above show that, in the Fourier mode \(k\geq2\) case, we actually have a stronger estimate than (\ref{mainestimate}). In particular, we have estimates (\ref{modegeq2partsolest0}), (\ref{modegeq2partsolestinf}) and similar for \(\psi_{k1}^{2}\), \(\psi_{k2}^{2}\).
\end{remark}

\section{Summing the Fourier modes}

Combining the results of the previous sections, we see that the operator \(L\) can be inverted in our weighted \(L^{\infty}\) spaces for right-hand sides \(h\) containing finitely many non-trivial modes in their Fourier expansion. More precisely, if \(m>0\) is an integer and \(h\) decomposes like (\ref{hfourier}) with the orthogonality conditions and \(h_{k}^{1}=h_{k}^{2}=0\) for all \(k\geq m\), then a solution \(\phi\) of (\ref{linearizedeq}) exists with the estimate \(\norm{\phi}_{*}\leq C_{m}\norm{h}_{**}\). To complete the proof of Theorem \ref{mainthm}, we need to establish a uniform estimate for the inverse which is valid after summing all Fourier modes. This is achieved through the following result.

\begin{proposition}
\label{fouriersumprop}
    There exists a constant \(C>0\) such that for any 
\begin{align}
\label{hinfsumgeq3}
    \begin{split}
    h&=iW\left(\sum_{k=3}^{\infty}h_{k}^{1}+\sum_{k=3}^{\infty}h_{k}^{2}\right),\\
    h_{k}^{1}&=h_{k1}^{1}(r)\cos{k\theta}+ih_{k2}^{1}(r)\sin{k\theta},\\
    h_{k}^{2}&=h_{k1}^{2}(r)\sin{k\theta}+ih_{k2}^{2}(r)\cos{k\theta},
    \end{split}
\end{align}
    satisfying \(\norm{h}_{**}<\infty\), equation (\ref{linearizedeq}) has a solution \(\phi=iW\psi\),
    \begin{align}
    \label{psiinfsumgeq3}
    \begin{split}
    \psi&=\sum_{k=3}^{\infty}\psi_{k}^{1}+\sum_{k=3}^{\infty}\psi_{k}^{2},\\
    \psi_{k}^{1}&=\psi_{k1}^{1}(r)\cos{k\theta}+i\psi_{k2}^{1}(r)\sin{k\theta},\\
    \psi_{k}^{2}&=\psi_{k1}^{2}(r)\sin{k\theta}+i\psi_{k2}^{2}(r)\cos{k\theta},
    \end{split}
\end{align}
    
    such that
    \begin{equation}
    \label{infsumest}
        \sup_{\mathbb{R}^{2}}\:\abs{\psi}\leq C\norm{h}_{**}.
    \end{equation}For each \(k\geq 3\), the pair \((\psi_{k1}^{1},\psi_{k2}^{1})\) can be represented through formula (\ref{modegeq2partsol}) with right-hand side \(\widetilde{h}=(h_{k1}^{1},h_{k2}^{1})\). Similarly for \((\psi_{k1}^{2},-\psi_{k2}^{2})\) with right-hand side \(\widetilde{h}=(h_{k1}^{2},-h_{k2}^{2})\).
\end{proposition}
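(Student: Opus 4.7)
The plan is to construct $\psi$ term by term via the representation formula (\ref{modegeq2partsol}) as declared in the statement of the proposition, and then to establish the uniform bound (\ref{infsumest}) by a quantitative refinement of the per-mode estimate of Proposition \ref{fouriermodegeq2prop} that makes the $k$-dependence of the constant explicit. For each $k\geq 3$ and $l\in\{1,2\}$ I apply Proposition \ref{modegeq2repformula} (with the sign convention of Corollary \ref{fouriermodegeq2cor} when $l=2$) to define $(\psi_{k1}^l,\psi_{k2}^l)$, yielding the formal Fourier series (\ref{psiinfsumgeq3}); each partial sum $\psi_N=\sum_{k=3}^N(\psi_k^1+\psi_k^2)$ solves $L[iW\psi_N]=h_N$ by linearity, where $h_N$ is the corresponding truncation of $h$.

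The heart of the proof is then an estimate of the form
\[
\sup_{r>0}\bigl(|\psi_{k1}^l(r)|+|\psi_{k2}^l(r)|\bigr)\;\leq\;\frac{C}{k^{\alpha}}\,\|h\|_{**},\qquad k\geq 3,\ l\in\{1,2\},
\]
for some fixed $\alpha>1$ and a constant $C$ independent of $k$. Such a bound is natural on dimensional grounds: the angular-momentum term $-k^{2}/r^{2}$ in (\ref{fmodegeq2systemhom}) provides coercivity of strength $k^{2}$, which one expects to translate into a matching improvement for the inverse. Granted this mode-by-mode estimate, summation over $k$ yields
\[
|\psi(r,\theta)|\;\leq\;\sum_{k=3}^{\infty}\bigl(|\psi_{k}^{1}|+|\psi_{k}^{2}|\bigr)\;\leq\;C\|h\|_{**}\sum_{k=3}^{\infty}k^{-\alpha},
\]
which is (\ref{infsumest}). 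To prove the quantitative estimate, I would normalise each $z_{j,k}$ so that the leading coefficients recorded in Lemmas \ref{modegeq2kernelat0}--\ref{modegeq2kernelatinf} take exactly the values stated there, and then estimate the four integral contributions of (\ref{modegeq2partsol}) in paired fashion --- grouping the $z_{1,k}/z_{2,k}$ terms together and the $z_{3,k}/z_{4,k}$ terms together --- exactly as in the $k=1$ analysis around (\ref{mode1partsol}). The $C^2_{\mathrm{loc}}$ convergence of $\psi_{N}\to\psi$, needed to verify that $\phi=iW\psi$ actually solves (\ref{linearizedeq}), then follows from interior Schauder estimates applied to the partial sums, along with the bound just derived.

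The principal obstacle is preserving the delicate cancellations of (\ref{modegeq2partsol}) while keeping track of everything in $k$: each of the four terms in that formula contains contributions that individually grow polynomially (or exponentially through $z_{3,k}$) in $r$, and boundedness of $(\psi_{k1}^l,\psi_{k2}^l)$ only emerges after paired subtraction. The factors of $k$ present in the leading coefficients of Lemma \ref{modegeq2kernelatinf} must be carried through explicitly, so that the dominant $r^{\pm k}$ (or $r^{\pm(k\pm 2)}$) pieces cancel and the remainders admit the claimed $k^{-\alpha}$ decay. If this direct tracking proves unwieldy, a fall-back is a contradiction/blow-up argument: given hypothetical sequences with $\|h^{(n)}\|_{**}\to 0$ and $\sup|\psi^{(n)}|=1$, the fast algebraic decay of the mode-$k$ kernels at the endpoints (Proposition \ref{modegeq2kernel}) rules out concentration at $r=0$ and $r=\infty$, so a local limit $\psi_\infty$ produces a bounded solution of the homogeneous problem with Fourier expansion entirely in modes $k\geq 3$, hence identically zero by Lemma \ref{fmodegeq2nondegeneracy}, contradicting the normalisation.
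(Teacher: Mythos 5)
Your primary route has a genuine gap: the mode-by-mode estimate \(\sup(|\psi_{k1}^{l}|+|\psi_{k2}^{l}|)\leq Ck^{-\alpha}\|h\|_{**}\) with \(\alpha>1\) is not justified, and the model computation suggests it is false. Near the origin one has only \(|\widetilde{h}_{k}(s)|\leq C\|h\|_{**}/s\) per mode, and the first term of (\ref{modegeq2partsol}) then gives
\begin{equation*}
\Big|z_{1,k}(r)\int_{0}^{r}w(s)^{2}s\,\widetilde{h}(s)\cdot z_{2,k}(s)\,ds\Big|\;\sim\;r^{-k}\int_{0}^{r}s^{k}\,ds\;\|h\|_{**}\;=\;\frac{r}{k+1}\,\|h\|_{**},
\end{equation*}
i.e.\ a gain of only one power of \(k\), so \(\sum_{k}C_{k}\) diverges and direct summation of sup bounds against the uniform coefficient bound fails. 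Worse, the constants \(c=c(k)\) in Proposition \ref{modegeq2kernel} connecting the normalisation of \(z_{j,k}\) at \(r=0\) to that at \(r=\infty\) are produced by soft matching arguments (sign considerations plus the classification in Lemmas \ref{modegeq2kernelat0}--\ref{modegeq2kernelatinf}) and nothing in the paper, or in your outline, controls their growth in \(k\); without uniform control of these connection coefficients the ``paired cancellation'' bookkeeping cannot even be started. This is precisely why the paper does not sum per-mode estimates: it proves the uniform bound by a compactness argument on the Dirichlet problems in \(B_{R}\) (Proposition \ref{Lestboundeddom}) and only afterwards identifies the Fourier modes of the limit with the representation formula (\ref{modegeq2partsol}) by uniqueness.

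Your fall-back is the right idea and is essentially the paper's argument, but as sketched it omits its two load-bearing ingredients. First, ruling out escape of mass to infinity is not a consequence of ``fast algebraic decay of the mode-\(k\) kernels'' (which would again run into the uncontrolled \(k\)-dependence); the paper instead proves all-modes-at-once barrier estimates on annuli for the coupled system (\ref{linearizedpsieq1})--(\ref{linearizedpsieq2}) (Lemmas \ref{psi1lemma}--\ref{psi2lemmaext}, assembled in Proposition \ref{Lpsiouterest}), treating the \(\partial_{\theta}\)-coupling perturbatively for large \(\rho\). Second, ruling out concentration at the origin requires a rescaling \(\widetilde{\psi}^{m}(x)=\psi^{m}(|x_{m}|x)\) leading to the limit equation (\ref{psitildelimiteq}), whose mode-\(k\) solutions \(r^{\pm k},r^{k-2},r^{-k-2}\) are all unbounded on \((0,\infty)\) for \(k\geq3\); Lemma \ref{fmodegeq2nondegeneracy} alone does not cover the rescaled limit. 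You also need to say what objects the contradiction argument runs on: a general \(h\) with \(\|h\|_{**}<\infty\) does not have Fourier tails small in sup norm, so the truncations \(h_{N}\) do not converge to \(h\) in \(\|\cdot\|_{**}\) and the series for \(\psi\) cannot be summed by a Cauchy argument; the paper sidesteps this by extracting a locally uniform limit of the \(B_{R}\) Dirichlet solutions rather than summing the series at all.
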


Note that the above result concerns functions supported in Fourier modes \(k\geq3\): the statement of Theorem \ref{mainthm} then follows by combining with Proposition \ref{fouriermode0prop} in the mode \(k=0\) case, Proposition \ref{fouriermode1prop} and Corollary \ref{fouriermode1cor} in the mode \(k=1\) case, and Proposition \ref{fouriermodegeq2prop} and Corollary \ref{fouriermodegeq2cor} in the mode \(k=2\) case.

To prove Proposition \ref{fouriersumprop}, we will need some maximum principle type estimates for the operator \(L\) on bounded domains. Similar estimates have appeared before in the work of Pacard and Rivi\`{e}re \cite{pacardriviere2000}*{Section 4}, nevertheless we repeat the arguments here for the reader's convenience. Recalling equation (\ref{linearizedeqpsi}), we note that the problem \(L[\phi]=h\) can be written in terms of \(h=iW\big(h_{1}+ih_{2}\big)\) and \(\phi=iW\big(\psi_{1}+i\psi_{2}\big)\) as the coupled system
\begin{align}
    \Delta\psi_{1}+2\frac{w'}{w}\partial_{r}\psi_{1}-\frac{2}{r^{2}}\partial_{\theta}\psi_{2}&=h_{1},\label{linearizedpsieq1}\\
    \Delta\psi_{2}+2\frac{w'}{w}\partial_{r}\psi_{2}+\frac{2}{r^{2}}\partial_{\theta}\psi_{1}-2w^{2}\psi_{2}&=h_{2}.\label{linearizedpsieq2}
\end{align}We intend to estimate \(\psi_{1}\) and \(\psi_{2}\) through a separate analysis of the two equations above, by first neglecting the \(\partial_{\theta}\) terms. Concerning (\ref{linearizedpsieq1}), we have the following lemma.

\begin{lemma}
\label{psi1lemma}
    Let \(0<\rho<R\), and let \(g_{1}\), \(h_{1}\) be bounded functions such that \(h_{1}\) contains no \(k=0\) mode in its Fourier expansion. There exists a constant \(C>0\), independent of \(\rho\) and \(R\), such that the unique solution of the problem 
    \begin{align}
        \Delta\psi_{1}+2\frac{w'}{w}\partial_{r}\psi_{1}&=h_{1},\quad\text{in }B_{R}\backslash B_{\rho},\label{psi1inhom}\\
        \psi_{1}&=g_{1},\quad\text{on }\partial B_{\rho},\label{psi1boundarybrho}\\
        \psi_{1}&=0,\quad\text{ on }\partial B_{R},\label{psi1boundarybR}
    \end{align}satisfies the estimate
    \begin{equation}
    \label{psi1lemmaest}
        \sup_{B_{R}\backslash B_{\rho}}\:\abs{\psi_{1}}\leq C\bigg(\sup_{\partial B_{\rho}}\:\abs{g_{1}}+\sup_{B_{R}\backslash B_{\rho}}\:\abs{r^{2}h_{1}}\bigg).
    \end{equation}
\end{lemma}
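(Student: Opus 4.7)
The operator $\mathcal{L}u := \Delta u + 2(w'/w)\partial_r u$ admits the divergence form
\begin{equation*}
\mathcal{L}u = \tfrac{1}{w^{2}r}\partial_r\left(w^{2}r\,\partial_r u\right) + \tfrac{1}{r^{2}}\partial_\theta^{2}u,
\end{equation*}
so it is uniformly elliptic on compact subsets of $\{r>0\}$ with no zeroth-order term. The weak and strong maximum principles therefore apply, and existence/uniqueness of the Dirichlet problem \eqref{psi1inhom}--\eqref{psi1boundarybR} follow from standard elliptic theory. The substance of the lemma is the $(\rho,R)$-independent bound \eqref{psi1lemmaest}.

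My plan is to split $\psi_1 = \psi_1^{(0)}(r) + \tilde\psi_1(r,\theta)$ into the $\theta$-average and its mean-zero part. Since $h_1$ contains no $k=0$ Fourier mode, averaging the PDE over $\theta$ yields $\partial_r(w^{2}r(\psi_1^{(0)})')=0$ with Dirichlet data the $\theta$-average $\bar g_1$ of $g_1$ on $\partial B_\rho$ and $0$ on $\partial B_R$; an explicit integration gives $|\psi_1^{(0)}| \leq |\bar g_1| \leq \sup|g_1|$ uniformly in $\rho,R$. For the mean-zero remainder $\tilde\psi_1$, Fourier-decomposing into modes $k \geq 1$ shows that the angular term $\tfrac{1}{r^{2}}\partial_\theta^{2}$ contributes a coercive $-k^{2}/r^{2}$ to each scalar ODE, so that the constant $\bar\psi_{1,k} := |g_{1,k}| + k^{-2}\sup_s|s^{2}h_{1,k}(s)|$ is a super-solution of the $k$-th mode equation, and the one-dimensional maximum principle gives $|\psi_{1,k}(r)| \leq \bar\psi_{1,k}$. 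The factor $k^{-2}$ here makes the essential role of the mean-zero hypothesis transparent: were a nontrivial $k=0$ mode of $h_1$ present, the zero-mode radial ODE would produce solutions growing like $(\log r)^{2}$ at infinity, ruining any uniform-in-$R$ bound.

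The main obstacle is upgrading these mode-by-mode bounds to a global $L^{\infty}$ bound, since Fourier coefficients of $L^{\infty}$ functions need not be summable. I would therefore argue by contradiction and compactness. Suppose \eqref{psi1lemmaest} fails along sequences $(\rho_n,R_n,g_{1,n},h_{1,n},\psi_{1,n})$ with $\sup|\psi_{1,n}|=1$ and $\sup|g_{1,n}|+\sup|r^{2}h_{1,n}|\to 0$. Because the boundary data is small for large $n$, the maximum of $|\psi_{1,n}|$ is achieved at an interior point $(r_n,\theta_n)$; standard interior elliptic regularity combined with a rescaling around this point produces a subsequence converging in $C^{2}_{\mathrm{loc}}$ to a nontrivial bounded $\psi_1^{*}$ solving $\mathcal{L}\psi_1^{*}=0$ on one of the four possible limit domains $\Omega_{*}\subseteq\{r>0\}$ (according to whether $\rho_n\to 0$ or $\rho_n\to\rho_{*}>0$, and whether $R_n\to\infty$ or $R_n\to R_{*}<\infty$), with $\psi_1^{*}$ vanishing on each finite boundary component. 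In every case the maximum principle---applied after using a removable-singularity argument at $r=0$ justified by the weighted divergence structure (near zero $w^{2}r\sim \alpha^{2}r^{3}$, so the radial part behaves like the four-dimensional Laplacian, across which bounded solutions extend)---forces $\psi_1^{*}\equiv 0$, contradicting the normalization and yielding the desired uniform estimate.
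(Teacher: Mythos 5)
Your opening moves coincide with the paper's proof: the divergence form $\tfrac{1}{w^{2}r}\partial_r(w^{2}r\,\partial_r u)+\tfrac{1}{r^{2}}\partial_\theta^{2}u$, the explicit integration of the zero mode, and the mode-by-mode barrier giving $|\hat\psi_{1k}|\leq k^{-2}\sup_s|s^{2}\hat h_{1k}(s)|$ are exactly the right ingredients. But you then misdiagnose an obstacle and take a detour that does not close. The summability concern is illusory for the inhomogeneous part: since $|\hat h_{1k}(r)|\leq\sup_\theta|h_1(r,\theta)|$ uniformly in $k$, the barrier bound gives $\sum_{|k|\geq1}|\hat\psi_{1k}(r)|\leq\big(\sum_{|k|\geq1}k^{-2}\big)\sup|r^{2}h_1|$, which converges. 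The only genuinely non-summable series is $\sum_k|g_{1,k}|$ coming from the boundary data, and that term should not be estimated mode by mode at all: split $\psi_1$ into the solution of the inhomogeneous problem with zero boundary values (whose modes sum as above) plus the solution of the homogeneous problem with data $g_1$, and bound the latter by $\sup|g_1|$ in one line via the two-dimensional maximum principle, since $\mathcal{L}$ has no zeroth-order term. This is the paper's argument, and it makes the compactness step unnecessary.

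The compactness argument you substitute has, as written, a real gap. In the limit configurations where the domain loses all finite boundary components --- $\rho_n\to0$ together with $R_n\to\infty$, or when the maximum point $r_n$ escapes to $0$ or $\infty$ so that the rescaled limit equation becomes $\Delta u=0$ (or the model operator $u''+\tfrac{3}{r}u'+\tfrac{1}{r^2}\partial_\theta^2u$) on $\mathbb{R}^{2}\setminus\{0\}$ --- nonzero constants are bounded solutions of the homogeneous limit equation, and there is no boundary on which $\psi_1^{*}$ is known to vanish. The assertion that ``in every case the maximum principle forces $\psi_1^{*}\equiv0$'' therefore fails precisely in those cases. The omission is repairable with information you already have: the zero mode of $\psi_{1,n}$ is bounded by $|\bar g_{1,n}|\to0$ by your explicit integration (here the hypothesis that $h_1$ has no $k=0$ mode is used), so the limit is mean-zero in $\theta$ and constants are excluded. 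But you would need to say this, and carry the averaged/mean-zero decomposition into the blow-up argument; and once you do, the direct summation already finishes the proof without any contradiction argument.
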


\begin{proof}
    First suppose that \(g_{1}=0\). Writing \(\psi_{1}\) and \(h_{1}\) in terms of their Fourier series \(\psi_{1}=\sum_{\abs{k}\geq1}\hat{\psi}_{1k}(r)e^{ik\theta}\) and \(h_{1}=\sum_{\abs{k}\geq1}\hat{h}_{1k}(r)e^{ik\theta}\), we see that problem (\ref{psi1inhom})-(\ref{psi1boundarybR}) reduces to the ODEs
    \begin{equation*}
        \hat{\psi}_{1k}''+\left(2\frac{w'}{w}+\frac{1}{r}\right)\hat{\psi}_{1k}'-\frac{k^{2}}{r^{2}}\hat{\psi}_{1k}=\hat{h}_{1k}
    \end{equation*}for \(\abs{k}\geq1\), with boundary conditions \(\hat{\psi}_{1k}(\rho)=\hat{\psi}_{1k}(R)=0\). A barrier argument for this equation gives the estimate \(\abs{\hat{\psi}_{1k}(r)}\leq Ck^{-2}\sup_{\rho<r<R}\:\abs{r^{2}\hat{h}_{1k}}\leq Ck^{-2}\sup_{\rho<r<R}\:\abs{r^{2}h_{1}}\), and summation over \(k\) then implies (\ref{psi1lemmaest}) with \(g_{1}=0\). In the general case, we can write \(\psi_{1}\) as a solution of the inhomogeneous problem (\ref{psi1inhom}) with zero boundary values plus a solution of the homogeneous problem with boundary conditions (\ref{psi1boundarybrho})-(\ref{psi1boundarybR}). The solution of the homogeneous problem is bounded by \(\sup_{\partial B_{\rho}}\:\abs{g_{1}}\) by the maximum principle. This completes the proof of the lemma.
\end{proof}

In a similar spirit, we have the following result for (\ref{linearizedpsieq2}) after dropping the \(\partial_{\theta}\psi_{1}\) term.

\begin{lemma}
\label{psi2lemma}
    Let \(\rho>0\) be sufficiently large, let \(R>\rho\), and let \(g_{2}\), \(h_{2}\) be bounded functions. There exists a constant \(C>0\), independent of \(\rho\) and \(R\), such that the unique solution of the problem 
    \begin{align}
        \Delta\psi_{2}+2\frac{w'}{w}\partial_{r}\psi_{2}-2w^{2}\psi_{2}&=h_{2},\quad\text{in }B_{R}\backslash B_{\rho},\label{psi2inhom}\\
        \psi_{2}&=g_{2},\quad\text{on }\partial B_{\rho},\label{psi2boundarybrho}\\
        \psi_{2}&=0,\quad\text{ on }\partial B_{R},\label{psi2boundarybR}
    \end{align}satisfies the estimate
    \begin{equation}
    \label{psi2lemmaest}
        \sup_{B_{R}\backslash B_{\rho}}\:\abs{\psi_{2}}\leq C\bigg(\sup_{\partial B_{\rho}}\:\abs{g_{2}}+\sup_{B_{R}\backslash B_{\rho}}\:\abs{h_{2}}\bigg).
    \end{equation}
\end{lemma}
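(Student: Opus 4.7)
The plan is to prove Lemma \ref{psi2lemma} via a direct maximum principle argument, exploiting the fact that the zeroth-order coefficient $-2w^{2}$ is bounded away from zero once $\rho$ is chosen large enough. Since $w(r)\to1$ as $r\to\infty$, we can fix $\rho$ so large that $2w(r)^{2}\geq c_{0}$ for some fixed $c_{0}>0$ throughout $B_{R}\setminus B_{\rho}$. Writing the operator as $\mathcal{L}[\psi_{2}]:=\Delta\psi_{2}+2(w'/w)\partial_{r}\psi_{2}-2w^{2}\psi_{2}$, the coefficient in front of $\psi_{2}$ is strictly negative, so $\mathcal{L}$ satisfies a maximum principle on $B_{R}\setminus B_{\rho}$: if $\mathcal{L}[u]\geq 0$ in the annulus and $u\leq 0$ on its boundary, then $u\leq 0$ inside. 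Existence and uniqueness of $\psi_{2}$ in (\ref{psi2inhom})--(\ref{psi2boundarybR}) then follow from standard linear elliptic theory, since coercivity of $\mathcal{L}$ (and hence the Lax--Milgram / Fredholm framework) is immediate from the sign of the potential.

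For the quantitative estimate (\ref{psi2lemmaest}), I would introduce the constant barrier
\begin{equation*}
\bar{\psi}:=\sup_{\partial B_{\rho}}|g_{2}|+\frac{1}{c_{0}}\sup_{B_{R}\setminus B_{\rho}}|h_{2}|.
\end{equation*}
A direct computation yields $\mathcal{L}[\bar{\psi}]=-2w^{2}\bar{\psi}\leq -c_{0}\bar{\psi}\leq -\sup|h_{2}|\leq h_{2}=\mathcal{L}[\psi_{2}]$, so $\mathcal{L}[\psi_{2}-\bar{\psi}]\geq 0$ in $B_{R}\setminus B_{\rho}$. On the inner boundary $\partial B_{\rho}$ we have $\psi_{2}-\bar{\psi}=g_{2}-\bar{\psi}\leq 0$, while on $\partial B_{R}$ we have $\psi_{2}-\bar{\psi}=-\bar{\psi}\leq 0$. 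The maximum principle then gives $\psi_{2}\leq\bar{\psi}$. Applying the same argument to $-\psi_{2}$ (with right-hand side $-h_{2}$ and boundary data $-g_{2}$) yields $-\psi_{2}\leq\bar{\psi}$, hence $|\psi_{2}|\leq\bar{\psi}$ throughout the annulus. This is precisely (\ref{psi2lemmaest}) with $C=\max(1,1/c_{0})$.

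I do not anticipate a genuine obstacle here: the only delicate point is the requirement that $\rho$ be sufficiently large, and this is explicitly included in the hypothesis. Note that this is a key structural difference compared with Lemma \ref{psi1lemma}, where the absence of a zeroth-order term forces one to handle the decay mode-by-mode in a Fourier expansion; here the exponential decay in the radial direction (reflecting the symbol $\Delta-2$) makes the constant barrier argument effective without any Fourier decomposition, and in particular without needing the $k=0$ mode to be excluded.
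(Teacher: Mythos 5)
Your proof is correct and is essentially identical to the paper's: the paper likewise chooses \(\rho\) large enough that \(2w(r)^{2}\geq1\) on the annulus and invokes ``a suitable constant as a barrier,'' which is exactly the comparison function \(\bar{\psi}\) you construct. Your version merely spells out the barrier computation and the two-sided application of the maximum principle that the paper leaves implicit.
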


\begin{proof}
    Since \(w(r)\to1\) as \(r\to\infty\), we can choose \(\rho>0\) large enough that \(2w(r)^{2}\geq1\) for all \(r\geq\rho\). Estimate (\ref{psi2lemmaest}) then follows using a suitable constant as a barrier.
\end{proof}

Using the function \(Cr^{-2}\) instead as a barrier, for a suitable constant \(C\), we obtain the following extension of the above result.

\begin{lemma}
\label{psi2lemmaext}
 Under the assumptions of Lemma \ref{psi2lemma}, the unique solution of (\ref{psi2inhom})-(\ref{psi2boundarybR}) with \(g_{2}=0\) satisfies the estimate
 \begin{equation}
     \sup_{B_{R}\backslash B_{\rho}}\:\abs{r^{2}\psi_{2}}\leq C\sup_{B_{R}\backslash B_{\rho}}\:\abs{r^{2}h_{2}},
 \end{equation}with \(C>0\) independent of \(\rho\) and \(R\).
\end{lemma}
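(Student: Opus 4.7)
The plan is to adapt the barrier argument from Lemma \ref{psi2lemma}, replacing the constant barrier with the faster-decaying one $\overline\psi(r) := C M r^{-2}$, where $M := \sup_{B_R \setminus B_\rho} \abs{r^2 h_2}$ and $C$ is an absolute constant to be chosen. This should match the $r^{-2}$ improvement on the right-hand side with an $r^{-2}$ improvement on $\psi_2$.

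The first step would be a direct computation of the action of the operator $\mathcal{L}[u] := \Delta u + 2 \frac{w'}{w} \partial_r u - 2 w^2 u$ on $r^{-2}$. Since $\Delta r^{-2} = 4 r^{-4}$, one finds
$$\mathcal{L}[r^{-2}] = 4 r^{-4} - \frac{4 w'(r)}{w(r)} r^{-3} - 2 w(r)^{2} r^{-2}.$$
By the asymptotics in (\ref{wproperties}) ($w \to 1$ and $w'/w = O(r^{-3})$ as $r \to \infty$), the first two terms on the right are of order $r^{-4}$ and $r^{-6}$ respectively, while the last tends to $-2 r^{-2}$. Taking $\rho$ large enough (independently of $R$) therefore yields $\mathcal{L}[r^{-2}] \leq - r^{-2}$ on $B_R \setminus B_\rho$, and hence $\mathcal{L}[\overline\psi] \leq - C M r^{-2}$.

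Next I would set up the comparison via $v_\pm := \overline\psi \mp \psi_2$. Using $\abs{h_2} \leq M r^{-2}$ gives $\mathcal{L}[v_\pm] = \mathcal{L}[\overline\psi] \mp h_2 \leq -(C-1) M r^{-2} \leq 0$ as soon as $C \geq 2$. On the boundaries $\partial B_\rho$ and $\partial B_R$ we have $v_\pm = \overline\psi > 0$, since $\psi_2 \equiv 0$ there by the hypothesis $g_2 = 0$. The zeroth order coefficient $-2 w^2$ of $\mathcal{L}$ is nonpositive, so the classical maximum principle applies: the inequality $\mathcal{L}[v_\pm] \leq 0$ combined with nonnegative boundary values forces $v_\pm \geq 0$ in $B_R \setminus B_\rho$, and consequently $\abs{\psi_2} \leq \overline\psi = C M r^{-2}$, which is exactly the claimed estimate.

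No step looks particularly delicate. The only thing to verify carefully is that the threshold on $\rho$ depends only on the asymptotic behaviour of $w$ (not on $R$) and that $C$ is a pure number; both follow by inspection from the computation above. The entire argument is essentially a one-line computation of $\mathcal{L}[r^{-2}]$ followed by a standard comparison, exactly in the spirit of Lemma \ref{psi2lemma}.
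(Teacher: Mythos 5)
Your argument is correct and is exactly the paper's proof: the paper disposes of this lemma in one sentence by using \(Cr^{-2}\) as a barrier in place of the constant from Lemma \ref{psi2lemma}, which is precisely the computation and comparison you carry out. The verification that \(\mathcal{L}[r^{-2}]\leq -r^{-2}\) for \(r\geq\rho\) with \(\rho\) depending only on \(w\) is accurate (\(\Delta r^{-2}=4r^{-4}\) in \(\mathbb{R}^{2}\)), and the sign condition on the zeroth-order coefficient justifies the maximum principle as you state.
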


The preceding lemmata can now be combined to prove the following estimate.

\begin{proposition}
\label{Lpsiouterest}
    There exist constants \(C\), \(\rho_{0}>0\) such that, for any \(\rho_{0}\leq\rho<R\) and any solution \(\psi=\psi_{1}+i\psi_{2}\) of (\ref{linearizedpsieq1})-(\ref{linearizedpsieq2}) in \(B_{R}\backslash B_{\rho}\) satisfying \(\psi_{1}=\psi_{2}=0\) on \(\partial B_{R}\) and containing no \(k=0\) mode in its Fourier expansion, we have the estimate
    \begin{equation}
    \label{psioutest}
        \sup_{B_{R}\backslash B_{\rho}}\abs{\psi}\leq C\bigg(\sup_{\partial B_{\rho}}\:\abs{\psi}+\sup_{B_{R}\backslash B_{\rho}}\abs{r^{2}h_{1}}+\sup_{B_{R}\backslash B_{\rho}}\abs{h_{2}}\bigg).
    \end{equation}
\end{proposition}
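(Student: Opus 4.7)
The strategy is to iterate Lemmas \ref{psi1lemma}--\ref{psi2lemmaext} by treating the off-diagonal coupling terms $\mp(2/r^{2})\partial_{\theta}\psi_{j}$ in (\ref{linearizedpsieq1})--(\ref{linearizedpsieq2}) as inhomogeneities. Two structural features make this work: the weighted nature of Lemma \ref{psi1lemma} (which estimates $\psi_{1}$ via $r^{2}h_{1}$, gaining a factor of $r^{2}$), and the smallness of the coupling coefficient $2/r^{2}\le 2/\rho_{0}^{2}$ entering the $\psi_{2}$-equation for $\rho_{0}$ large.

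The first step is to apply Lemma \ref{psi1lemma} to $\psi_{1}$ with source $h_{1}+(2/r^{2})\partial_{\theta}\psi_{2}$, which gives
$$\sup_{B_{R}\setminus B_{\rho}}|\psi_{1}|\le C\Bigl(\sup_{\partial B_{\rho}}|\psi_{1}|+\sup|r^{2}h_{1}|+2\sup|\partial_{\theta}\psi_{2}|\Bigr).$$
The assumption that $\psi$ has no $k=0$ Fourier mode is essential here, since the barrier argument underlying Lemma \ref{psi1lemma} relies on the $-(k^{2}/r^{2})$ coefficient present in every mode $k\ne 0$. In parallel, applying Lemma \ref{psi2lemma} to $\psi_{2}$ with source $h_{2}-(2/r^{2})\partial_{\theta}\psi_{1}$ and using $r\ge\rho\ge\rho_{0}$ gives
$$\sup_{B_{R}\setminus B_{\rho}}|\psi_{2}|\le C\Bigl(\sup_{\partial B_{\rho}}|\psi_{2}|+\sup|h_{2}|+\frac{2}{\rho^{2}}\sup|\partial_{\theta}\psi_{1}|\Bigr).$$

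The main obstacle is to control the terms $\sup|\partial_{\theta}\psi_{j}|$ on the right-hand sides of these two inequalities. I plan to do this by decomposing $\psi$ into its Fourier series in $\theta$, so that $\partial_{\theta}$ becomes multiplication by $ik$ in each mode. The mode-by-mode barrier argument underlying Lemma \ref{psi1lemma} produces a factor of $k^{-2}$ that exactly offsets the factor of $|k|$ coming from $\partial_{\theta}\psi_{1}$; after summation in $k$ (using $\sum k^{-2}<\infty$ together with $|\hat h_{jk}|\le\|h_{j}\|_{\infty}$), one arrives at a bound of the form $\sup|\partial_{\theta}\psi_{1}|\le C\bigl(\sup_{\partial B_{\rho}}|\psi|+\sup|r^{2}h_{1}|+\sup|\psi_{2}|\bigr)$. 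An analogous bound for $\sup|\partial_{\theta}\psi_{2}|$ is obtained by combining the same Fourier argument with the weighted estimate of Lemma \ref{psi2lemmaext}, whose $r^{-2}$ decay supplies the missing summability.

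Finally, inserting these estimates into the two displayed inequalities, the factor $2/\rho^{2}$ in front of $\sup|\partial_{\theta}\psi_{1}|$ provides the smallness needed to close the coupled system for $\rho_{0}$ large, and (\ref{psioutest}) follows after absorbing terms. The most delicate part of the argument is the Fourier summation step, where divergent tails $\sum|k|^{-1}$ produced by naive estimates must be replaced by convergent sums $\sum k^{-2}$ through the combined use of Lemma \ref{psi1lemma} (weighted in $\psi_{1}$) and Lemma \ref{psi2lemmaext} (weighted in $\psi_{2}$); without both weights the coupling cannot be closed in $L^{\infty}$.
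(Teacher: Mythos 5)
Your overall architecture (treat the off-diagonal terms as sources, use the weighted Lemma \ref{psi1lemma} for $\psi_{1}$ and Lemmas \ref{psi2lemma}--\ref{psi2lemmaext} for $\psi_{2}$, and close the loop for $\rho_{0}$ large by absorbing a small coupling term) is exactly the paper's, but the step where you control $\sup\abs{\partial_{\theta}\psi_{j}}$ by Fourier summation has a genuine gap. In mode $k$ the barrier gives $\abs{\hat{\psi}_{1k}}\leq Ck^{-2}\sup\abs{r^{2}(\hat{h}_{1k}+2ikr^{-2}\hat{\psi}_{2k})}$, so $\abs{k}\abs{\hat{\psi}_{1k}}\leq Ck^{-1}\sup\abs{r^{2}\hat{h}_{1k}}+C\sup\abs{\hat{\psi}_{2k}}$: the factor $k^{-2}$ does \emph{not} offset the factor $\abs{k}$ (it leaves $k^{-1}$, not $k^{-2}$), and the coupling contribution carries no decay in $k$ whatsoever. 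Since $h_{1}$ and $\psi_{2}$ are merely bounded, their Fourier coefficients need not decay in $k$, so both $\sum_{k}k^{-1}\sup\abs{r^{2}\hat{h}_{1k}}$ and $\sum_{k}\sup\abs{\hat{\psi}_{2k}}$ can diverge. The radial weights in Lemmas \ref{psi1lemma} and \ref{psi2lemmaext} supply decay in $r$, not in $k$, so invoking them does not rescue the summation; as written, the quantity $\sup\abs{\partial_{\theta}\psi_{j}}$ simply cannot be bounded by the data in $L^{\infty}$.

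The paper avoids estimating $\partial_{\theta}\psi_{j}$ altogether. Because $\partial_{\theta}$ commutes with the radial operator $\Delta+2(w'/w)\partial_{r}$ and with the boundary conditions, the contribution of the coupling term $(2/r^{2})\partial_{\theta}\psi_{2}$ to $\psi_{1}$ can be written as $\partial_{\theta}v_{1}$, where $v_{1}$ solves (\ref{psi1inhom})--(\ref{psi1boundarybR}) with the \emph{underived} source $(2/r^{2})\psi_{2}$ and zero boundary data; Lemma \ref{psi1lemma} then gives $\abs{v_{1}}\leq C\sup\abs{\psi_{2}}$, and scaling-invariant interior gradient estimates give $\abs{\partial_{\theta}v_{1}}\leq r\abs{\nabla v_{1}}\leq C\sup\abs{\psi_{2}}$ --- no factor of $k$ is ever lost and no Fourier summation is needed. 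The same device with Lemma \ref{psi2lemmaext} gives $\abs{v_{2}}\leq Cr^{-2}\sup\abs{\psi_{1}}$, hence $\abs{\partial_{\theta}v_{2}}\leq Cr^{-1}\sup\abs{\psi_{1}}\leq C\rho^{-1}\sup\abs{\psi_{1}}$, which is the small term you correctly anticipated for the absorption. If you replace your Fourier-summation step with this decomposition $\psi_{j}=\widetilde{\psi}_{j}+\partial_{\theta}v_{j}$ plus gradient estimates, the rest of your argument goes through.
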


\begin{proof}
    Let \(\psi=\psi_{1}+i\psi_{2}\) be a solution of (\ref{linearizedpsieq1})-(\ref{linearizedpsieq2}) in \(B_{R}\backslash B_{\rho}\) satisfying \(\psi_{1}=\psi_{2}=0\) on \(\partial B_{R}\) and containing no \(k=0\) mode in its Fourier expansion. In view of (\ref{linearizedpsieq1}), we can decompose \(\psi_{1}\) as \(\psi_{1}=\widetilde{\psi}_{1}+\partial_{\theta}v_{1}\) where \(\widetilde{\psi}_{1}\) satisfies (\ref{psi1inhom})-(\ref{psi1boundarybR}) with \(g_{1}=\psi_{1}\), and \(v_{1}\) satisfies (\ref{psi1inhom})-(\ref{psi1boundarybR}) with \(h_{1}=(2\psi_{2})/r^{2}\), \(g_{1}=0\). By Lemma \ref{psi1lemma}, we have the estimates \(\abs{\widetilde{\psi}_{1}}\leq C\big(\sup_{\partial B_{\rho}}\:\abs{\psi_{1}}+\sup_{B_{R}\backslash B_{\rho}}\abs{r^{2}h_{1}}\big)\) and \(\abs{v_{1}}\leq C\sup_{B_{R}\backslash B_{\rho}}\abs{\psi_{2}}\). Moreover, standard elliptic theory implies \(\abs{\nabla v_{1}}\leq Cr^{-1}\sup_{B_{R}\backslash B_{\rho}}\abs{\psi_{2}}\), so \(\abs{\partial_{\theta}v_{1}}\leq C\sup_{B_{R}\backslash B_{\rho}}\abs{\psi_{2}}\). Thus we obtain
    \begin{equation}
    \label{psi1outest}
        \sup_{B_{R}\backslash B_{\rho}}\abs{\psi_{1}}\leq C\bigg(\sup_{\partial B_{\rho}}\:\abs{\psi_{1}}+\sup_{B_{R}\backslash B_{\rho}}\abs{r^{2}h_{1}}+\sup_{B_{R}\backslash B_{\rho}}\abs{\psi_{2}}\bigg).
    \end{equation}In a similar spirit, we can decompose \(\psi_{2}\) as \(\psi_{2}=\widetilde{\psi}_{2}+\partial_{\theta}v_{2}\) where \(\widetilde{\psi}_{2}\) satisfies (\ref{psi2inhom})-(\ref{psi2boundarybR}) with \(g_{2}=\psi_{2}\), and \(v_{2}\) satisfies (\ref{psi2inhom})-(\ref{psi2boundarybR}) with \(h_{2}=-(2\psi_{1})/r^{2}\), \(g_{2}=0\). We deduce from Lemma \ref{psi2lemma} and Lemma \ref{psi2lemmaext} the estimates \(\abs{\widetilde{\psi}_{2}}\leq C\big(\sup_{\partial B_{\rho}}\:\abs{\psi_{2}}+\sup_{B_{R}\backslash B_{\rho}}\abs{h_{2}}\big)\) and \(\abs{v_{2}}\leq Cr^{-2}\sup_{B_{R}\backslash B_{\rho}}\abs{\psi_{1}}\), and standard elliptic theory then implies \(\abs{\partial_{\theta}v_{2}}\leq r\abs{\nabla v_{2}}\leq Cr^{-1}\sup_{B_{R}\backslash B_{\rho}}\abs{\psi_{1}}\). Thus we obtain
    \begin{equation}
    \label{psi2outest}
        \sup_{B_{R}\backslash B_{\rho}}\abs{\psi_{2}}\leq C\bigg(\sup_{\partial B_{\rho}}\:\abs{\psi_{2}}+\sup_{B_{R}\backslash B_{\rho}}\abs{h_{2}}+\rho^{-1}\sup_{B_{R}\backslash B_{\rho}}\abs{\psi_{1}}\bigg).
    \end{equation}Provided \(\rho>0\) is chosen sufficiently large, we can then combine estimates (\ref{psi1outest}) and (\ref{psi2outest}) to deduce (\ref{psioutest}).
\end{proof}

A key consequence of the above result is the following estimate for \(L[\phi]=h\) on bounded domains.

\begin{proposition}
\label{Lestboundeddom}
    Let \(R>1\). There exists a constant \(C>0\), independent of \(R\), such that for any function \(h\) of the form (\ref{hinfsumgeq3}) satisfying \(\norm{h}_{**}<\infty\), the Dirichlet problem
    \begin{equation}
    \label{Ldirproblem}
        \left\{\begin{array}{cl}
    L[\phi]=h,& \text{ in }B_{R}, \\
    \quad\:\phi=0,& \text{ on }\partial B_{R},
    \end{array}\right.
    \end{equation}
    has a unique solution \(\phi=iW\psi\) such that
    \begin{equation}
    \label{Ldirproblemest}
       \sup_{B_{R}}\:\abs{\psi}\leq C\norm{h}_{**}. 
    \end{equation}
\end{proposition}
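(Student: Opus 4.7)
Existence and uniqueness of \(\phi\) in (\ref{Ldirproblem}) will follow from the Fredholm alternative on \(H^{1}_{0}(B_{R})\): the operator \(L\) is a compact (zero-order) perturbation of \(\Delta\) via the Rellich embedding \(H^{1}_{0}\hookrightarrow L^{2}\hookrightarrow H^{-1}\), so it suffices to rule out a nontrivial kernel. If \(\phi\in H^{1}_{0}(B_{R})\) solves \(L[\phi]=0\), pairing against \(\phi\) gives \(B(\phi,\phi)=0\); the zero-extension of \(\phi\) to \(\mathbb{R}^{2}\) lies in \(H\), so by the characterization in \cite{delpinofelmerkowalczyk2004} it must equal \(c_{1}\partial_{x_{1}}W+c_{2}\partial_{x_{2}}W\), and since these functions are not identically zero on \(\mathbb{R}^{2}\setminus B_{R}\) we conclude \(c_{1}=c_{2}=0\). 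Because \(L\) preserves the Fourier decomposition in \(\theta\), when \(h\) is of the form (\ref{hinfsumgeq3}) the unique solution \(\phi\) is automatically supported in modes \(k\geq 3\).

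The uniform bound (\ref{Ldirproblemest}) will be proved by contradiction. Suppose there exist \(R_{n}>1\), admissible \(h_{n}\) with \(\|h_{n}\|_{**}=1\), and \(\phi_{n}=iW\psi_{n}\) solving (\ref{Ldirproblem}) with \(M_{n}:=\sup_{B_{R_{n}}}|\psi_{n}|\to\infty\). Setting \(\widetilde{\psi}_{n}:=\psi_{n}/M_{n}\), \(\widetilde{h}_{n}:=h_{n}/M_{n}\), one has \(\sup|\widetilde{\psi}_{n}|=1\), \(\|\widetilde{h}_{n}\|_{**}\to 0\), and \(\widetilde{\phi}_{n}:=iW\widetilde{\psi}_{n}\) retains modes \(k\geq 3\). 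For \(R_{n}>\rho_{0}\), Proposition \ref{Lpsiouterest} gives
\[
\sup_{B_{R_{n}}\setminus B_{\rho_{0}}}|\widetilde{\psi}_{n}|\leq C\Bigl(\sup_{\partial B_{\rho_{0}}}|\widetilde{\psi}_{n}|+\|\widetilde{h}_{n}\|_{**}\Bigr),
\]
so the contradiction will be closed as soon as I show \(\sup_{B_{\rho_{0}}}|\widetilde{\psi}_{n}|\to 0\).

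To prove this vanishing I use standard interior Schauder estimates for \(L[\widetilde{\phi}_{n}]=iW\widetilde{h}_{n}\) (whose coefficients are smooth and bounded) to extract, along a subsequence, a \(C^{1,\alpha}_{\mathrm{loc}}\) limit \(\widetilde{\phi}_{\infty}\) satisfying \(L[\widetilde{\phi}_{\infty}]=0\) and containing only Fourier modes \(k\geq 3\). If \(R_{n}\to R_{\infty}<\infty\), \(\widetilde{\phi}_{\infty}\) solves the homogeneous Dirichlet problem on \(B_{R_{\infty}}\) and vanishes by the kernel triviality above. If \(R_{n}\to\infty\), applying Proposition \ref{Lpsiouterest} uniformly in \(n\) bounds \(|\widetilde{\psi}_{n}|\) on every fixed annulus, so \(\widetilde{\phi}_{\infty}\) is bounded on \(\mathbb{R}^{2}\); the nondegeneracy result \cite{pacardriviere2000}*{Theorem 3.2} then forces \(\widetilde{\phi}_{\infty}\) into the span of \(iW,\partial_{x_{1}}W,\partial_{x_{2}}W\), which is incompatible with the mode restriction, leaving \(\widetilde{\phi}_{\infty}\equiv 0\). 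Near the origin, the absence of modes \(0,1,2\) and smoothness of \(\widetilde{\phi}_{n}\) yield \(|\widetilde{\phi}_{n}(x)|\leq C|x|^{3}\,\|\widetilde{\phi}_{n}\|_{C^{3}(B_{1})}\), so \(|\widetilde{\psi}_{n}|=O(r^{2})\) with vanishing constant; combined with uniform convergence on \(\{r\geq\varepsilon\}\), this gives \(\sup_{B_{\rho_{0}}}|\widetilde{\psi}_{n}|\to 0\).

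The main obstacle, and the reason a direct estimate does not suffice, is the \(R_{n}\to\infty\) branch of the blow-up: one must simultaneously propagate a uniform exterior bound on \(\widetilde{\psi}_{n}\) and handle the vanishing of \(W\) at the origin in the relation \(\widetilde{\phi}_{n}=iW\widetilde{\psi}_{n}\). The mode restriction \(k\geq 3\) resolves both issues, ruling out the low-mode kernel elements in the limit and forcing \(\widetilde{\phi}_{n}\) to vanish fast enough at the origin that \(\widetilde{\psi}_{n}=\widetilde{\phi}_{n}/(iW)\) remains controlled.
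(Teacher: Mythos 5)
Your overall strategy matches the paper's: existence and uniqueness via the nonnegativity of the quadratic form, then a compactness/contradiction argument in which Proposition \ref{Lpsiouterest} controls the exterior region and the whole difficulty is pushed to a possible concentration of \(\psi_n\) at the origin, where \(W\) vanishes. Where you genuinely diverge is in how that concentration is excluded. The paper rescales around a concentration point \(x_m\to0\), passes to the scale-invariant limit equation \(\Delta\widetilde\psi+\tfrac{2}{r}\partial_r\widetilde\psi+\tfrac{2i}{r^2}\partial_\theta\widetilde\psi=0\) on \(\mathbb{R}^2\setminus\{0\}\), and rules out a nontrivial bounded limit via the Euler-type indicial roots for modes \(k\geq3\). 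You instead extract a quantitative Taylor-type vanishing of \(\widetilde\phi_n\) at the origin directly from the absence of low Fourier modes. Your route is more direct and avoids the blow-up analysis, and it is sound in principle; it also treats the bounded-\(R_n\) branch, which the paper passes over.

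However, the origin step as written contains two errors. First, \(\norm{h}_{**}<\infty\) only gives \(h\in L^{\infty}\), so interior elliptic theory yields \(C^{1,\alpha}\) bounds for \(\widetilde\phi_n\), not the \(C^{3}(B_1)\) control that your inequality \(\abs{\widetilde\phi_n(x)}\leq C\abs{x}^{3}\norm{\widetilde\phi_n}_{C^{3}(B_1)}\) requires. Second, the vanishing order is wrong: \(\widetilde\psi_n\) is supported in modes \(e^{im\theta}\) with \(\abs{m}\geq3\), but \(\widetilde\phi_n=iW\widetilde\psi_n\) with \(W=w(r)e^{i\theta}\) shifts these to \(m+1\), so \(\widetilde\phi_n\) contains the mode \(j=-2\) and in general vanishes only to order \(\abs{x}^{2}\), not \(\abs{x}^{3}\). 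Both defects are repairable with the regularity actually available: the absence of the modes \(j=0,\pm1\) in \(\widetilde\phi_n\) forces \(\widetilde\phi_n(0)=0\) and \(\nabla\widetilde\phi_n(0)=0\), whence \(\abs{\widetilde\phi_n(x)}\leq C\abs{x}^{1+\alpha}\norm{\widetilde\phi_n}_{C^{1,\alpha}(B_1)}\) and, since \(w(r)\geq cr\) near the origin, \(\abs{\widetilde\psi_n(x)}\leq C\abs{x}^{\alpha}\norm{\widetilde\phi_n}_{C^{1,\alpha}(B_1)}\to0\) along the subsequence on which \(\widetilde\phi_n\to0\) in \(C^{1,\alpha}_{\mathrm{loc}}\). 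With this correction your argument closes and yields the same conclusion as the paper's rescaling proof.
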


\begin{proof}
    The existence and uniqueness of solutions to (\ref{Ldirproblem}) follows from the results in \cites{mironescu1995,delpinofelmerkowalczyk2004}. Thus it remains to prove estimate (\ref{Ldirproblemest}). If this estimate does not hold, there exists a sequence \(R_{m}\to\infty\) and functions \(h^{m}=iW\big(h^{m}_{1}+ih^{m}_{2}\big)\) of the form (\ref{hinfsumgeq3}), \(\phi^{m}=iW\psi^{m}=iW\big(\psi_{1}^{m}+i\psi_{2}^{m}\big)\) of the form (\ref{psiinfsumgeq3}), such that 
    \begin{equation*}
        \sup_{B_{R_{m}}}\:\abs{\psi^{m}}=1,\quad\norm{h^{m}}_{**}\to0,\text{ as }m\to\infty,
    \end{equation*}and
    \begin{equation*}
        \left\{\begin{array}{cl}
    L[\phi^{m}]=h^{m},& \text{ in }B_{R_{m}}, \\
    \:\:\phi^{m}=0,& \text{ on }\partial B_{R_{m}},
    \end{array}\right.
    \end{equation*}for each integer \(m\geq1\). An application of standard elliptic estimates shows that the functions \(\phi^{m}\) converge (after passing to a subsequence) uniformly on compact subsets of \(\mathbb{R}^{2}\) to a bounded function \(\phi\) satisfying \(L[\phi]=0\) in \(\mathbb{R}^{2}\). Since \(\psi=(iW)^{-1}\phi\) contains only modes \(k\geq3\) in its Fourier expansion, we conclude from Proposition \ref{modegeq2kernel} that necessarily \(\phi=0\). It follows that \(\psi^{m}\to0\) uniformly on compact subsets of \(\mathbb{R}^{2}\backslash\{0\}\). To obtain an outer estimate, we fix a large constant \(\rho>0\) and note, using Proposition \ref{Lpsiouterest}, that we have the inequality 
    \begin{equation*}
        \sup_{B_{R_{m}}\backslash B_{\rho}}\abs{\psi^{m}}\leq C\bigg(\sup_{\partial B_{\rho}}\:\abs{\psi^{m}}+\sup_{B_{R_{m}}\backslash B_{\rho}}\abs{r^{2}h_{1}^{m}}+\sup_{B_{R_{m}}\backslash B_{\rho}}\abs{h_{2}^{m}}\bigg),
    \end{equation*}for all sufficiently large \(m\). Since \(\norm{h^{m}}_{**}\to0\) and \(\psi^{m}\to0\) uniformly on \(\partial B_{\rho}\), we conclude that 
    \begin{equation*}
    \sup_{B_{R_{m}}\backslash B_{\rho}}\abs{\psi^{m}}\to0,\text{ as }m\to\infty.
    \end{equation*}
    
    Combining the observations made so far, we see that the mass of the functions \(\psi^{m}\) must concentrate at the origin as \(m\to\infty\). More precisely, there exists a sequence \(x_{m}\in B_{R_{m}}\) with \(x_{m}\to0\) as \(m\to\infty\) such that \(\abs{\psi^{m}(x_{m})}\geq1/2\) for all \(m\). We now define, for each integer \(m\geq1\), the rescaled function
    \begin{equation*}
        \widetilde{\psi}^{m}(x):=\psi^{m}(\abs{x_{m}}x),\quad x\in B_{R_{m}/\abs{x_{m}}}.
    \end{equation*}After passing to a subsequence, the functions \(\widetilde{\psi}^{m}\) converge uniformly on compact subsets of \(\mathbb{R}^{2}\backslash\{0\}\) to a bounded function \(\widetilde{\psi}\) satisfying
    \begin{equation}
    \label{psitildelimiteq}
        \Delta \widetilde{\psi}+\frac{2}{r}\partial_{r}\widetilde{\psi}+\frac{2i}{r^{2}}\partial_{\theta}\widetilde{\psi}=0,\quad\text{in }\mathbb{R}^{2}\backslash\{0\}.
    \end{equation}Moreover, the limit function \(\widetilde{\psi}\) is not identically zero and admits an expansion of the form (\ref{psiinfsumgeq3}). Decomposing problem (\ref{psitildelimiteq}) in Fourier modes leads to the ODEs (\ref{fmodegeq2systemhomrto0}), which have no bounded non-trivial solutions for \(k\geq3\). This contradicts the fact that \(\widetilde{\psi}\) is non-trivial. The proof of the proposition is thus complete.
    \end{proof}

    We can now complete the proof of the main result of this section.

    \begin{proof}[Proof of Proposition \ref{fouriersumprop}]
        Given \(h\) of the form (\ref{hinfsumgeq3}) satisfying \(\norm{h}_{**}<\infty\), let \(\phi_{R}=iW\psi_{R}\) denote the unique solution of (\ref{Ldirproblem}) on \(B_{R}\) with right-hand side \(h\). Using (\ref{Ldirproblemest}) and standard elliptic estimates, we can extract a subsequence \(R_{m}\to\infty\) such that \(\phi_{R_{m}}\to\phi\) in \(L^{\infty}_{loc}(\mathbb{R}^{2})\), where \(\phi\) is a bounded solution of \(L[\phi]=h\) in \(\mathbb{R}^{2}\). Moreover, \(\phi=iW\psi\) satisfies estimate (\ref{infsumest}) and admits an expansion of the form (\ref{psiinfsumgeq3}). For each \(k\geq3\), the Fourier modes of \(\phi\) are related to those of \(h\) through the equations \(L[iW\psi_{k}^{1}]=iWh_{k}^{1}\) and \(L[iW\psi_{k}^{2}]=iWh_{k}^{2}\). By the uniqueness of bounded solutions to these problems, we conclude that \(\psi_{k}^{1}\) and \(\psi_{k}^{2}\) must agree with the functions constructed in Proposition \ref{fouriermodegeq2prop} and Corollary \ref{fouriermodegeq2cor}. Thus the pair \((\psi_{k1}^{1},\psi_{k2}^{1})\) can be represented through formula (\ref{modegeq2partsol}) with right-hand side \(\widetilde{h}=(h_{k1}^{1},h_{k2}^{1})\), and similarly for \((\psi_{k1}^{2},-\psi_{k2}^{2})\).
    \end{proof}

\subsection*{Acknowledgments}
	M.~del Pino has been supported by the Royal Society Research Professorship grant RP-R1-180114 and by  the  ERC/UKRI Horizon Europe grant  ASYMEVOL, EP/Z000394/1. R.~Juneman has been supported by a University Research Studentship at the University of Bath.

\bibliography{SolvabilityGL}

\end{document}